\numberwithin{equation}{section}
\newtheorem{theorem}{Theorem}[section]
\newtheorem{proposition}[theorem]{Proposition}
\newtheorem{lemma}[theorem]{Lemma}
\newtheorem{corollary}[theorem]{Corollary}
\theoremstyle{definition}
\newtheorem{definition}[theorem]{Definition}
\newtheorem{remark}[theorem]{Remark}
\newtheorem{example}[theorem]{Example}
\newcommand{\bA}{\mathbf{A}}
\newcommand{\be}{\mathbf{e}}
\newcommand{\cahnhofman}{\mathrm{CH}}
\newcommand{\cB}{\mathcal{B}}
\newcommand{\cH}{\mathcal{H}}
\newcommand{\cK}{\mathcal{K}}
\newcommand{\curve}{\gamma}
\newcommand{\degree}{m}
\newcommand{\dist}{\mathrm{dist}}
\renewcommand{\div}{\mathrm{div}}
\newcommand{\dualnorm}{\phi^o}
\newcommand{\exponent}{\sigma}
\newcommand{\F}{\omega}
\newcommand{\facet}{F}
\newcommand{\Frank}{B^{\dualnorm}}
\newcommand{\initialnetwork}{\sS^0}
\newcommand{\kappafi}{\kappa^\phi}
\newcommand{\Lip}{\mathrm{Lip}}
\newcommand{\maximaltime}{T^\dag}
\newcommand{\Nmin}{N^0}
\newcommand{\nufi}{\nu^{\phi^o}}
\newcommand{\p}{\partial}
\newcommand{\R}{\mathbb{R}}
\newcommand{\Rn}{\mathbb{R}^n}
\renewcommand{\S}{\mathbb{S}}
\newcommand{\sA}{\mathscr{A}}
\newcommand{\sB}{\mathscr{B}}
\newcommand{\sC}{\mathscr{C}}
\newcommand{\sG}{\mathscr{G}}
\newcommand{\sS}{\mathscr{S}}
\newcommand{\sT}{\mathscr{T}}
\newcommand{\tandiv}{\mathrm{div}_\tau}
\newcommand{\triplejunction}{X}
\newcommand{\Wulff}{B^\phi}
\newcommand{\Wball}{\mathring{B}^\phi}
\newcommand{\Z}{\mathbb{Z}}
\renewcommand{\bar}{\overline}
\newcommand{\Int}[1]{\mathrm{Int}(#1)}
\setlist[itemize]{leftmargin=6mm} 
\title[Crystalline hexagonal curvature flow of networks]{Crystalline hexagonal curvature flow of networks: short-time, long-time and self-similar evolutions}
\author[G. Bellettini] {Giovanni Bellettini} 
\address[G. Bellettini]{University of Siena, via Roma 56,  53100 Siena, Italy  \&   International Centre for Theoretical Physics (ICTP), Strada Costiera 11, 34151 Trieste (Italy)}
\email{bellettini@diism.unisi.it}
\author[Sh. Kholmatov] {Shokhrukh Yu. Kholmatov} 
\address[Sh. Kholmatov]{University of Vienna,  Oskar-Morgenstern Platz 1, 1090 Vienna 
(Austria)}
\email{shokhrukh.kholmatov@univie.ac.at}
\author[F. Almuratov] {Firdavsjon M. Almuratov} 
\address[F. Almuratov]{Jizzakh branch of the National University of Uzbekistan, Sh. Rashidov avenue 259, 130100 Jizzakh (Uzbekistan)}
\email{almurotov@gmail.com}
\keywords{anisotropy, network, partition, crystalline curvature, crystalline curvature flow}
\subjclass[2020]{53E10, 47H10, 35D35, 74E15}
\date{\today}
\begin{document}
 

\begin{abstract}
We study the crystalline curvature flow of planar networks with a single hexagonal anisotropy. After proving the local existence of a classical solution for a rather large class of initial conditions, we classify the homothetically shrinking solutions having one bounded component. We also provide an example of network shrinking to a segment with multiplicity two.
\end{abstract}

\maketitle

\section{Introduction}

Crystalline evolution, more generally, \emph{geometric interface motions} in which surface tension acts  as a main driving force, model many processes in material sciences such as phase transformation, grain growth, crystal growth, ion beam and chemical etching {\it etc.}, and therefore, became the topic of many papers (see e.g. \cite{BBP:2001,BBBP:1997,Cahn:1991,CHT:1992,CK:1994,GGM:1998,Herring:1999,KL:2001,Mullins:1956,Taylor:1978,TCH:1992} and references therein). In the planar case, the interface is usually represented by a family of curves bounding different regions (phases, or grains) and moving in a nonequilibrium state \cite{CDR:1989,DKSch:1993,Herring:1951,Schl:2000}. 
In simplified models, the motion of these curves is described by a geometric equation relating, for instance, the normal velocity of the interface to its curvature. When the interface is represented by a single closed curve, i.e., the two phase case, such an evolution is usually called \emph{anisotropic curve shortening flow} (see e.g., \cite{Andrews:1998,AB:2011,Andrews:2002,GP:2022}). In presence of more than two phases in the plane, the interface is called a \emph{network}, and consists of a set of curves with multiple (typically triple)
junctions. 

The anisotropic curvature evolution of a network $\sS \subset \R^2$ is the formal gradient flow of the energy functional (the anisotropic length, or weighted $\phi$-length)
$$
\ell_\phi(\sS):=\int_{\sS} \phi^o (\nu_{\sS})~d\cH^1,
$$
where $\nu_{\sS}$ is a unit normal vector field to $\sS$ and the energy density $\phi^o$, sometimes called surface tension and initially defined on $\mathbb S^1$, is extended on $\R^2$ in a one-homogeneous way to a norm $\phi^o : \R^2\to[0,+\infty)$. This gradient flow is well-posed when $\phi^o$ is smooth and elliptic (for instance Euclidean) and $\sS$ is a finite union of sufficiently smooth curves with boundary, satisfying a suitable balance condition at triple junctions. In this case the network evolves, at least for short-times, by its anisotropic curvature in normal direction; furthermore, several qualitative properties and long time behaviour are known, see for instance \cite{BKh:2023,KNP:2021,MNP:2017,MNPS:2016,MNT:2004}. 

A challenging case is when $\phi^o$ is crystalline, i.e., its unit ball $B^{\phi^o}$ is a (centrally symmetric) polygon, hence with facets and corners. Here the phases are expected to be mostly polygonal, and to evolve under a sort of nonlocal (i.e., crystalline) curvature. A further mathematical obstruction to the study of long-time behaviour of the flow is the possible appearence of nonpolygonal curves arising from triple junctions during the evolution \cite{BCN:2006}. Even more difficult is the case when the curve $\Sigma_{ij}$ separating  phase $i$ and phase $j$ has its own anisotropy $\phi_{ij}^o,$ and the corresponding total length is the sum of all corresponding weighted $\phi_{ij}$-lengths $\ell_{\phi_{ij}}(\Sigma_{ij})$. When each $ \phi^o _{ij}$ is crystalline, this is a model for polycrystalline materials in the plane \cite{ELM:2021,GN:2000}; see also \cite{BKh:2023} for more.

In this paper we study short-time and long-time crystalline curvature flow of networks (Definition \ref{def:phi_curvature_flow}) with a single anisotropy whose Wulff shape $\Wulff$ (the dual body of the unit ball $\Frank$ of $\dualnorm$) is a regular hexagon with two horizontal facets (see Figure \ref{fig:wulffs}). Such an assumption on $\phi$ brings a lot of simplifications, and makes it possible an almost complete analysis, which would be too complicated and probably not available for a generic regular polygonal anisotropy.  It is worth to mention that most of the techniques developed here can be adapted to a rather general anisotropic setting.

Our main interests in the present paper are in short-time existence of the $\phi$-curvature flow, in singularity formation at the maximal time, and in homothetically shrinking solutions and, as a matter of fact, in an analysis of the conical critical points and conical local minimizers of $\ell_\phi$. These problems are nonlocal, and  the starting point is a rigorous definition of crystalline curvature of the network (i.e., the velocity of the flow), based on the notion of Lipschitz Cahn-Hoffman (CH) field (Section \ref{subsec:phi_regular_curves}) satisfying a balance condition at the multiple junctions (see \eqref{eq:balance_condition}); here we  mainly follow \cite{BCN:2006,BKh:2023}, where crystalline curvature is defined in a multi-anisotropic setting, and triple junctions might interact in the definition of crystalline curvature. 

Since the anisotropy is polygonal, we mostly restrict the flow to initial simple networks $\initialnetwork$-- polygonal networks whose segments/half-lines are parallel to some facet of the Wulff shape, and admit only triple junctions with a $120^o$-balance condition. Segments of $\initialnetwork$ are expected to evolve by parallel translation in normal direction, whereas the half-lines stay still. However, the notion of $\phi$-regular flow does not restrict to initial networks with the $120^o$-condition. For instance, any  critical network (Definition \ref{def:critical_network}) is a stationary solution, and there are many (even minimal) conical critical networks with three half-lines not satisfying the $120^o$-condition at the junctions (Theorem \ref{teo:phi_minimal_triods}). 

As observed in \cite{BCN:2006}, not all polygonal networks preserve their topology during the flow, and new segments or curves can arise at some time (in a continuous manner) from multiple junctions. To prevent such phenomena, which make difficult the description of the subsequent flow, since one looses the description via a system of ordinary differential equations, as in \cite{BCN:2006,BKh:2023} we need some topological assumptions on the initial network. In contrast to \cite{BCN:2006,BKh:2023} where polycrystalline networks consisting of three polygonal curves made by one segment and one half-line meeting at a single triple junction were considered, our simple networks admit an arbitrary finite number of triple junctions. 

Our main existence result reads as follows (see Theorem \ref{teo:short_time}).

\begin{theorem}\label{teo:existence_intro}
For any simple network $\initialnetwork,$ there exists the unique $\phi$-curvature flow $\{\sS(t)\}_{t\in [0,T^\dag)}$ starting from $\initialnetwork$ on a maximal time interval $[0, T^\dag)$. Moreover, if $T^\dag< +\infty$, then some segment of $\sS(t)$ vanishes as $t\nearrow T^\dag.$
\end{theorem}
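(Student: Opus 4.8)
The plan is to reduce the evolution to a finite system of ordinary differential equations for the positions of the segments, solve it by the Picard--Lindel\"of theorem, and then run a standard continuation argument.

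\emph{Step 1 (reduction to an ODE system).} Since $\initialnetwork$ is a simple network, it has finitely many edges, each a segment or half-line parallel to one of the three facet directions of $\Wulff$, and only $120^\circ$ triple junctions. As long as the flow keeps this structure, $\sS(t)$ is encoded by the scalar heights $h_1(t),\dots,h_M(t)$ of its segments (measured along the corresponding normals), the heights of the half-lines being frozen; each triple junction is the intersection point of the three lines carrying its edges, and the balance condition \eqref{eq:balance_condition}, imposed at every junction at all times, constrains the three normal velocities there by exactly the linear relation that keeps these lines concurrent, so the triple-junction structure is preserved and no edge rotates (segments translate in their normal direction). Each length $L_i$ is an affine function $L_i(h)$ of the heights. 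On each segment the minimal Lipschitz $\cahnhofman$-field (Section \ref{subsec:phi_regular_curves}) is affine, hence $\kappafi$ is constant along it, and \eqref{eq:balance_condition} at the junctions becomes a finite linear system in the nodal values of the $\cahnhofman$-fields; resolving it and reading off the constant $\kappafi=\tandiv\cahnhofman$ on each segment expresses the normal velocity of segment $i$ as a function $V_i=V_i\big((L_j(h))_j\big)$. Thus the flow is the ODE $\dot h=G(h)$ on the open set $\mathcal U:=\{h: L_j(h)>0\ \text{for all }j\}$.

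\emph{Step 2 (smooth dependence, local existence and uniqueness).} The key point is that $G$ is locally Lipschitz on $\mathcal U$. This reduces to checking that the finite-dimensional convex program behind the definition of $\kappafi$ --- minimizing the relevant quadratic functional of the nodal $\cahnhofman$-values under the pointwise constraint $\cahnhofman\in\p\dualnorm(\nu)$ (which, since $\Wulff$ is a fixed regular hexagon and every edge is facet-parallel, pins each value to an explicit facet of $\Wulff$) and the balance conditions --- has a unique solution depending $C^1$ on its coefficients, which are polynomial in the lengths $L_j>0$; this is a routine implicit-function/perturbation analysis of the corresponding KKT system, whose feasible set has nonempty interior as soon as all $L_j>0$. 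Granting this, Picard--Lindel\"of gives a unique maximal solution $h(\cdot)$ on $[0,\maximaltime)$, hence a unique $\phi$-curvature flow of simple networks starting from $\initialnetwork$; uniqueness within the whole class of Definition \ref{def:phi_curvature_flow} follows because any $\phi$-curvature flow issuing from a simple network is, for short time, a flow of simple networks (the $\phi$-regular polygonal structure with positive edge lengths and $120^\circ$ junctions cannot be destroyed instantaneously) and so solves the same ODE.

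\emph{Step 3 (characterization of $\maximaltime$).} Assume $\maximaltime<+\infty$ and, towards a contradiction, that $\min_i L_i(t)\ge\delta>0$ for all $t\in[0,\maximaltime)$. Since $t\mapsto\ell_\phi(\sS(t))$ is non-increasing along the flow and $\ell_\phi(\sS(t))=\sum_i\dualnorm(\nu_i)L_i(t)$ with $\dualnorm(\nu_i)>0$, all lengths are also bounded above, so $L_i(t)\in[\delta,C]$; on this compact range the linear system of Step 1 is (by Step 2) uniformly solvable, whence the velocities $V_i$ are bounded by some $M$. Then $|\dot h_i|\le M$, so $h(\cdot)$ is $M$-Lipschitz on $[0,\maximaltime)$ and extends continuously to $t=\maximaltime$, yielding a limit network $\sS(\maximaltime)$ which is again simple with all lengths $\ge\delta$. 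Applying Step 2 at $\sS(\maximaltime)$ extends the flow past $\maximaltime$, contradicting maximality. Hence $\min_i L_i(t)\to 0$ as $t\nearrow\maximaltime$: a segment vanishes.

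\emph{Main obstacle.} The substantive step is Step 2 --- proving that the nonlocal crystalline curvature, obtained by resolving the global balance condition for the minimal $\cahnhofman$-field, is a locally Lipschitz (indeed $C^1$) function of the configuration that does not degenerate away from the vanishing of a segment --- together with its companion fact that the only finite-time degeneration of a simple network under the flow is the collapse of a segment, so that in particular no new edge is created while all lengths stay positive.
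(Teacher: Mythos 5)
Your overall architecture (encode the flow by the scalar heights of the segments, turn the crystalline curvature into the right-hand side of a finite ODE system, solve by a fixed-point argument, then continue until a length degenerates) is exactly the paper's strategy, and Steps~1 and~3 are in the right spirit. However, Step~2 contains a genuine gap. The minimization defining $\kappafi$ is a quadratic program with \emph{box constraints}: each nodal value of the CH field at a triple junction is a point of a fixed facet of $\p\Wulff$, parametrized by a coordinate in $[0,\tfrac{2}{\sqrt3}]$. When the minimizer sits on the boundary of this box, the "routine implicit-function/perturbation analysis of the KKT system" does not give $C^1$ (nor, without further structure, consistent Lipschitz) dependence on the lengths, and --- more importantly --- the curvature-balance identity $(-1)^{\exponent_i}\kappafi_{S_i}+(-1)^{\exponent_j}\kappafi_{S_j}+(-1)^{\exponent_k}\kappafi_{S_k}=0$ at a junction, which is exactly what keeps the three lines concurrent under the ODE, is derived in the paper from the \emph{interior} criticality of the minimizer (Lemma \ref{lem:sum_curvature_balance}) and can fail to be available at boundary minimizers. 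The paper's essential input, which your argument omits, is the dichotomy of Lemma \ref{lem:quadratic_minimization} and Lemma \ref{lem:graph_triple}: after removing simple vertices, each connected component of the network either has all its triple junctions "at the boundary of the constraint" (and then all its segments have zero $\phi$-curvature and the component is frozen), or all "interior to the constraint" (and then the minimizer solves a nondegenerate linear system, so the velocity is real-analytic in the lengths). Lemma \ref{lem:preserving_parallelness} shows this classification is combinatorial, hence invariant under parallel deformations, which is what legitimizes freezing the bc-junctions ($h_i\equiv 0$) and running the contraction only on the analytic part. Without this decomposition your vector field $G$ is not known to be single-valued and locally Lipschitz on all of $\mathcal U$, and the preservation of the junction structure is not justified.

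A secondary issue: in Step~3 you invoke the monotonicity of $t\mapsto\ell_\phi(\sS(t))$ to bound the lengths from above. The paper explicitly cautions (remark after Definition \ref{def:phi_curvature_flow}) that length decrease is delicate for these flows, and moreover $\ell_\phi(\sS(t))=+\infty$ whenever half-lines are present, so the identity $\ell_\phi=\sum_i\dualnorm(\nu_i)L_i$ with finite $L_i$ is not available. This is repairable without monotonicity: once all lengths are $\ge\delta$, every $\phi$-curvature is bounded by $\tfrac{2}{\sqrt3\,\delta}$ (the CH increment along a segment is at most the sidelength of $\Wulff$), so on a finite time interval the heights, and hence the lengths, stay bounded, and the continuation argument goes through as in the paper's Step~2.
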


As mentioned earlier, critical networks are examples of initial networks for which $T^\dag=+\infty.$ In Example \ref{exa:formation_of_two_triple_junctions} we provide a 
noncritical $\mathcal N^0$ for which $T^\dag=+\infty$. 

Simple networks admit the following remarkable property. If we partition a simple network into connected graphs by removing all simple (not triple) vertices, then for each graph $G$ containing at least one triple junction, \emph{either} a minimal CH field is constant along each segment/half-line of $G$ and coincides with some vertex of the Wulff shape, \emph{or} its values never coincide on $G$ with vertices of the Wulff shape, except at the removed simple vertices (see Lemma \ref{lem:graph_triple}). In particular, those graphs whose segments have a constant minimal CH field, do not evolve by translation, i.e., stays still. Thus, to prove Theorem \ref{teo:existence_intro} we just need to study the evolution of the heights from the remaining graphs. 

This observation can be generalized to networks with junctions of higher degree provided that the segments/half-lines forming those junctions have zero $\phi$-curvature (see Theorem \ref{teo:chalpak_flow}). Such higher degree junctions could appear as a singularity after a collapse of two (or more) triple junctions, and Theorem \ref{teo:chalpak_flow} sometimes allows us to restart the flow after singularities in a regular manner (the topology of the network now may change, see Corollary \ref{cor:restart_of_flow}). However, it is worth to mention that unlike the networks containing only triple junctions with the $120^o$-condition, the networks admitting higher degree junctions or triple junctions not satisfying the $120^o$-condition are not simple, and may not be reached generically, for instance, by weak solutions. 

The next question is, of course, the asymptotic behaviour of the flow at a singular time. In the Euclidean case the blow-up behaviour of the rescaled networks can be established by means of Huisken's monotonicity formula \cite{INS:2014,MNPS:2016}; then, using parabolic rescaling, one approaches some limiting network as $t\nearrow T$, which may admit various singularities such as the loose of the $120^\circ$ condition (collapse of triple junctions), collapse of some curve (higher multiplicity), or even collapse of a phase to a point or to a segment. To our best knowledge, at the moment there are no examples of networks reducing to a network with higher multiplicity (see the multiplicity-one conjecture in \cite{MNP:2022}).
In our crystalline setting, we can provide explicit examples leading to those phenomena (see Section \ref{sec:examples}), including higher multiplicity segments (Example \ref{exa:high_multiple_chuvaks}), and it is worth to mention that if after vanishing of some segments, the resulting network remains simple (not necessarily parallel to the initial one and  possibly with multiple junctions), then the flow restarts until a subsequent singularity is reached. 

\begin{figure}[htp!]
\includegraphics[width=\textwidth]{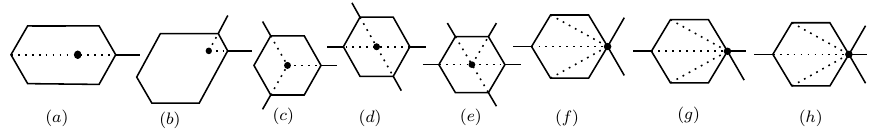}
\caption{All possible self-shrinkers with one bounded phase.}
\label{fig:self_shrinko}
\end{figure}

Our next main result  is a classification of self-shrinkers with a single bounded phase (see Section \ref{sec:homo_shrinkos} for more precise statements and the assumption on the topology of the initial network).

\begin{theorem}
Up to a rotation and mirror reflection, there are only eight different simple self-shrinkers possibly with multiple junctions (see Figure \ref{fig:self_shrinko} (a)-(h)).
\end{theorem}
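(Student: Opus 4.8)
The plan is to set up the self-shrinking equation as a system of equations on the heights of the facets and then exhaust the combinatorial possibilities. A homothetically shrinking solution satisfies $\sS(t)=\sqrt{1-t/T}\,\sS(0)$ (up to normalization), so each segment parallel to a facet of $\Wulff$ moves in its normal direction with velocity proportional to its signed distance to the shrinking center; comparing this with the definition of $\phi$-curvature flow (Definition \ref{def:phi_curvature_flow}), the crystalline curvature $\kappafi$ on each segment must equal $-c\,(\text{distance of that segment to the center})$ for a fixed constant $c>0$. Since the anisotropy is the regular hexagon, every facet direction is one of three, and the crystalline curvature of a segment $\facet$ of length $\ell(\facet)$ adjacent to two junctions is given by an explicit expression of the form $\kappafi(\facet)=\frac{1}{\ell(\facet)}(\text{sum of }\pm1/\dualnorm\text{-weighted contributions of the two neighboring segments at the junctions})$, as encoded by the minimal Cahn--Hoffman field and the balance condition \eqref{eq:balance_condition}. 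So the self-shrinker condition becomes a finite algebraic system relating the lengths and positions of the segments.

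First I would reduce to the bounded component: by Lemma \ref{lem:graph_triple} applied to each graph obtained after removing simple vertices, the half-lines forming triple junctions with the $120^\circ$ condition are stationary, and the whole unbounded part must be scale-invariant, which (being polygonal and parallel to facets) forces it to be a union of half-lines emanating from the bounded phase. Hence the bounded phase is a convex or non-convex hexagon-compatible polygon whose sides are parallel to the three facet directions, with some number $k$ of triple junctions on its boundary from which half-lines escape to infinity; the remaining boundary vertices are corners of the polygon. Counting via the $120^\circ$ balance condition at triple junctions and the angle constraints at polygon corners (each interior angle is $120^\circ$ or $240^\circ$ since sides lie in three directions) gives a short list of admissible combinatorial types: I expect $k\in\{0,1,2,3\}$ with the bounded phase being a triangle, a quadrilateral (rhombus/parallelogram), a pentagon, or a hexagon, which should already cut the candidate list down to a handful of shapes.

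Next, for each combinatorial type I would write the self-shrinking system explicitly. Place the shrinking center at the origin; by the homothety condition each side at signed distance $d_\facet$ from the origin satisfies $\kappafi(\facet)=-c\,d_\facet$. Using the crystalline curvature formula, $\kappafi$ on a side of a convex polygon with no junction is $\pm L_\phi/\ell(\facet)$ (with $L_\phi$ the $\phi$-length of the corresponding facet of $\Wulff$, a fixed constant), while on a side carrying a $120^\circ$ triple junction the contribution of the escaping half-line modifies this by a computable amount. This yields, for each type, a small nonlinear system in the side lengths and the position of the center; solving it (and imposing that the polygon closes up, i.e. the vector sum of the directed sides vanishes) determines whether a self-shrinker of that type exists and, if so, uniquely up to the symmetries of the hexagon — the dihedral group of order $12$, which together with mirror reflection accounts for the ``up to rotation and mirror reflection'' in the statement. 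I would also need to check that the resulting configuration is genuinely a $\phi$-curvature flow solution, i.e. that the minimal CH field has the right sign structure (Lemma \ref{lem:graph_triple}) so that the segments actually do move rather than stay still, and that no extra segment is created (the network stays simple along the homothety).

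The main obstacle I anticipate is the case analysis for the \emph{non-convex} bounded phase and for configurations with $2$ or $3$ triple junctions: here the crystalline curvature at a reflex vertex or at a junction-bearing side has a sign opposite to the convex case, and one must verify carefully that the self-shrinking sign condition $\kappafi(\facet)=-c\,d_\facet$ (negative curvature on the ``outermost'' sides, and the correct sign on sides close to or across the center) is simultaneously satisfiable together with the closing-up condition; it is precisely the interplay of these sign constraints that rules out most candidate types and leaves exactly the eight shown in Figure \ref{fig:self_shrinko}. A secondary technical point is handling the borderline configurations where a side has length tending to a value that would merge two junctions — these must be excluded as they fail to be simple networks. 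Once the sign bookkeeping is organized (most cleanly by treating the bounded phase together with the directions of its escaping half-lines as a single labeled combinatorial object and reading off $\kappafi$ from the Wulff-shape geometry), the remaining computations are routine linear or low-degree algebra.
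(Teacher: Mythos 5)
Your governing equation is the right one: writing $\sS(t)=r(t)\initialnetwork$ and differentiating, the self-shrinker condition is exactly that the $\phi$-curvature of each segment is proportional (with a fixed constant) to its signed distance from the homothety center, which is what the paper encodes as $ah'=\gamma_1=\dots=\gamma_6$ in \eqref{bizning_kamonlar} after parametrizing everything by the angles $\theta_i,\bar\theta_i$. However, there are genuine gaps. First, your combinatorial skeleton is wrong. In a simple network every vertex of the bounded phase has interior angle $120^o$, whether it is a simple vertex or a triple junction, so a convex bounded phase is forced to be an equiangular hexagon: there are no triangles, quadrilaterals or pentagons to consider, and conversely, when the homothety center is interior to the hexagon there can be up to \emph{six} escaping half-lines (at most one per vertex), not $k\le 3$. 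Up to the dihedral symmetry this gives twelve candidate configurations for an interior center; your list would miss the shrinkers with four and five half-lines (Figure \ref{fig:all_shrinkers} (h)--(k)). Second, you omit entirely the case in which the homothety center sits at a \emph{vertex} of the hexagon (forced whenever two half-lines leave the same vertex, or a half-line is collinear with an adjacent side); this is precisely where the self-shrinkers ``with multiple junctions'' in the statement come from, and it needs a separate analysis of the kind carried out for Figure \ref{fig:masheynik09}.

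Third, and most seriously, the substance of the proof is the explicit resolution of the nonlinear system for each configuration, and you defer all of it as ``routine low-degree algebra.'' It is not routine: for one half-line the elimination leads to coupled quartic equations in the ratios $\F_i=\sin(60^o+\theta_i)/\sin\theta_i$ whose unique positive solution gives the side pattern $a,2a,a,a,2a,a$ with the center dividing a diagonal in ratio $1:2$; for two adjacent half-lines the system reduces to a quartic with a single positive root, producing a hexagon with \emph{no} symmetry and irrational side ratios ($\approx 2.94771,\ 2.33925,\dots$); and several configurations (e.g.\ two opposite half-lines, various three- and four-half-line patterns) must be actively ruled out by showing the corresponding systems have no positive solutions. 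Sign bookkeeping alone does not eliminate them. Without carrying out these computations one cannot conclude that exactly eight configurations survive, so as written the proposal is a plan for a proof rather than a proof.
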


Notice that we do not need a priori any symmetry assumption  (for instance, Figure \ref{fig:self_shrinko} (b) has no symmetry lines). Recall that such a classification was done in the Euclidean case in \cite{ChG:2007} where the authors,  under some symmetry assumptions, characterize six different self-shrinking networks having only one bounded phase.

As in \cite{BKh:2023}, we do not treat here weak (i.e., generalized) flows: for this broad argument we refer the reader to \cite{BChKh:2020,BKh:2018,LO:2016,Tonegawa:2019}. 

\subsection*{Acknowledgements.} 
The authors are grateful to Matteo Novaga for some useful remarks on minimality of triple and quadruple junctions. G. Bellettini acknowledges support from PRIN 2022PJ9EFL ``Geometric Measure Theory: Structure of Singular Measures, Regularity Theory and Applications in the Calculus of Variations'', and from GNAMPA (INdAM). Sh. Kholmatov acknowledges support from the Austrian Science Fund (FWF) Stand-Alone project P 33716. 

\newpage 

\section{Preliminaries} 

In this section we introduce the notation and the definitions used throughout the paper.
 
\begin{wrapfigure}[11]{l}{0.3\textwidth}
\vspace*{-7mm}
\includegraphics[width=0.28\textwidth]{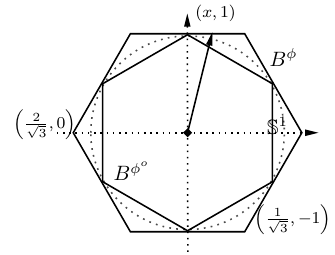} 
\caption{\small The Wulff shape $\Wulff$ of sidelength $\frac{2}{\sqrt3}$, and its dual $B^{\phi^o}.$}
\label{fig:wulffs}
\end{wrapfigure}
\subsection{Notation} 

Unless otherwise stated, all sets we consider are subsets of $\R^2.$ We choose the standard oriented basis $\{\be_1, \be_2\}$ of $\R^2$ and denote by $(x_1,x_2)$ the coordinates of $x\in\R^2$  with respect to this basis. $\Int{A}$ is the interior of $A\subset \R^2.$ $D_r(x)$ stands for the open (Euclidean) disc in $\R^2$ centered at $x\in\R^2$ of radius $r>0,$ and for shortness, set $D_r:=D_r(0)$. By $\cH^1$ we denote the one-dimensional Hausdorff measure in $\R^2.$

\subsection{Anisotropies} 

An (even) \emph{anisotropy} is a nonnegative, positively one-homogeneous even convex function $\phi$ in $\R^2$ satisfying $\{\phi=0\} = \{0\}$ (i.e., $\phi$ is a norm). In what follows, we fix the anisotropy $\phi$ in $\R^2$ whose closed unit ball (also called Wulff shape) $\Wulff:=\{\phi\le 1\}$ is the regular hexagon circumscribed to the unit circle centered at the origin, with two horizontal facets (see Figure \ref{fig:wulffs}). The closed unit ball $\Frank$ (also called Frank diagram) of the dual anisotropy 
$$
\phi^o(\xi):=\sup\limits_{\eta\in\R^2,\,\phi(\eta)=1}\,\,\xi\cdot\eta,\quad \xi\in\R^2,
$$
is also a regular hexagon inscribed to the unit circle with two vertical facets, as in Figure \ref{fig:wulffs}. The (six) facets of $\p \Wulff$ and of $\p \Frank$ are closed. 

By the definition of $\phi^o$ the following Young  inequality holds:
\begin{equation}\label{young_inequality}
\xi\cdot \eta \le \phi^o(\xi)\phi(\eta),\quad \xi,\eta\in \R^2. 
\end{equation}
We  write 
$$
\Wulff_R:=\{x\in\R^2:\,\,\phi(x)\le R\} 
\quad\text{and}\quad 
\Wball_R:=\{x\in\R^2:\,\,\phi(x)< R\},\quad R>0,
$$ 
with $\Wulff = \Wulff_1$.

\subsection{Curves} 

We call a closed set $\Gamma$ in $\R^2$ a \emph{curve}\footnote{We include unbounded curves without endpoints (case $I=(0,1)$) such as straight lines, parabolas, a union of two half-lines etc. meeting at one point, and unbounded curves with just one boundary point (case $I=[0,1)$) such as half-lines, half-parabolas etc., and finally, compact curves with two endpoints (possibly coinciding) such as segments, circles, arcs of circles etc.} if there exists an interval $I$ of the form $[0,1],$ $[0,1)$ or $(0,1),$ and an absolutely continuous function $\gamma:I\to\R^2$ such that $\gamma(I) = \Gamma.$ The function $\gamma$ is called a \emph{parametrization} of $\Gamma.$ In this paper we consider only \emph{embedded} curves, i.e., the map $\gamma:(0,1)\to\R^2$ is injective (and sometimes we identify the map $\gamma$ with the set $\Gamma$). When $I=[0,1]$ and $\gamma(0) = \gamma(1),$ we say $\Gamma$ is \emph{closed}. When $\gamma$ is $C^1$ (resp. Lipschitz) and $|\gamma'| > 0$ in $I$ (resp. a.e. in $I$), the map $\gamma$ is called a \emph{regular parametrization} of $\Gamma.$  A curve $\Gamma$ is $C^{k+\alpha}$ for some $k\ge0$ and $\alpha\in[0,1]$, $k+\alpha \geq 1$, if it admits a regular $C^{k+\alpha}$-parametrization. The tangent line to $\Gamma$ at a point $p\in \Gamma$ is denoted by $T_p\Gamma$ (provided it exists).
The (Euclidean) unit tangent vector to $\Gamma$ at $p$ is denoted by $\tau_\Gamma(p)$ and the unit normal vector is $\nu_\Gamma(p) = \tau_\Gamma(p)^\perp,$ where ${}^\perp$ is the counterclockwise $90^o$ rotation.  When there is no risk of confusion, we simply write $\tau$ and $\nu$ in place of $\tau_\Gamma$ and $\nu_\Gamma$. If $p= \gamma(x)$ and $\gamma$ is differentiable at $x$, then 
$$
\tau(p) = \frac{\gamma'(x)}{|\gamma'(x)|}
\qquad\text{and}\qquad 
\nu(p) = \frac{\gamma'(x)^\perp}{|\gamma'(x)|}.
$$
Unless otherwise stated, we choose tangent vectors in the direction of the parametrization. In particular, two oriented segments/half-lines are called \emph{parallel} provided they lie on parallel straight lines and their unit normals coincide.

A curve $\Gamma=\gamma(I)$ is \emph{polygonal} if for any $[a,b]\Subset I$ the curve $\gamma([a,b])$ is a finite union of segments. 
Any polygonal curve is a union of segments and at most two half-lines. A curve $\Gamma$ is (locally) \emph{rectifiable} if for any $[a,b]\Subset I$ the supremum
$$
\sup\limits_{a=t_0<t_1<\ldots < t_n = b}\quad \sum_{i=1}^n\,|\gamma(t_i) - \gamma(t_{i+1})| 
$$
is finite; equivalently, if and only if for any $[a,b]\Subset I$ and $\epsilon>0$ there exists a polygonal curve $\sigma : I \to \R^2$ such that 
$$
\sup_{x\in[a,b]}\,|\gamma(x) - \sigma(x)| <\epsilon.
$$
By definition any polygonal curve is rectifiable. Using \cite[Lemmas 3.2, 3.5]{Falconer:1985} one checks that a curve $\Gamma$ is rectifiable if for any $[a,b]\Subset I$ one has $\cH^1(\gamma([a,b])) <+ \infty$ and any rectifiable curve $\Gamma$ admits a unit tangent vector $\tau$ (and a corresponding unit normal $\nu$) $\cH^1$-a.e. along $\Gamma.$

The $\phi$-length of $\Gamma$ in an open set $\Omega\subset\R^2$ is defined\footnote{The $\phi$-length coincides with the Minkowski content of $\Gamma$ in $\Omega$, defined by means of the distance function induced by $\phi$.} as 
$$
\ell_\phi(\Gamma,\Omega):=\int_{\Omega\cap \Gamma} \phi^o(\nu)\,d\cH^1.
$$
When $\Omega=\R^2,$ we simply write 
$$
\ell_\phi(\Gamma):=\ell_\phi(\Gamma,\R^2).
$$

\subsection{Tangential divergence of a vector field} 

The \emph{tangential divergence} of a vector field $g\in C^1(\R^2; \R^2)$ over an embedded Lipschitz curve $\Gamma$ is defined as
$$
\tandiv g(p) = \nabla g(p)\tau(p)\cdot \tau(p)\quad \text{for $\cH^1$-a.e. $p\in\Gamma$}.
$$

The tangential divergence can also be introduced using parametrizations. More precisely, if $\gamma\in \Lip(I; \R^2)$ is a regular parametrization of $\Gamma$ and $g:\Gamma\to  \R^2$ is a Lipschitz vector field along $\Gamma$, i.e., $g\circ \gamma\in \Lip(I; \R^2)$, then 
\begin{equation*}
\tandiv\, g(p) = \frac{[g\circ \gamma]'(x)\cdot \gamma'(x)}{|\gamma'(x)|^2},\qquad p =\gamma(x)
\end{equation*}
at points of differentiability. One can readily check that the tangential divergence is independent of the parametrization.

\subsection{$\phi$-regular curves}\label{subsec:phi_regular_curves}

Let $\Gamma$ be a rectifiable curve, with $\cH^1$-almost everywhere defined unit normal $\nu(p).$ A vector field $N:\Gamma\to \p \Wulff$ is called a \emph{Cahn-Hoffman field} (CH field) if 
\begin{equation}\label{atirgulim}
N\cdot \nu = \phi^o(\nu)\,\, \quad \text{$\cH^1$-a.e. on $\Gamma,$}
\end{equation}
\begin{wrapfigure}[8]{l}{0.3\textwidth}
\vspace*{-2mm}
\includegraphics[width=0.28\textwidth]{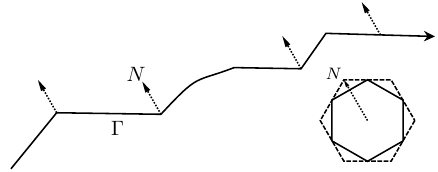}
\caption{\small A curve $\Gamma$ admitting a constant CH field $N$.} \label{fig:constant_cahn_hofman}
\end{wrapfigure}
namely $N\in \p\phi^o(\nu),$ where $\p$ stands for  the subdifferential.  Notice that reversing the orientation of the curve translates into a change of sign of $\nu$ and of the corresponding CH field, which is always ``co-directed'' as the unit normal. In what follows we shorthand $\nufi:= \frac{\nu}{\phi^o(\nu)}$.

\begin{definition}[\textbf{Lipschitz $\phi$-regular curve}]
We say the curve $\Gamma$ is \emph{Lipschitz $\phi$-regular} ($\phi$-\emph{regular}, for short) if it admits a \emph{Lipschitz} CH field.
\end{definition}

A typical example of $\phi$-regular curve is a polygonal curve with $120^o$-angle between adjacent segments/half-lines. However, $\phi$-regular curves need not be polygonal; for instance, the six arcs of unit circle in Figure \ref{fig:wulffs} having the same endpoints as the facets of $\p \Frank$ are $\phi$-regular.

\begin{proposition}\label{prop:characterization}
A rectifiable curve $\Gamma$ admits a constant CH field if and only if there is a facet $\facet \subset \p \Frank$ such that $\nufi(x)\in \facet$ for $\cH^1$-a.e. $x \in \Gamma$. 
\end{proposition}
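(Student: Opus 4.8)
The plan is to establish the two implications separately, using the characterization of the Cahn–Hoffman condition \eqref{atirgulim} together with the geometry of the hexagonal Frank diagram $\Frank$. Recall that $N\colon\Gamma\to\p\Wulff$ is a CH field iff $N(x)\in\p\phi^o(\nu(x))$ for $\cH^1$-a.e.\ $x$, equivalently $N(x)\cdot\nu(x)=\phi^o(\nu(x))$ and $\phi(N(x))=1$. By one-homogeneity of $\phi^o$ this is the same as requiring $N(x)\cdot\nufi(x)=1$ with $N(x)\in\p\Wulff$; since $\Frank=\{\eta:\eta\cdot\xi\le 1\ \forall\xi\in\Wulff\}$, the duality between $\Wulff$ and $\Frank$ says precisely that $N(x)\in\p\Wulff$ attains the supremum defining membership of $\nufi(x)$ in $\Frank$, i.e.\ $N(x)$ belongs to the (exposed) face of $\p\Wulff$ dual to the smallest face of $\p\Frank$ containing $\nufi(x)$.

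For the ``if'' direction, suppose there is a facet $\facet\subset\p\Frank$ with $\nufi(x)\in\facet$ for $\cH^1$-a.e.\ $x\in\Gamma$. A facet of the hexagon $\Frank$ is dual to a vertex $v_\facet$ of the hexagon $\Wulff$: concretely, $v_\facet$ is the unique point of $\p\Wulff$ with $v_\facet\cdot\xi=1$ for all $\xi\in\facet$ (the outward normal to the facet $\facet$, normalized to lie on $\p\Wulff$). Then the constant field $N\equiv v_\facet$ satisfies $N\cdot\nufi(x)=1$ for a.e.\ $x$, hence $N\cdot\nu(x)=\phi^o(\nu(x))$ and $N\in\p\Wulff$, so $N$ is a constant CH field. (A minor point to address: if $\nufi(x)$ happens to be an endpoint of $\facet$, i.e.\ a vertex of $\Frank$, the same $v_\facet$ still works since $v_\facet$ lies in the face of $\p\Wulff$ dual to that vertex.)

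For the ``only if'' direction, suppose $N\equiv v$ is a constant CH field, so $v\in\p\Wulff$ and $v\cdot\nufi(x)=1$ for $\cH^1$-a.e.\ $x\in\Gamma$. Thus a.e.\ $\nufi(x)$ lies in the set $\{\xi\in\Frank:\xi\cdot v=1\}$, which is the face of $\p\Frank$ exposed by $v$. Here the hexagonal structure enters: if $v$ is a vertex of $\Wulff$, this exposed face is exactly one facet $\facet$ of $\p\Frank$, and we are done; if instead $v$ lies in the relative interior of a facet of $\p\Wulff$, the exposed face is a single vertex $\{w\}$ of $\Frank$, and since every vertex of $\Frank$ is the common endpoint of two facets, we may take $\facet$ to be either of them, giving $\nufi(x)\in\facet$ a.e. In all cases such a facet $\facet$ exists.

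The only genuinely delicate point — really the crux — is the bookkeeping of which faces of $\p\Wulff$ are dual to which faces of $\p\Frank$ and making sure the edge cases ($\nufi$ taking values at a vertex of $\Frank$, or $v$ not being a vertex of $\Wulff$) are handled so that the statement comes out as an honest ``if and only if''; this is elementary convex duality for the regular hexagon, but worth writing carefully. Note also that, since we only ask $\nufi(x)\in\facet$ for a.e.\ $x$ and a single facet suffices, measurability of $N$ is automatic (it is constant), and rectifiability of $\Gamma$ is used only to guarantee that $\nu$, hence $\nufi$, is defined $\cH^1$-a.e., as recalled earlier in the text.
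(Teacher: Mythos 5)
Your proof is correct. The ``if'' direction is exactly the paper's: the constant field is the vertex of $\Wulff$ dual to the facet $\facet$ of $\p\Frank$. For the ``only if'' direction you take a cleaner route than the paper. You observe that a constant CH field $v$ satisfies $\phi(v)=1$, so $\{\xi:\xi\cdot v=1\}$ is a supporting line of $\Frank$ and the a.e.\ values of $\nufi$ lie in the exposed face $\{\xi\in\Frank:\xi\cdot v=1\}$, which by polarity of the two hexagons is either a facet of $\p\Frank$ (when $v$ is a vertex of $\Wulff$) or a single vertex of $\Frank$ (when $v$ is interior to a facet of $\Wulff$), and in either case is contained in a facet. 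The paper instead works with the differences $\nufi(x)-\nufi(y)$, places them on a line $L$ orthogonal to $N$, and runs a case analysis on whether $L$ contains a facet of $\p\Frank$ or crosses two facets, deriving a contradiction in the latter case via the splitting $\Gamma=X_1\cup X_2$; it never explicitly invokes that $L$ is a supporting line. Your duality argument subsumes that case analysis and is, if anything, easier to generalize to other polygonal anisotropies; just make sure to state explicitly that $\nufi(x)\in\p\Frank$ (since $\phi^o(\nufi)=1$) and that $\phi(v)=\max_{\xi\in\Frank}\xi\cdot v=1$, which are the two facts that make the exposed-face identification legitimate.
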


\begin{proof}
Since $\Frank$ is a regular hexagon, the CH field on each facet  $\facet \subset\p \Frank$ is the unique closest vertex $N$ of $\Wulff$ (see Figures \ref{fig:wulffs} and \ref{fig:constant_cahn_hofman}).
Thus, if $\nu_\Gamma\in \facet$ $\cH^1$-a.e. on $\Gamma,$ then $N$ provides a constant CH field on $\Gamma.$ Conversely, suppose $\Gamma$ admits a constant CH field $N,$ i.e., $\phi(N) = 1$ and $N\cdot \nufi  = 1$ $\cH^1$-a.e. on $\Gamma.$ Thus, 
\begin{equation}\label{t6zre}
N\cdot [\nufi(x) -\nufi(y)] = 0\quad\text{for $\cH^1$-a.e. $x,y\in\Gamma$}.
\end{equation}
We have two cases: 
\smallskip 

{\it Case 1:} $\nufi(x)$ is constant. In this case $\Gamma$ is a straight line, and as $S$ we take any facet of $\p \Frank$ which contains $\nufi(x).$
\smallskip

{\it Case 2:} $\nufi(x)$ is not constant. By \eqref{t6zre} the difference $\nufi(x) - \nufi(y)$ lies on a straight line $L$ orthogonal to $N.$ If $L$ contains some facet $\facet$ of $\p \Frank,$ then by \eqref{t6zre} $\nufi(x)\in L\cap \p \Wulff = \facet$ for $\cH^1$-a.e. $x\in\Gamma.$ On the other hand, if $L$ intersects two facets $\facet_1$ and $\facet_2$ of $\p \Frank,$ then up to a $\cH^1$-negligible set, we can write $\Gamma=X_1\cup X_2$, where $\nu(x)\in \facet_i$ for any $x\in X_i.$ 
Clearly, $\nu(x) = L\cap \facet_i$ for any $x\in X_i,$ and by nonconstancy, $\nu(x)$ cannot belong to the intersection $\facet_1\cap \facet_2.$ Let $N_1$ and $N_2$ be the vertices of $\Wulff,$ closest to $\facet_1$ and $\facet_2.$ Then easy geometric arguments show that $N_i\cdot \nufi(x) = 1$ for any $x\in X_i$ and $Q\cdot \nufi(x)<1$ for $x\in X_i$ and $Q\in \p \Wulff\setminus\{N_i\}.$ This implies there is no $N\in\p \Wulff$ such that $\nufi(x)\cdot N = 1$ for $\cH^1$-a.e. $x\in \Gamma$, a contradiction.
\end{proof}

\noindent
From Proposition \ref{prop:characterization} it follows, in particular, that there exists $k\in\Z$ such that a $60^ok$-rotation of $\Gamma$ is the generalized\footnote{I.e., possibly with vertical parts.} graph of a monotone function. We mention also that Proposition \ref{prop:characterization} holds for any crystalline anisotropy whose Wulff-shape is a regular polygon with an even number of facets, provided one changes appropriately the $60^o$-rotation condition.

\begin{remark}\label{rem:ajoyib_tenglik}
We shall frequently use the following: 

\begin{itemize}
\item[(a)] if sides $[AB]$ and $[BC]$ of the triangle $ABC$ are parallel to adjacent facets of $\Wulff,$ then  
$$
\ell_\phi([AC]) = \ell_\phi([AB]) + \ell_\phi([BC]);
$$

\item[(b)] if $ABCD$ is a trapezoid\footnote{A convex quadrangle whose two opposite sides are parallel.} with sides parallel to facets of $\Wulff$ such that $[AB]$ and $[CD]$ are parallel and $\cH^1([AB])<\cH^1([CD]),$ then  
$$
\ell_\phi([CD]) = \ell_\phi([AB]) + \ell_\phi([BC]);
$$
\item[(c)] if $ABC$ is a regular triangle with sides parallel to three (nonadjacent) facets of $\Wulff,$ then for any $X\in[AB]$
$$
\ell_\phi([AB]) = \ell_\phi([BC]) = \ell_\phi([CA]) = \ell_\phi([CX]).
$$
\end{itemize} 
\end{remark}

\begin{lemma}[\textbf{Curves with constant CH field}]\label{lem:curves_contant_cahnhoffman}
Let $\Gamma$ be a Lipschitz curve admitting a constant CH field $N$, and let $X,Y\in\Gamma.$ Let $\Sigma\subset\Gamma$ and $S:=[XY]$ be respectively the arc of $\Gamma$ and the segment connecting $X$ and $Y$. Then 
$$
\ell_\phi(\Sigma) = \ell_\phi(S).
$$
Moreover, $N$ is a (constant) CH field also for $S.$
\end{lemma}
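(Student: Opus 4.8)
The plan is to exploit Proposition \ref{prop:characterization}: since $\Gamma$ admits a constant CH field $N$, there is a facet $\facet \subset \p\Frank$ with $\nufi \in \facet$ for $\cH^1$-a.e. point of $\Gamma$. Hence, up to a $60^o k$-rotation, $\Gamma$ is the generalized graph of a monotone function, and in particular the unit normal along $\Gamma$ takes values in the (closed) facet $\facet$. I would first reduce to this normalized situation, since rotations preserve both $\ell_\phi$ (because $\phi$ is invariant under the relevant rotations, the Wulff shape being a regular hexagon) and the property of admitting a constant CH field. After rotating, assume without loss of generality that $\Gamma$ is (the graph of) a nondecreasing function of $x_1$, with all normals lying in a fixed facet of $\p\Frank$.

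The key computation is then that both $\Sigma$ and $S=[XY]$ have unit normal taking values in the same facet $\facet$: for $\Sigma$ this is inherited from $\Gamma$, and for $S$ it follows because $S$ is the chord of a monotone graph, so its slope lies between the extreme slopes realized along $\Gamma$, hence $\nu_S \in \facet$ as well (here one uses that $\facet$ is closed and that the set of admissible normal directions for a monotone graph is exactly the arc $\facet$). Consequently $\phi^o(\nu)$ has a particularly simple form along both curves: writing $\nu = (-y', 1)/\sqrt{1 + (y')^2}$ in the graph coordinates, $\phi^o(\nu)$ is an affine function of $\nu$ on the facet $\facet$, say $\phi^o(\nu) = N\cdot \nu$ with $N$ the closest vertex of $\Wulff$ to $\facet$. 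Therefore
$$
\ell_\phi(\Sigma) = \int_\Sigma \phi^o(\nu)\,d\cH^1 = \int_\Sigma N\cdot \nu_\Sigma\,d\cH^1
$$
and this last integral is the flux of the constant field $N$ through $\Sigma$, which by the divergence theorem (or simply by direct parametrized integration, since $N$ is constant) depends only on the endpoints $X,Y$ of $\Sigma$. The same identity applied to $S$ gives $\ell_\phi(S) = N\cdot \nu_S\,\cH^1(S)$, again equal to the flux of $N$ across $S$ between the same endpoints. Since $\Sigma$ and $S$ share endpoints and $N$ is divergence-free (trivially, being constant), the two fluxes coincide, giving $\ell_\phi(\Sigma) = \ell_\phi(S)$. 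The final assertion, that $N$ is also a CH field for $S$, is immediate: $N \in \p\Wulff$ by construction, and $N\cdot\nu_S = \phi^o(\nu_S)$ precisely because $\nu_S \in \facet$ and $N$ is the associated vertex, so \eqref{atirgulim} holds on $S$.

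The main obstacle is the claim that the chord $S = [XY]$ has its normal in the same facet $\facet$ as $\Gamma$; this is geometrically clear once $\Gamma$ is presented as a monotone graph (the chord slope is a weighted average of the tangent slopes along $\Gamma$, hence lies in the closed arc of directions spanned by $\facet$), but one should be slightly careful about the degenerate boundary cases where $\nufi$ hits an endpoint of $\facet$ (i.e.\ a vertex of $\p\Frank$), corresponding to vertical or horizontal portions — these are still covered since $\facet$ is closed and the "generalized graph" formulation in the remark after Proposition \ref{prop:characterization} allows vertical parts. A minor secondary point is to make sure $X, Y$ are ordered along the graph so that $\Sigma$ is genuinely the sub-arc between them; but as both $\ell_\phi(\Sigma)$ and $\ell_\phi(S)$ reduce to the flux of $N$ between the endpoints, the argument is insensitive to orientation. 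Everything else — the invariance of $\ell_\phi$ under hexagonal rotations, the affine form of $\phi^o$ on a facet, the triviality of $\div N$ — is routine.
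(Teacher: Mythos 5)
Your proof is correct, but it follows a genuinely different route from the paper's. The paper splits the identity into two inequalities: $\ell_\phi(\Sigma)\ge\ell_\phi(S)$ comes from the anisotropic minimality of segments (a Jensen-type fact), while the reverse inequality is obtained by decomposing the part of $\Sigma$ off the chord into arcs $\Sigma_j$ over the connected components of the complement of $\gamma^{-1}(S)$, applying the divergence theorem with the constant field $N$ on the region enclosed by $\Sigma_j$ and the corresponding sub-chord $S_j$, and then invoking the Young inequality \eqref{young_inequality} on $S_j$; the equality case of Young is what yields that $N$ is a CH field for $S$. You instead compute both $\ell_\phi(\Sigma)$ and $\ell_\phi(S)$ in one stroke as the flux $N\cdot(Y-X)^\perp$ of the constant field, which requires the single extra geometric input that the normalized normal $\nufi_S$ of the chord lies in the same facet $\facet\subset\p\Frank$ as the normals of $\Gamma$; your justification is sound, since $Y-X=\int\gamma'\,dt$ lies in the closed convex cone (of angular width $60^o<180^o$) spanned by the tangent directions of $\Gamma$, so the chord direction belongs to the same closed arc. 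This buys a one-shot argument with no case decomposition and no appeal to segment minimality, at the price of leaning on Proposition \ref{prop:characterization} and on identifying the given constant CH field $N$ with the vertex of $\Wulff$ dual to $\facet$ — an identification that is forced when $\nufi$ is nonconstant, while in the constant case $\Gamma$ is a straight line, $\Sigma=S$, and the lemma is trivial, so no harm is done. Both arguments are valid; yours is arguably shorter, the paper's is more self-contained and generalizes more readily to situations where the characterization of Proposition \ref{prop:characterization} is not available.
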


\begin{proof}
By the minimality of segments\footnote{A consequence of Jensen's inequality, see e.g. \cite{FFLM:2011}.} $\ell_\phi(\Sigma) \ge \ell_\phi(S)$. Let us prove the converse inequality. By assumption $\Sigma=\curve([0,1])$ for some $\curve\in \Lip([0,1];\R^2)$ with $|\curve'|>0$ a.e. on $[0,1].$ Let 
$$
J:=\{x\in[0,1]:\,\, \curve(x)\in S\}.
$$
Clearly, $J$ is a nonempty closed set so that $[0,1]\setminus J=\cup_j(a_jb_j)$ is an open set, $(a_j,b_j)$ its connected components. By the continuity of $\curve,$ $X_j:=\curve(a_j) $ and $Y_j:=\curve(b_j)$ belong to $S$ and $\Sigma$ does not intersect the (relative) interior of the segment $S_j:=[X_jY_j].$ Consider the bounded open set $C$ whose boundary consisting of the two rectifiable curves $\Sigma_j:=\curve([a_j,b_j])$ and $S_j.$ We may assume that the parametrization of $S_j$ is oriented from $X_j$ to $Y_j$ so that the outer unit normal $\nu_C$ of $C$ on $\Sigma_j$ coincides with $\nu_{\Sigma_j}$ and on $S_j$ with $-\nu_S.$ Applying the divergence theorem with the constant vector field $\xi:\R^2\to\R^2$, $\xi:=N$,  we get
\begin{align*}
0 = \int_C\div \xi \,d x = \int_{\Sigma_j} \nu_{\Sigma}
\cdot \xi d\cH^1 - \int_{S_j} \nu_S\cdot N d\cH^1.
\end{align*}
Hence by \eqref{atirgulim} and the Young inequality \eqref{young_inequality} we get 
\begin{equation}\label{ketshila}
\int_{\Sigma_j}\phi^o(\nu_\Sigma)d\cH^1 =  \int_{\Sigma} \nu_\Sigma\cdot \xi d\cH^1 =  \int_{S_j} \nu_S\cdot N d\cH^1 \le \int_{S_j}\phi^o(\nu_S)d\cH^1. 
\end{equation}
Therefore, 
$$
0\le \ell_\phi(\Sigma) - \ell_\phi(S) = \sum_j \Big(\int_{\Sigma_j}\phi^o(\nu_\Sigma)d\cH^1-\int_{S_j}\phi^o(\nu_S)d\cH^1\Big) \le 0.
$$
Hence, the inequality in \eqref{ketshila} is in fact an equality. Since both $N$ and $\nu_S$ are constant vector fields satisfying $\nu_S\cdot N = \phi^o(\nu_S),$ by definition $N$ is a CH field also for $S.$
\end{proof}

\subsection{Networks and polygonal networks}

An  \emph{ oriented network} (a network, for short) is a closed set $\sS \subset \R^2$ consisting of finitely many curves $\{\Gamma_i\}_{i=1}^M$  whose relative interiors are pairwise disjoint and the endpoints of each curve $\Gamma_i$ is also an endpoint of at least two other curves in case $\Gamma_i$ is not closed, or of another curve in case $\Gamma_i$ is closed. We call such an endpoint an $\degree$-multiple junction ($\degree$-junction, for short); $\degree$ is called the degree of the junction. The orientation of the network is given by the unit normals to each curve (defined via parametrization). 
Clearly, a network is a connected set. When all curves are polygonal with finitely many segments, $\sS$ is called  \emph{polygonal}, and the endpoints of half-lines and of segments of $\sS$ are called \emph{vertices} of $\sS$. A vertex is \emph{simple} if it is not a multiple junction. 

In the special case a polygonal network is a finite union of half-lines starting at the same point, it is called \emph{conical}. 
We write a polygonal network $\sS=\cup_i\Gamma_i$ frequently as a union $\sS=\cup_j S_j$ of its relatively closed segments/half-lines, where $S_j$ is a segment/half-line of a unique $\Gamma_i$ with  $\nu_{S_j} = \nu_{\Gamma_i}.$

The $\phi$-length of a network $\sS=\bigcup_{i=1}^M \Gamma_i$ in an open set $\Omega\subseteq\R^2$ is defined as 
$$
\ell_\phi(\sS,\Omega) = \sum\limits_{i=1}^M \ell_\phi(\Gamma_i,\Omega)
$$
(hence, possibly, $+\infty$).

\begin{definition}[\textbf{Admissible network}]\label{def:admissible_network}
A polygonal network $\sS=\cup_i S_i$ is \emph{admissible} if each segment/half-line is parallel to some facet of $\Wulff$, and the angle at any simple vertex  of $\sS$ is $120^o.$ 
\end{definition}

\begin{wrapfigure}[7]{l}{0.4\textwidth}
\vspace*{-2mm}
\includegraphics[width=0.38\textwidth]{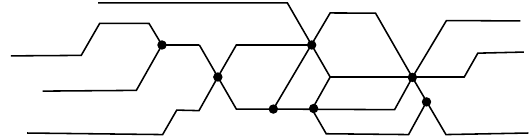}
\caption{\small An admissible network containing $\degree$-junctions for $\degree=3,4,5,6.$}
\label{fig:phi_reg_netw}
\end{wrapfigure}
Since $\Wulff$ is a regular hexagon, the degree of a junction of an admissible network is at most $6$ (see Figure \ref{fig:phi_reg_netw}). Definition \ref{def:admissible_network} is similar to the one in the two-phase case, where segments not parallel to facets of the Wulff shape are not considered, or to the multiphase case in \cite[Definition 4.10]{BCN:2006}, where (nonpolygonal) curves are excluded. For technical reasons, in \cite[Definition 4.10]{BCN:2006} admissible networks may contain only triple junctions and each curve of an admissible network should contain at least one segment. However, unlike \cite{BKh:2023}, in the present paper the half-lines of the network may end at a triple junction and admissible networks may contain multiple junctions. In particular, this includes Brakke-type spoons (a network consisting of the union of a closed curve and a half-line) and non-Lipschitz sets such as the union of two Wulff-shapes touching at a single point. 

\begin{definition}[\textbf{$\phi$-regular networks and CH fields}]
\label{def:phi_regular_networks_and_CH_fields}
An oriented network $\sS=\cup_i\Gamma_i$ is called \emph{Lipschitz $\phi$-regular} ($\phi$-\emph{regular}, for short) if every $\Gamma_i$ is $\phi$-regular, i.e., it admits a Lipschitz CH field $N_i,$ and if $X$ is a junction which is an endpoint of $\degree \ge3$-curves $\Gamma_{i_1},\ldots,\Gamma_{i_\degree}$ 
then the \emph{balance condition}
\begin{equation}\label{eq:balance_condition}
\sum\limits_{j=1}^\degree (-1)^{\exponent_j} N_{i_j}(X) = 0,
\end{equation}
holds, where $\exponent_j=0$ if $\Gamma_{i_j}$ is oriented from $X,$ and $\exponent_j=1$ otherwise (see Figure \ref{fig:admissible_netw}). 
\end{definition}
We call the map $N,$ defined as $N^{}_{\vert \Gamma_i}=N_i,$ a \emph{Cahn-Hoffman field} (shortly, a CH field) on $\sS.$ 

\begin{figure}[htp!]
\includegraphics[width=0.8\textwidth]{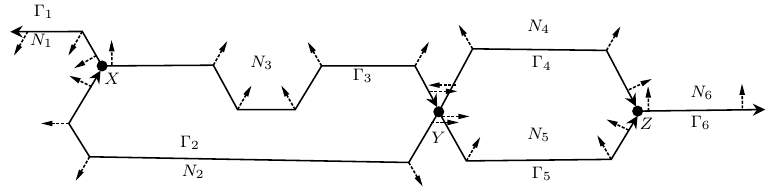}
\caption{\small 
A $\phi$-regular admissible network (with $m$-junctions, $m=3,4$) consisting of the union of six polygonal curves $\Gamma_i$ with a CH field. At the triple junction $\triplejunction$ we have $N_1 - N_2 + N_3=0,$ since $\Gamma_1$ and $\Gamma_3$ exit from $X$, while $\Gamma_2$ enters to $X.$ Thus, the balance condition \eqref{eq:balance_condition} holds with $\exponent_1=\exponent_2=0$ and $\exponent_3=1$. Similarly, $N_2-N_3 + N_4+N_5 = 0$ at the quadruple junction $Y$ and $-N_4-N_5+N_6=0$ at the triple junction $Z.$}
\label{fig:admissible_netw}
\end{figure}

\begin{wrapfigure}[8]{l}{0.35\textwidth}
\vspace*{-2mm}
\includegraphics[width=0.33\textwidth]{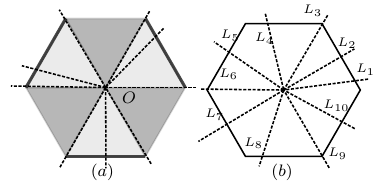}
\caption{}
\label{fig:conic_critic_network} 
\end{wrapfigure}

The collection of all CH fields over $\sS$ will be denoted by $\cahnhofman(\sS).$ In the polygonal $\phi$-regular case, when $\sS=\cup_iS_i$ and $N\in \cahnhofman(\sS),$ we abuse the notation $N_i:=N\big|_{S_i}.$ 

\begin{definition}[\textbf{Critical network}]\label{def:critical_network}
A $\phi$-regular network $\sS=\cup_i\Gamma_i$ is called a \emph{critical point} of the $\phi$-length, or shortly a \emph{critical network}, if it admits a CH field constant over each curve $\Gamma_i$ (the constant typically depends on $i$). 
\end{definition}
By definition, any network consisting of just one curve without boundary which admits a constant CH field, is critical. Next, consider a conical network $\sS$ consisting of $n\ge3$ half-lines starting from the same point, say the origin $O.$ When $n=3,$ $\sS$ is called a conical \emph{triod}. Unlike networks in the Euclidean setting, the non-strict convexity of $\Wulff$ allows several conical networks.

\begin{lemma}[\textbf{Conical critical networks}]\label{lem:conical_critical_networks}
A conical triod is critical if and only if its three half-lines intersect three non-adjacent facets of $\p \Wulff.$ More generally, a conical network with $n\ge3$ half-lines is critical if and only if there exist two integers $l_1,l_2\ge0$ such that its half-lines can be divided into $l_1$ pairwise disjoint groups of triplets and $l_2$ pairwise disjoint groups of doublets in a way that three half-lines in each triplet intersect three non-adjacent facets of $\p \Wulff$ and two half-lines in each doublet intersect two opposite facets of $\p \Wulff.$
\end{lemma}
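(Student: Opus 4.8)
The key structural fact is Proposition~\ref{prop:characterization}: a half-line through $O$ admits a constant CH field if and only if its normal $\nu^{\phi^o}$ lies in a facet of $\partial B^{\phi^o}$, and that constant is the unique vertex of $B^\phi$ closest to that facet. Thus for a conical network $\sS=\bigcup_{j=1}^n R_j$ of half-lines $R_j$, the network is critical precisely when each $R_j$ carries a constant CH field $N_j$ (automatically, since each $R_j$ is a half-line hence $\nu$ is constant, so $\nu^{\phi^o}$ lies in \emph{some} facet $F_j$) and the single balance condition \eqref{eq:balance_condition} at $O$ holds, i.e.\ $\sum_j N_j = 0$ with all half-lines oriented \emph{away} from $O$ (so all exponents $\sigma_j=0$). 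So the whole statement reduces to a purely combinatorial/geometric question: for which choices of vertices $N_1,\dots,N_n$ of the regular hexagon $B^\phi$ (with repetitions allowed, but subject to the geometric constraint that the associated half-lines must have pairwise disjoint relative interiors — they emanate from $O$ in distinct directions, though several may share the same facet $F_j$ hence the same $N_j$) does one have $N_1+\cdots+N_n=0$?

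First I would set up coordinates: the six vertices of $B^\phi$ are the sixth roots of unity scaled by the circumradius, call them $v_0,\dots,v_5$ with $v_k = \rho\,e^{i\pi k/3}$, satisfying $v_{k+3}=-v_k$ and the single linear relation $v_k - v_{k+1} + v_{k+2} = 0$ (adjacent triples sum appropriately — this is exactly Remark~\ref{rem:ajoyib_tenglik}(a) at the level of normals, and it is what makes a $120^\circ$ triod balanced). The closest vertex to a facet $F$ of $B^{\phi^o}$ is one of the $v_k$, and as the facet ranges over the six facets of $\partial B^{\phi^o}$ the corresponding closest vertex ranges over all six $v_k$; moreover a half-line hitting \emph{non-adjacent} facets corresponds to vertices $v_k$ that are themselves non-adjacent on $B^\phi$, i.e.\ $v_k, v_{k+2}, v_{k+4}$ — these sum to zero. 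Two opposite facets give $v_k$ and $v_{k+3}=-v_k$, summing to zero. So the ``if'' direction is immediate: partition into triplets and doublets of the stated type, each group sums to zero, hence the total does, hence balance holds and $\sS$ is critical.

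For the ``only if'' direction I need: if $\sum_{j=1}^n N_j=0$ where each $N_j\in\{v_0,\dots,v_5\}$, then the multiset $\{N_j\}$ decomposes into ``non-adjacent triples'' $\{v_k,v_{k+2},v_{k+4}\}$ and ``opposite pairs'' $\{v_k,v_{k+3}\}$. Let $a_k\ge 0$ be the multiplicity of $v_k$ among the $N_j$. The condition $\sum a_k v_k = 0$ with $v_k=\rho e^{i\pi k/3}$ is two real linear equations; using $v_{k+3}=-v_k$ it becomes $(a_0-a_3)v_0+(a_1-a_4)v_1+(a_2-a_5)v_2=0$, and since any two of $v_0,v_1,v_2$ are linearly independent while $v_0-v_1+v_2=0$ is the only relation, one finds $a_0-a_3=a_2-a_5 =: t$ and $a_1-a_4=-t$ for some integer $t$ (I would verify this by writing $v_1 = v_0+v_2$ wait — more carefully, $v_1 = (v_0+v_2)$? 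No: the relation is $v_0 - v_1 + v_2=0$ so $v_1=v_0+v_2$; substituting gives $(a_0-a_3)+(a_1-a_4) = 0$ and $(a_2-a_5)+(a_1-a_4)=0$, hence $a_0-a_3 = a_2-a_5 = -(a_1-a_4)$, call this common value $t$). Then: if $t\ge 0$, pull out $t$ non-adjacent triples $\{v_0,v_2,v_4\}$ (legitimate since $a_0,a_2\ge t$ and $a_4 = a_1 - t + \dots$; hmm, need $a_4\ge t$: from $a_1-a_4=-t$ we get $a_4 = a_1+t\ge t$, good), leaving multiplicities $a_0'=a_3$ for $v_0,v_3$ equal, $a_2'=a_5$ for $v_2,v_5$, $a_1'=a_4$ for $v_1,v_4$ — i.e.\ only opposite pairs remain, which we group as doublets; the case $t<0$ is symmetric (pull out triples $\{v_1,v_3,v_5\}$). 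Finally I must translate this multiset decomposition back to the geometric statement about half-lines and facets of $B^{\phi^o}$ — for each half-line, its facet $F_j$ determines $N_j$ uniquely, and conversely I should note that the grouping of the $N_j$'s induces the claimed grouping of the half-lines; the triod case ($n=3$) is then just the observation that $v_{j_1}+v_{j_2}+v_{j_3}=0$ with the $v$'s among six vertices forces them to be three pairwise non-adjacent ones (the only three-element zero-sum submultisets of $\{v_0,\dots,v_5\}$ are $\{v_k,v_{k+2},v_{k+4}\}$), which excludes any repetition and excludes adjacent choices.

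\textbf{Main obstacle.} The genuine content is the integer linear-algebra step — showing $\sum a_k v_k=0$ forces $a_0-a_3=a_2-a_5=-(a_1-a_4)$ and then that the nonnegativity of all $a_k$ lets one peel off triples and pairs without running out; this is elementary but is the only place a careless argument could fail (e.g.\ one must check the peeled-off triple uses indices all of which still have positive multiplicity). A minor secondary point is bookkeeping the orientations: if some half-lines are written as oriented \emph{into} $O$, one absorbs the sign into $N_j\mapsto -N_j$, which just permutes $v_k\leftrightarrow v_{k+3}$ and does not affect the combinatorics. Everything else is a direct application of Proposition~\ref{prop:characterization} and the hexagon's arithmetic.
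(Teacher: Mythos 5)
Your overall strategy is the right one, and the combinatorial core is correct and complete: from $\sum_k a_kv_k=0$ with $a_k\ge0$ integers you correctly derive $a_0-a_3=a_2-a_5=a_4-a_1=:t$, and the multiplicity bounds $a_0,a_2\ge t$ and $a_4=a_1+t\ge t$ (for $t\ge 0$) are exactly what is needed to peel off $|t|$ alternating triples and be left with opposite pairs; the triod case also follows. Since the paper omits its proof, the only issue is correctness, and there is one genuine gap, in the ``only if'' direction: you assume the constant CH value $N_j$ on each half-line is forced to be a vertex of $\Wulff$. This holds only when $\nufi_j$ lies in the relative interior of a facet of $\Frank$. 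When the half-line points at a vertex of $\Wulff$ --- equivalently, is parallel to a facet of $\Wulff$, which is exactly the situation for every \emph{admissible} conical network and is the case the paper actually exploits (see Example \ref{exa:four_networks} and Remark \ref{rem:not_calibrated_eshshak}, where $N_j$ varies over a whole facet of $\Wulff$) --- the set $\p\dualnorm(\nu_j)$ is an entire facet of $\Wulff$, and a balanced CH field may take non-vertex values there. For instance, four half-lines in directions $0^o,60^o,180^o,240^o$ admit the balanced choice consisting of the four relevant facet midpoints, so a critical network need not come with a vertex-valued balanced field handed to you, and your multiplicity count does not apply as written.

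To close the gap you must show that whenever the balance condition is solvable with $N_j\in\p\dualnorm(\nu_j)$, it is also solvable with each $N_j$ an endpoint (vertex) of its segment $\p\dualnorm(\nu_j)$. One route: consider the polytope $\{(N_j)_j:\ N_j\in\p\dualnorm(\nu_j),\ \sum_j(-1)^{\exponent_j}N_j=0\}$, pass to an extreme point, and check integrality; writing each segment-valued $N_j$ as $v_{k_j}+s_jv_{k_j+2}$ with $s_j\in[0,1]$, your two linear relations force any isolated fractional $s_j$ to be an integer, and any pair of fractional coordinates admits a feasible line through the point, so extreme points are vertex-valued. A secondary point you should also record: after the grouping, the facet of $\p\Wulff$ dual to the chosen vertex must be one of the (at most two) closed facets the half-line actually intersects; this is true, because the two endpoints of $\p\dualnorm(\nu_j)$ correspond precisely to the two closed facets of $\Wulff$ meeting at the vertex hit by the half-line, but it is not automatic from the algebra alone. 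With these two additions the argument is complete.
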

\begin{wrapfigure}{r}{0.45\textwidth}
\vspace*{-2mm}
\includegraphics[width=0.43\textwidth]{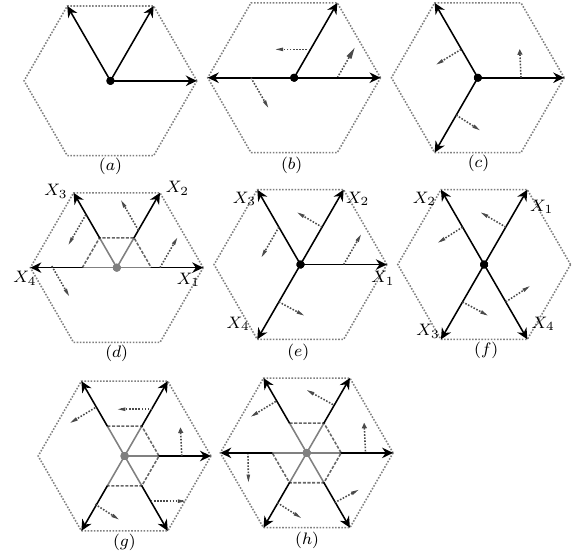}
\caption{\small Up to a rotation and a mirror reflection, there are exactly eight possible $\phi$-regular conical networks, the last seven being critical. Notice that in networks (d), (g) and (e) if we replace segments starting from the multiple junction with facets of a Wulff shape of sufficiently small radius, then the length of the network inside the larger Wulff shape strictly decreases.}
\label{fig:quad_junc}
\end{wrapfigure}
We omit the proof of this elementary lemma. In Figure \ref{fig:conic_critic_network} (a) three non-adjacent facets of $\p \Wulff$ are highlighted. Clearly, the remaining three facets are also non-adjacent. Notice that the conical network in Figure \ref{fig:conic_critic_network} (b), consisting of $10$ half-lines, is critical. Indeed, if we group the half-lines as $(L_1,L_5,L_9),$ $(L_2,L_7),$ $(L_3,L_8)$ and $(L_4,L_6,L_{10}),$ then these triplets and doublets satisfy the assertion of Lemma \ref{lem:conical_critical_networks} with $l_1=l_2=2.$

From Lemma \ref{lem:conical_critical_networks} and the symmetry of $\Wulff,$ up to a rotation by an integer multiple of $\pm60^o$ and a mirror reflection, there are eight possible admissible conical networks, seven of which are critical (see Figure \ref{fig:quad_junc}). Notice that the network in case (a) is not $\phi$-regular, because at the triple junction we cannot define any triple satisfying the balance condition \eqref{eq:balance_condition}.

\begin{example}\label{exa:four_networks}
Consider the conical $\phi$-regular networks $\sS$ with four half-lines $L_1,L_2,L_3,L_4$ crossing $\p \Wulff$ at vertices $X_1,X_2,X_3,X_4,$ respectively. As mentioned above, up to a rotation and a mirror reflection, $\sS$ can be only one of the networks drawn in Figure \ref{fig:quad_junc} (d)-(f). Since the half-lines are oriented from the quadruple junction, we can immediately check that a balance condition at quadruple junction for any admissible CH field $N$ over $\sS$ implies $N_1 + N_3 = 0$ and $N_2+N_4=0.$ 
One can readily check that: in case of ``$W$'' in Figure \ref{fig:quad_junc} (d) $N$ is uniquely defined by $N_1=-N_3=(\frac{1}{\sqrt3},1)$ and $N_2=-N_4=(-\frac{1}{\sqrt3},1)$; in case of ``$\Psi$'' in Figure \ref{fig:quad_junc} (e),  $N_1=-N_3=(\frac{1}{\sqrt3},1)$ but $N_2$ ($=-N_4$) can be arbitrarily chosen (still satisfying the constraints); in case of ``$X$'' in Figure \ref{fig:quad_junc} (f), both $N_1$ ($=-N_3$) and $N_2$ ($=-N_4$) can be arbitrarily chosen satisfying the constraints.
\end{example}

\subsection{$\phi$-minimal networks}
Let $\sS$ be a network and $\Omega \subseteq \R^2$ an open set; a network $\sA$ is called a compact perturbation of $\sS$ in $\Omega$ provided $\sS \Delta \sA \Subset \Omega$. 

\begin{definition}[\textbf{Local minimizers and minimal networks}]
A polygonal admissible network $\sS$ is called \emph{a local minimizer} of the $\phi$-length (shortly \emph{a local minimizer}) in an open set $U\subset\R^2$ if
$$
\ell_\phi(\sS,U) \le \ell_\phi(\sA,U)
$$
for any \emph{compact perturbation} $\sA$ of $\sS$ in $U$, i.e., for any network $\sA$ such that $\sS\Delta\sA\Subset U.$ If $\sS$ is a local minimizer in every bounded open subset of $\R^2,$ we call it \emph{$\phi$-minimal (shortly, minimal).} 
\end{definition}

Notice that to check the $\phi$-minimality of a network, it is enough to show its local minimality in every disc or every $\phi$-ball.

\begin{remark}
Compact perturbations of a network are still networks, in particular they are connected. However, they do not need to be polygonal; unlike minimal partitions in \cite{BChKh:2020,BKh:2018}, they \emph{need not preserve} the number of phases (or regions). 
\end{remark}
\newpage 
\begin{wrapfigure}[22]{l}{0.4\textwidth}
\includegraphics[width=0.38\textwidth]{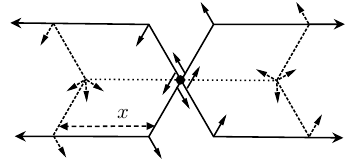}
\caption{\small A critical nonminimal network $\sS^0$ 
and its length-decreasing compact perturbation. This network is a local minimizer in every disc centered at the quadruple junction and not intersecting the half-lines. 
However, the dotted network $\mathcal N^0$ (obtained from $\sS^0$ with a ``large'' 
perturbation) has $\phi$-length strictly smaller than the one of $\sS^0$ (in addition, it satisfies the 
interior to the constraint condition in 
the sense of Definition \ref{def:interior_to_and_at_the_boundary_of_the_constraint}). 
Unlike $\sS^0$, the segments of 
$\mathcal N^0$ have nonzero $\Phi$-curvatures, and  during the flow they slide away from the quadruple junction. 
The evolution of $\sS^0$ and $\mathcal N^0$ 
will be considered in Example \ref{exa:formation_of_two_triple_junctions}.
}
\label{fig:non_minimal}
\end{wrapfigure}
Clearly, not every critical network is minimal (see Figure \ref{fig:quad_junc} (d), (g) and (h) and Figure \ref{fig:non_minimal}), however every minimal network is critical. 
Indeed, if $\Gamma_i$ does contain a small arc $\Gamma'$ with endpoints $X,Y\in \Gamma_i$ such that any $N\in \cahnhofman(\sS)$ is not constant over $\Gamma',$ then using a calibration argument as in Proposition \ref{prop:characterization}, one checks that the segment $[XY]$ has strictly less $\phi$-length than $\Gamma'.$ Thus, replacing $\Gamma'$ with $[XY]$ (since $\Gamma'$ is small, such a replacement still produces a network) we get a compact perturbation of $\sS$ which has strictly less $\phi$-length in any disc containing $\Gamma'.$ 

\begin{example}
Let $\sS$ consist a unique curve $\Gamma$  without boundary, admitting a constant CH vector field $N$. As we mentioned above, $\sS$ is critical. Let us show that $\sS$ is also minimal. Let $R>0$ and $\sA$ be any compact perturbation of $\sS$ in $D_R:= D_R(O)$, and let $X,Y\in \p D_R\cap \Gamma$ be such that the curve $\Gamma'\subset\Gamma$ connecting $X$ and $Y$ is the maximal, i.e., any other curve $\Gamma''$ with endpoint at $\p D_R$ satisfies $\Gamma''\subseteq \Gamma'.$ By Lemma \ref{lem:curves_contant_cahnhoffman} $\ell_\phi(\Gamma') =  \ell_\phi([XY]).$ As $X,Y\in \sA$ and $\sA$ is (arcwise) connected, there exists a curve $\Sigma'\subset\sA$ of $\sA$ connecting $X$ to $Y.$ By the anisotropic minimality of segments 
$$
\ell_\phi(\Sigma') \ge \ell_\phi([XY])=\ell_\phi(\Gamma').
$$
Since $\sA\setminus D_R = \sS\setminus D_R,$ for any $\bar R > R$ such that $\Gamma'\cup (\sA\Delta \sS)\Subset D_{\bar R}$ we have 
$$
\ell_\phi(\sA,D_{\bar R}) = \ell_\phi(\sA\setminus \Sigma',D_{\bar R})+\ell_\phi(\Sigma')\ge \ell_\phi(\sS\setminus \Gamma',D_{\bar R})+\ell_\phi(\Gamma') = \ell_\phi(\sS,D_{\bar R}).
$$
Thus, $\sS$ is minimal.
\end{example}

Next we study the minimality of some critical conical networks. 

\begin{theorem}[\textbf{Minimal conical triods}]
\label{teo:phi_minimal_triods}
Any conical critical triod is minimal. 
\end{theorem}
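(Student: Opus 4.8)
The plan is to use a calibration argument. A conical critical triod $\sS$ consists of three half-lines $L_1,L_2,L_3$ emanating from a point $O$, parallel to three pairwise non-adjacent facets of $\Wulff$; by criticality each $L_i$ carries a constant CH field $N_i$ (the vertex of $\Wulff$ closest to the corresponding facet of $\Frank$), and the balance condition holds with the natural orientations. The three vectors $\pm N_1, \pm N_2, \pm N_3$ are precisely the six vertices of $\Wulff$. I would extend the CH field of $\sS$ to a divergence-free vector field $\Xi$ defined on all of $\R^2$ (or at least on a large $\phi$-ball) with $\phi(\Xi)\le 1$ everywhere, so that $\Xi$ calibrates $\sS$ in the sense that $\ell_\phi(\sS,U)=\int_{\sS\cap U}\Xi\cdot\nu\,d\cH^1$ while for any competitor $\sA$ one has $\int_{\sA\cap U}\Xi\cdot\nu\,d\cH^1\le\ell_\phi(\sA,U)$ by the Young inequality \eqref{young_inequality}. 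The three half-lines divide the plane into three sectors; on each sector I set $\Xi$ equal to the constant vector that is the common CH vertex on the two half-lines bounding that sector — this is possible exactly because the two facets involved, being non-adjacent, share a vertex of $\Wulff$. One checks $\Xi$ is piecewise constant, has $\phi(\Xi)=1$, is continuous across each $L_i$ (both adjacent sectors assign the same vertex there, which is why $\Xi\cdot\nu_{L_i}=\phi^o(\nu_{L_i})$), hence $\div\Xi=0$ distributionally across the $L_i$.

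With such $\Xi$ in hand, the minimality argument is standard: fix a bounded open set $U$ and a compact perturbation $\sA$ with $\sS\Delta\sA\Subset U$. Choose a large $\phi$-ball (or disc) $B$ with $\sS\Delta\sA\Subset B\Subset U$; outside $B$ the networks coincide. Applying the divergence theorem to $\Xi$ on the bounded regions cut out inside $B$ by $\sA$, together with the fact that $\sS\cap B$ and $\sA\cap B$ have the same trace on $\partial B$ (three points), gives $\int_{\sS\cap B}\Xi\cdot\nu\,d\cH^1=\int_{\sA\cap B}\Xi\cdot\nu\,d\cH^1$ — here one uses that both networks are connected and agree near $\partial B$, so the boundary fluxes match. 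Then
$$
\ell_\phi(\sS,B)=\int_{\sS\cap B}\Xi\cdot\nu\,d\cH^1=\int_{\sA\cap B}\Xi\cdot\nu\,d\cH^1\le\int_{\sA\cap B}\phi^o(\nu)\,d\cH^1=\ell_\phi(\sA,B),
$$
and adding the common contribution outside $B$ yields $\ell_\phi(\sS,U)\le\ell_\phi(\sA,U)$. Since $U$ is arbitrary, $\sS$ is minimal.

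The main obstacle is the bookkeeping in the flux-matching step: a compact perturbation $\sA$ need not be polygonal, need not preserve the number of regions, and need not be connected in a naive sense, so one has to argue carefully that $\int_{(\sA\setminus\sS)}\div\Xi=0$ over the symmetric-difference region and that the boundary terms on $\partial B$ cancel. The clean way is to integrate $\div\Xi=0$ over each connected component of $B\setminus(\sS\cup\sA)$ and sum, using that $\Xi$ is continuous so there are no jump contributions along $\sS\cup\sA$ other than the oriented boundary terms that reorganize into $\int_{\sA\cap B}\Xi\cdot\nu-\int_{\sS\cap B}\Xi\cdot\nu$ plus a $\partial B$ term that vanishes because $\sA$ and $\sS$ induce the same partition of $\partial B$ into arcs. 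A secondary subtlety is ensuring the extension $\Xi$ genuinely satisfies $\phi(\Xi)\le 1$: since each sector-value is a vertex of $\Wulff$, in fact $\phi(\Xi)=1$ identically, so this is automatic once the combinatorics of "non-adjacent facets share a Wulff vertex" is spelled out, which is where the hypothesis that the triod is critical is used in full.
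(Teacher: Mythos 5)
Your calibration field $\Xi$ is not well-defined, and the single-field approach cannot work for a triple junction. The three half-lines of a critical conical triod carry three \emph{distinct} constant CH vectors $N_1,N_2,N_3$ (three alternating vertices of $\Wulff$ with $N_1+N_2+N_3=0$); moreover two non-adjacent facets of a hexagon do not share a vertex, so there is no ``common CH vertex'' that a sector could inherit from its two bounding half-lines, and no single constant vector on a sector can satisfy $\Xi\cdot\nu_{L_i}=\phi^o(\nu_{L_i})$ on both of them. More fundamentally, even if some divergence-free $\Xi$ with $\phi(\Xi)\le1$ and $\Xi\cdot\nu=\phi^o(\nu)$ on $\sS$ existed, your flux-matching step fails: the flux of a single divergence-free field through a network containing a triple junction is not determined by its trace on $\partial B$. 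Already for a constant field $c$, the flux through a triod joining $X_1,X_2,X_3$ at a junction $T$, with all branches oriented away from $T$, equals $c\cdot\big((X_1+X_2+X_3)^\perp-3T^\perp\big)$, which depends on $T$; the junction enters the boundary of the associated $1$-current with multiplicity $3$ and cannot cancel. This is precisely the obstruction that forces the use of \emph{paired} calibrations for odd-degree junctions.

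The paper's proof does two things your plan omits. First, using the connectedness of $\sA\cap\Wulff_R$ it extracts from the competitor three curves $\Gamma_1,\Gamma_2,\Gamma_3$ with disjoint relative interiors joining the points $X_i$ to a common point $T$, so that the competitor induces a three-region partition of $\Wball_R$ with the same boundary data as $\sS$. Second, it calibrates with a \emph{triple} of constant fields $\xi_1=N_3$, $\xi_2=0$, $\xi_3=-N_1$, one per region, satisfying $\xi_i-\xi_j=N_k$ and $\phi(\xi_i-\xi_j)\le1$; the divergence theorem is applied to each $\xi_i$ on its own region, and each interface $L_k$ is calibrated by the difference $\xi_i-\xi_j$ rather than by a globally divergence-free field (the associated single field has a nonzero jump of its normal component across each $L_k$). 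Note that these $\xi_i$ are not vertices of $\Wulff$ (one of them is $0$), so your construction is not the paper's in disguise. To repair your argument you would need to replace the single field by such a paired calibration and add the reduction of a general compact perturbation to a three-curve competitor.
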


\begin{proof}
Let $\sS$ be a conical critical triod -- a network consisting of a union of three (different) half-lines $L_1,L_2,L_3$ starting at the origin $O$ and crossing three non-adjacent facets of $\Wulff.$

Let $R>0$ and $\sA$ be any compact perturbation of $\sS$ in $\Wball_R$ and let $X_i:=L_i\cap \p \Wulff_R.$ Since $\Wulff_R\cap \sA$ is connected there exists a point $T\in\sA$ and three curves $\Gamma_1,\Gamma_2,\Gamma_3 \subset \Wulff_R\cap \sA$ with disjoint relative interiors such that $\Gamma_i$ connects $X_i$ and $T$ so that $\Gamma_1\cup\Gamma_2\cup\Gamma_3$ form a partition of $\mathring{B}_R^\phi$ with the same boundary conditions as $\sS.$ 
For, take any curve $\Gamma'\subset \Wulff_R\cap \sA$ connecting $X_1$ to $X_3,$ and a curve $\Gamma''\subset \Wulff_R\cap \sA$ connecting $X_2$ to $\Gamma'$. Let $T$ be the first intersection of $\Gamma''$ with $\Gamma'$ so that its subcurve $\Gamma_2$ connecting $X_2$ to $\Gamma'$ (at $T$) is minimal. Notice that $T$ divides $\Gamma'$ into two subcurves $\Gamma_1$ and $\Gamma_3$ connecting $T$ to $X_1$ and $X_3.$
Let $\tilde \sA:=\Gamma_1\cup \Gamma_2\cup\Gamma_3\cup (\sS \setminus \mathring{B}_R^\phi).$ Let $N_1,N_2,N_3\in\p \Wulff$ be constant vectors such that $N_i\cdot \nu_i = \phi^o(\nu_i)$ on $L_i$ and $N_1+N_2+N_3 = 0$ at $O$. Define
$$
\xi_1 := N_3,\quad \xi_2 := 0,\quad \xi_3 := -N_1,
$$
so that $\xi_i-\xi_j = N_k$ for $ijk\in\{123,231,312\}$. These $\xi_i$ play the role of a (constant) paired calibration \cite[Theorem 3.2]{LM:1994}, and thus
$
\ell_\phi(\sS,\Wball_R) \le \ell_\phi(\tilde \sA,
\Wball_R) \le \ell_\phi(\sA, \Wball_R).
$
Hence, $\sS$ is minimal.
\end{proof}

\begin{wrapfigure}[8]{l}{0.35\textwidth}
\includegraphics[width=0.33\textwidth]{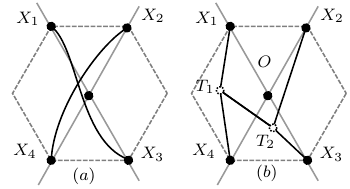}
\caption{\small }
\label{fig:persey}
\end{wrapfigure}

In the case of four half-lines we use a different method (see Remark \ref{rem:not_calibrated_eshshak}).

\begin{theorem}[\textbf{Minimality of $4$-junctions}]
Let $\sS$ be a conical network consisting of four half-lines starting from the origin $O$, parallel to facets of $\Wulff$ and not lying in a half-plane. Then $\sS$ is minimal.
\end{theorem}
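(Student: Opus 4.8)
The plan is to pass to competitors inside large Wulff balls and to replace the paired calibration used for triods (which, as Remark~\ref{rem:not_calibrated_eshshak} explains, is not available here) by a direct comparison that uses only the balance condition \eqref{eq:balance_condition}. Fix $R>0$ and a compact perturbation $\sA$ of $\sS$ in $\Wball_R$, and set $P_i:=L_i\cap\p\Wulff_R$. Since $\sS\setminus\Wball_R$ is the union of four pairwise disjoint half-lines and $\sA$ is connected, $\sA\cap\Wulff_R$ is a connected network containing $P_1,P_2,P_3,P_4$; moreover $\sS\cap\Wulff_R=\bigcup_{i}[OP_i]$ and $\ell_\phi([AB])=\dualnorm((B-A)^\perp)$, so it suffices to prove that $\bigcup_i[OP_i]$ has the least $\phi$-length among all connected networks spanning $P_1,\dots,P_4$. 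By Example~\ref{exa:four_networks} these points are four vertices of $\p\Wulff_R$, in cyclic order, and the balance relations $N_1+N_3=0=N_2+N_4$ hold for every CH field of $\sS$.

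I would first replace $\sA\cap\Wulff_R$ by a spanning tree $\sA'$ of $\{P_1,P_2,P_3,P_4\}$ inside $\Wulff_R$ (discarding loops and components not meeting the $P_i$, which only decreases $\ell_\phi$), and then use that a tree embedded in a disc and respecting the cyclic order of $P_1,\dots,P_4$ has essentially one of two shapes: a single internal ($4$-valent) node, or an ``$H$'' joining two consecutive pairs — the pairing $\{P_1P_3\mid P_2P_4\}$ being excluded because any arc from $P_1$ to $P_3$ separates $P_2$ from $P_4$. Writing $\psi(v):=\dualnorm(v^\perp)$ (a norm), the minimality of segments (Jensen's inequality, exactly as in Lemma~\ref{lem:curves_contant_cahnhoffman}) bounds $\ell_\phi(\sA')$ from below by the sum of the quantities $\psi(P_i-Q)$ over the straightened edges of the tree, and it remains to minimise this convex function over the position(s) of the at most two internal nodes. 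For the $4$-valent shape the lower bound is $\sum_i\psi(P_i-Q)$, and $Q=O$ is a minimiser precisely because $0\in\sum_i\p\psi(P_i)$; since $\p\psi(P_i)$ equals, up to the fixed rotation $v\mapsto v^\perp$, the set $\p\dualnorm(\nu_{L_i})$, this is nothing but the balance condition \eqref{eq:balance_condition}, which holds as $\sS$ is $\phi$-regular (guaranteed, for a four-ray cone parallel to facets, by the hypothesis that $\sS$ does not lie in a half-plane). For an ``$H$'' grouping the consecutive pairs $(P_1,P_2)$ and $(P_3,P_4)$ the lower bound is $\psi(P_1-Q_1)+\psi(P_2-Q_1)+\psi(Q_1-Q_2)+\psi(P_3-Q_2)+\psi(P_4-Q_2)$, and $Q_1=Q_2=O$ is again a minimiser as soon as, for each consecutive pair, one can choose $N_i\in\p\dualnorm(\nu_{L_i})$ and $N_{i+1}\in\p\dualnorm(\nu_{L_{i+1}})$ with $\phi(N_i+N_{i+1})\le1$; crucially these vectors are allowed to vary with the branch and need not assemble into one CH field on $\sS$. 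Since $\sS$ does not lie in a half-plane, two consecutive half-lines make an angle of $60^\circ$ or $120^\circ$, so $\p\dualnorm(\nu_{L_i})$ and $\p\dualnorm(\nu_{L_{i+1}})$ are adjacent facets of $\Wulff$ or facets lying two apart; in both cases, choosing $N_i,N_{i+1}$ to be the two endpoints of these facets lying furthest apart makes $N_i+N_{i+1}$ a vertex of $\Wulff$, so $\phi(N_i+N_{i+1})=1$ (equivalently, one may invoke Remark~\ref{rem:ajoyib_tenglik}). Putting these together, $\ell_\phi(\sA,\Wball_R)\ge\ell_\phi(\sA')\ge\ell_\phi\big(\bigcup_i[OP_i]\big)=\ell_\phi(\sS,\Wball_R)$ for every $R$, so $\sS$ is minimal.

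The main difficulty is exactly the $H$-type comparison: there is no family of constant vectors $\xi_1,\dots,\xi_4$ calibrating $\sS$, since forcing equality on $\sS$ requires $\xi_i-\xi_{i+1}=N_i$, which over-determines the $\xi_i$ and makes some difference $\xi_1-\xi_3$ fall outside $\Wulff$ (this is the content of Remark~\ref{rem:not_calibrated_eshshak}); hence one genuinely has to argue shape by shape and, within each branch of an $H$-tree, pick Cahn-Hoffman vectors adapted to that branch rather than to the whole network. A secondary, more routine point that still has to be carried out carefully is the reduction ``compact perturbation $\leadsto$ spanning tree'': a competitor need not be a tree, may have loops and stray components touching $\p\Wulff_R$, and one has to check that pruning these and replacing sub-arcs by segments is admissible and does not raise the $\phi$-length.
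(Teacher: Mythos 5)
Your strategy is genuinely different from the paper's proof, which straightens the competitor into a tree with two triple junctions $T_1,T_2$ and then runs a five-case geometric analysis using Remark \ref{rem:ajoyib_tenglik} and the already-established minimality of conical critical triods. Your star case is correct ($0\in\sum_i\p\psi(P_i)$ is exactly the balance condition \eqref{eq:balance_condition}, which holds for the ``$\Psi$'' and ``$X$'' configurations), and the topological exclusion of the diagonal pairing is fine. The gap is in the $H$-case. For $G(Q_1,Q_2)=\psi(P_1-Q_1)+\psi(P_2-Q_1)+\psi(Q_1-Q_2)+\psi(P_3-Q_2)+\psi(P_4-Q_2)$ the subgradient inequality reads $G(Q_1,Q_2)\ge G(O,O)+(\eta-\xi_1-\xi_2)\cdot Q_1-(\eta+\xi_3+\xi_4)\cdot Q_2$ with $\xi_i\in\p\psi(P_i)$ and a \emph{single} $\eta\in\p\psi(0)$ coming from the bridge term; since $G$ is convex, $(O,O)$ is a global minimiser \emph{if and only if} one can choose $\xi_1+\xi_2=\eta=-(\xi_3+\xi_4)$ with $\eta\in\p\psi(0)$. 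Translated back, you need one \emph{balanced} quadruple $N_i\in\p\phi^o(\nu_{L_i})$ with $\sum_i N_i=0$ \emph{and} $\phi(N_1+N_2)\le 1$. Your stated condition --- independent per-branch choices with $\phi(N_i+N_{i+1})\le1$ that ``need not assemble into one CH field'' --- is strictly weaker and does not yield the bound: if $N_1+N_2\ne -(N_3+N_4)$ the linear minorant is unbounded below in $(Q_1,Q_2)$, and no conclusion follows.

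Concretely, your ``furthest endpoints'' recipe fails for the $\Psi$. Take rays at $0^o,60^o,180^o,300^o$ in cyclic order and the pairing $\{(P_1,P_2),(P_3,P_4)\}$: the first pair has adjacent facets and the recipe gives $N_1+N_2=(-\tfrac{1}{\sqrt3},1)$, a vertex of $\Wulff$, while the second pair has facets two apart and the recipe gives antipodal vertices with $N_3+N_4=0\ne-(N_1+N_2)$ (note also that for facets two apart the sum is the origin, not a vertex of $\Wulff$ as you assert). The resulting quadruple is not balanced and certifies nothing. The approach is nevertheless repairable, because the correct certificates do exist: in these coordinates the balance condition forces $N_2=(-\tfrac{2}{\sqrt3},0)$, $N_4=(\tfrac{2}{\sqrt3},0)$ and leaves $N_1=-N_3$ free on the top facet, and choosing $N_1=(\tfrac{1}{\sqrt3},1)$ gives $N_1+N_2=(-\tfrac{1}{\sqrt3},1)=-(N_3+N_4)\in\p\Wulff$; the other consecutive pairing and the ``$X$'' admit analogous choices. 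So the step you must actually carry out is: for each of the two admissible pairings in each of the two configurations, exhibit a balanced CH quadruple whose partial sum over one branch lies in $\Wulff$ --- a configuration-by-configuration check, not a consequence of pairwise statements. (The reduction of a general compact perturbation to an embedded spanning tree, and the degenerate tree shapes where internal nodes collide with terminals, also deserve a few explicit lines, but those points are routine.)
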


\begin{proof}
Up to a rotation we have only two possibilities, see Figure \ref{fig:quad_junc} (e) and (f). Since the ideas are the same, we only prove the minimality of $\sS$ in Figure \ref{fig:quad_junc} (f). Fix $R>0$, take a compact perturbation $\sA$ of $\sS$ in $\Wball_R$ and let $X_1,X_2,X_3,X_4\in\p \Wulff_R$ be the intersection points of the half-lines of $\sS$ with  $\p \Wulff_R.$ Since $\Wulff_R \cap \sA$ is connected, there exist curves $\Gamma_1,\Gamma_2\subset \Wulff_R \cap \sA$ connecting $X_1$ with $X_3$ and $X_2 $ with $X_4.$ Notice that if $\cH^1(\Gamma_1\cap \Gamma_2)=0$ (see Figure \ref{fig:persey} (a)), then using the additivity of the $\phi$-length and the minimality of segments we get
\begin{align*}
\ell_\phi(\sA,\Wball_R) \ge \ell_\phi(\Gamma_1\cup \Gamma_2)  = \ell_\phi(\Gamma_1) + \ell_\phi(\Gamma_2) 
\ge   \ell_\phi([X_1X_3]) + \ell_\phi([X_2X_4]) = \ell_\phi(\sS,\Wball_R).
\end{align*}
Hence, we may assume $\cH^1(\Gamma_1\cap \Gamma_2)>0$ (see Figure \ref{fig:persey} (b)). 
Let $T_1,T_2\in \Gamma_1\cap \Gamma_2$ be two points such that the subcurves $\Gamma_1'\subset\Gamma_1$ and $\Gamma_2'\subset\Gamma_2$ connecting $T_1$ and $T_2$ are maximal. Notice that $T_1$ and $T_2$ may coincide. These two points divide $\Gamma_1$ and $\Gamma_2$ into three parts: $\Gamma_1$ is divided into  $(X_1,T_1),$ $(T_1,T_2)$ and $(T_2,X_3),$ and $\Gamma_2$ is divided into $(X_2,T_1),$ $(T_1,T_2)$ and $(T_2,X_4).$ Notice that by maximality the subcurves ending at $X_i$ have disjoint relative interiors.

By the minimality of segments and assumption $\cH^1(\Gamma_1\cap \Gamma_2)>0,$ we may replace those subcurves with segments with the same endpoints so that $T_1,T_2$ are two triple junctions of segments so that $T_1\ne T_2$ (see Figures \ref{fig:persey} (b) and \ref{fig:four_quado}); notice that unlike the subcurves, interiors of those segments may intersect.

Assume first $T_1$ is a triple junction of segments $[X_1T_1],$ $[X_4T_1]$ and $[T_1T_2]$, and $T_2$ is a triple junction of segments $[X_2T_2],$ $[X_3T_2]$ and $[T_1T_2]$ as in Figure \ref{fig:persey} (b). In this case by the minimality of segments 
\begin{equation}\label{ujzaert}
\ell_\phi(\sA,\Wball_R) \ge \ell_\phi([X_1T_1X_4]) + \ell_\phi([X_2T_2X_3]) \ge \ell_\phi([X_1X_4]) + \ell_\phi([X_2X_3]).  
\end{equation}
Since both sides of the triangles $X_1OX_4$ and $X_2OX_3$ ending at $O$ are parallel to adjacent facets of $\Wulff,$ by Remark \ref{rem:ajoyib_tenglik} (a) we have 
$$
\ell_\phi([X_1X_4]) = \ell_\phi([OX_1])+\ell_\phi([OX_4])
\quad \text{and}\quad 
\ell_\phi([X_2X_3]) = \ell_\phi([OX_2])+\ell_\phi([OX_3]).
$$
Thus, placing these equalities into \eqref{ujzaert} we get
$$
\ell_\phi(\sA,\Wball_R) 
\ge \sum\limits_{i=1}^4 \ell_\phi([OX_i]) = \ell_\phi(\sS,\Wball_R).
$$

\begin{figure}[htp!]
\includegraphics[width=\textwidth]{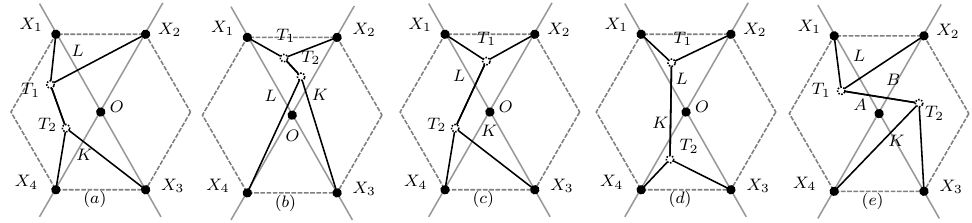}
\caption{\small Some possible locations of $T_1$ and $T_2.$}
\label{fig:four_quado}
\end{figure}

Now we turn to a more involved case, where $T_1$ is a triple junction of segments $[X_1T_1],$ $[X_2T_1]$ and $[T_1T_2]$ and $T_2$ is a triple junction of $[X_4T_2],$ $[X_3T_2]$ and $[T_1T_2].$ 
According to the location $T_1$ and $T_2$, as well as to the symmetry of $\sS,$ we have the following five possibilities.
\smallskip

{\it Case 1:} $T_1$ and $T_2$ belong to the left parallelogram with three vertices at $X_1,$ $O$ and $X_4,$ see Figure \ref{fig:four_quado} (a). Let $\{L\}:=[OX_1]\cap [T_1X_2]$ and $\{K\}:=[OX_4]\cap [T_2X_3].$ By the minimality of segments and Remark \ref{rem:ajoyib_tenglik} (a)
$$
\ell_\phi([X_1T_1T_2X_4]) \ge \ell_\phi([X_1X_4]) = \ell_\phi([X_1OX_4]) = \ell_\phi([X_1O]) + \ell_\phi([OX_4]),
$$
where $[ABC\ldots]$ is the polygonal curve made of segments $[AB],[BC],\ldots$. On the other hand, by Remark \ref{rem:ajoyib_tenglik} (c) and the monotonicity of the $\phi$-length,  
$$
\ell_\phi([T_1X_2]) \ge \ell_\phi([LX_2]) = \ell_\phi([OX_2]),
\quad 
\ell_\phi([T_2X_3]) \ge \ell_\phi([KX_3]) = \ell_\phi([OX_3]).
$$ 
Therefore,
$$
\ell_\phi(\sA,\Wball_R) \ge \ell_\phi([X_1T_1T_2X_4]) +\ell_\phi([T_1X_2]) + \ell_\phi([T_2X_3]) \ge \sum\limits_{i=1}^4 \ell_\phi([OX_i]) = \ell_\phi(\sS,\Wball_R).
$$

{\it Case 2:} $T_1$ and $T_2$ lie in the upper triangle $X_1OX_2,$ see Figure \ref{fig:four_quado} (b). Let $\{L\}: = [T_2X_4]\cap [X_1X_3]$ and $\{K\}: = [T_2X_3]\cap [X_4X_2].$ By Remark \ref{rem:ajoyib_tenglik} (a) applied with the triangles $T_2KO$ and $KOX_4$,
$$
\ell_\phi([T_2X_4]) = \ell_\phi([T_2L]) + \ell_\phi([LOX_4]) = \ell_\phi([T_2LO]) + \ell_\phi([OX_4]).
$$
Similarly, 
$$
\ell_\phi([T_2X_3]) = \ell_\phi([T_2K]) + \ell_\phi([KOX_3]) = \ell_\phi(T_2KO) + \ell_\phi(OX_3).
$$
Next, by the minimality of segments, 
$
\ell_\phi([T_1T_2LO]) \ge \ell_\phi([T_1O]).
$
Thus,
\begin{align*}
\ell_\phi([T_2X_4]) + \ell_\phi([T_2X_3]) + \ell_\phi([T_1T_2]) \ge & \ell_\phi(T_1T_2LO) + \ell_\phi(OX_4)+\ell_\phi(OX_3)\\
\ge & \ell_\phi(T_1O) + \ell_\phi(OX_4)+\ell_\phi(OX_3).
\end{align*}
Consider the network $\sB$ consisting of segments $[X_1T_1],$ $[X_2T_1],$ $[T_1O]$ and $[OX_4].$ 
Clearly, it is a compact perturbation of the conical triod $\sC$ consisting of the three half-lines starting from the origin and passing through $X_1,$ $X_2$ and $X_4,$ respectively. By Proposition \ref{teo:phi_minimal_triods} $\sC$ is minimal, and thus, 
$
\ell_\phi(\sB,\Wball_R) \ge \ell_\phi(\sC,\Wball_R).
$
Equivalently,
$$
\ell_\phi([X_1T_1]) + \ell_\phi([X_2T_1]) + \ell_\phi([T_1O]) \ge \ell_\phi([OX_1]) + \ell_\phi([OX_2]).
$$
Then 
\begin{align*}
\ell_\phi(\sA,\Wball_R) \ge & \ell_\phi([T_2X_4]) + \ell_\phi([T_2X_3]) + 
\ell_\phi([X_1T_1]) + \ell_\phi([X_2T_1]) + \ell_\phi([T_1T_2]) \\
\ge  & \ell_\phi([X_1T_1])
+ \ell_\phi([X_2T_1]) + \ell_\phi(T_1O)
+ \ell_\phi(OX_4) + \ell_\phi(OX_3) \\
\ge & \sum\limits_{i=1}^4\ell_\phi([OX_i])  = \ell_\phi(\sS,\Wball_R).
\end{align*}

{\it Case 3:} $T_1$ lies in the upper triangle $X_1OX_2$ and $T_2$ lies in the left parallelogram with three vertices $X_1,O,X_4$, Figure \ref{fig:four_quado} (c). Let $\{L\}:=[OX_1]\cap [T_1T_2]$ and $\{K\}:=[OX_4]\cap [T_2X_3].$ Then 
$$
\ell_\phi([T_1T_2X_4]) \ge \ell_\phi([T_1L]) + \ell_\phi([LX_4]) = \ell_\phi([T_1LO]) + \ell_\phi([OX_4]) \ge \ell_\phi([T_1O])+\ell_\phi([OX_4]),
$$
and
$$
\ell_\phi([T_2X_3]) \ge \ell_\phi([KX_3])=\ell_\phi([OX_3]).
$$ 
Moreover, as in case 2 
$$
\ell_\phi([X_1T_1]) + \ell_\phi([X_2T_1]) + \ell_\phi([T_1O]) \ge \ell_\phi([OX_1]) + \ell_\phi([OX_2]).
$$
Summing these  inequalities we get 
$$
\ell_\phi(\sA,\Wball_R) \ge \sum\limits_{i=1}^4\ell_\phi([OX_i]) = \ell_\phi(\sS,\Wball_R).
$$

{\it Case 4:} $T_1$ lies in the upper triangle  $X_1OX_2$ and $T_2$ lies in the lower triangle $X_4OX_3$, Figure \ref{fig:four_quado} (d). Let $L\in [X_1OX_2]\cap [T_1T_2]$ and $K\in [X_4OX_3]\cap [T_1T_2].$ By Remark \ref{rem:ajoyib_tenglik} (a) 
$$
\ell_\phi([T_1T_2]) = \ell_\phi([T_1LOKT_2]) \ge \ell_\phi([T_1O]) + \ell_\phi([T_2O]).
$$
Now applying the minimality of conical critical triods as in case 2 we get 
$$
\ell_\phi([X_1T_1]) + \ell_\phi([X_2T_1]) + \ell_\phi([OT_1]) \ge \ell_\phi([OX_1]) + \ell_\phi([OX_2]) 
$$
and 
$$
\ell_\phi([X_4T_2]) + \ell_\phi([X_3T_2]) + \ell_\phi([OT_2]) \ge \ell_\phi([OX_3]) + \ell_\phi([OX_4]). 
$$
Summing these inequalities we get $\ell_\phi(\sA,\Wball_R) \ge \ell_\phi(\sS,\Wball_R).$
\smallskip

{\it Case 5:} $T_1$ lies in the left parallelogram and $T_2$ lies in the right parallelogram, Figure \ref{fig:four_quado} (e). Let $A\in [T_1T_2]\cap [X_1X_3],$ $B\in [T_1T_2]\cap [X_2X_4],$ $L\in [OX_1]\cap [T_1X_2]$ and $K\in [OX_3]\cap [T_2X_4].$ Then 
$$
\ell_\phi([T_1X_2]) \ge \ell_\phi([LX_2]) = \ell_\phi([OX_2]),\quad \ell_\phi([T_2X_4]) \ge \ell_\phi([KX_4]) = \ell_\phi([OX_4])
$$
and
$$
\ell_\phi([X_1T_1T_2X_3]) \ge\ell_\phi([X_1X_3]).
$$
Summing these inequalities we get $\ell_\phi(\sA,\Wball_R) \ge \ell_\phi(\sS,\Wball_R).$
\end{proof}

\begin{remark}\label{rem:not_calibrated_eshshak}
In the case of a quadruple junction we cannot produce a paired calibration made of constant fields. Indeed, let half-lines $L_1,\ldots,L_4$ start from the origin and parallel to the facets of $\p \Wulff$ as in Figure \ref{fig:quad_junc} (e)-(f), and let $N\in\cahnhofman(\sS).$
As observed in Example \ref{exa:four_networks}, at quadruple junction we have $N_1+N_3=0=N_2+N_4$ and additionally, $N_1=-N_3=(-\frac{2}{\sqrt3},0)$ for Figure \ref{fig:quad_junc} (e). 
Suppose we can choose vectors $\xi_i\in\R^2$, $i=1,\dots,4$, such that $\phi(\xi_i-\xi_j)\le 1$ for all $i,j$ and
$
N_i=\xi_i-\xi_{i+1},
$
with $\xi_{5} = \xi_1.$ In case of ``X'' (Figure \ref{fig:quad_junc} (f)), $N_1$ and $N_2$ lie in the facets of $\Wulff$ with common vertex $(-\frac{2}{\sqrt3},0),$ and thus, $N_1+N_2\in \Wulff$ if and only if $N_1=(-\frac{1}{\sqrt3},-1)$ and $N_2=(-\frac{1}{\sqrt3},1).$ However, in this case, $N_1+N_4 = (0,2)\notin \Wulff,$ i.e., $\phi(\xi_4-\xi_2) = \phi(N_1+N_4)>1.$ A similar reasoning applies in  case of ``$\Psi$'' (Figure \ref{fig:quad_junc} (e)). 
\end{remark}

Applying Remark \ref{rem:ajoyib_tenglik} (a) and (c) we immediately find that conical minimal networks $\sS$ with a quadruple junction are not ``isolated'': in fact, in an arbitrarily small neighborhood\footnote{Differently from the network in Figure \ref{fig:non_minimal}, where the perturbation is not ``small''.} of the quadruple junction, we can find a compact perturbation $\sA$ of $\sS$ having the same $\phi$-length as $\sS$ (see Figure \ref{fig:ostanocish}).
\begin{figure}[htp!]
\includegraphics[width=0.48\textwidth]{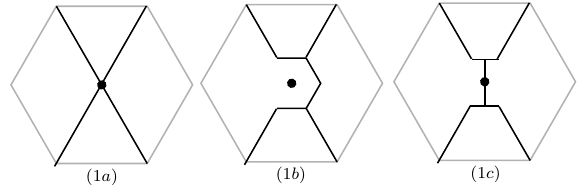}
\includegraphics[width=0.48\textwidth]{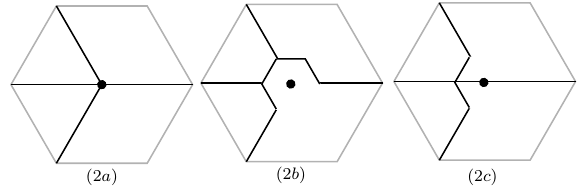}
\caption{\small }
\label{fig:ostanocish}
\end{figure}
Similar perturbations can be done for a $\phi$-regular conical minimal triods $\sS$ not satisfying  the $120^o$-condition. However, for a $\phi$-regular conical triod with the $120^o$-condition the following condition holds: if $\sA$ is any compact perturbation of $\sS$ in $D_R$ with a triple junction different from $\sS,$ then $\ell_\phi(\sS,D_R) < \ell_\phi(\sA,D_R).$

\subsection{$\phi$-curvature of an admissible  network}
In what follows we work only with polygonal admissible networks. As we have seen after Definition \ref{def:phi_regular_networks_and_CH_fields}, conical triods as in Figure \ref{fig:quad_junc} (a) do not admit a CH field (no balance condition at the triple junction), and hence, not every admissible network is $\phi$-regular. The following lemma shows that without such ``noncritical'' triple junctions any admissible network is $\phi$-regular. 

\begin{theorem}[\textbf{Existence of a minimal CH field}]
\label{teo:existence_of_a_minimal_CH_field}
Let $\sS = \cup_{i=1}^n S_i$ be a polygonal admissible network whose triple junctions are locally contained in conical critical triods. Then $\sS$ is $\phi$-regular. Moreover, there exists a minimizer $\Nmin$ of the problem
\begin{equation}\label{min_problem91}
\min\limits_{N\in\cahnhofman(\sS)} \quad \sum\limits_{i=1}^n
\int_{S_i} 
\Big[
{\tandiv N}^{}_{\vert_{S_i}}
\Big]^2 \phi^o(\nu_{S_i}) 
\,d\cH^1,
\end{equation}
and for any $S_i$ the number ${\tandiv \Nmin}^{}_{\vert_{S_i}}$ does not depend on $\Nmin$.
\end{theorem}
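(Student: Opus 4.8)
The plan is to reduce the minimization \eqref{min_problem91} to a finite-dimensional problem over the (finite-dimensional, convex, compact) admissible set of CH fields, and then to argue uniqueness of the tangential divergence by a strict-convexity-plus-linearity argument. First I would show that $\cahnhofman(\sS)$ is nonempty; this is where the hypothesis that every triple junction is locally contained in a conical critical triod is used. Along each segment/half-line $S_i$ the set of admissible values $\partial\phi^o(\nu_{S_i})$ is either the single vertex of $\Wulff$ dual to the facet of $\Frank$ containing $\nufi$ (when $\nu_{S_i}$ lies in the interior of a facet of $\Frank$, which is the case here since $S_i$ is parallel to a facet of $\Wulff$), so in fact the value of any CH field is forced to be the unique closest vertex $N_i$ of $\Wulff$ at interior points of $S_i$, and is only free to vary (continuously, by the Lipschitz requirement) near the vertices of $\sS$. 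At each simple ($120^o$) vertex, two forced vertex-values of adjacent segments are compatible with a Lipschitz interpolation because the two vertices of $\Wulff$ are the same (the $120^o$-condition means the two facets of $\Frank$ share the dual vertex of $\Wulff$) — so no constraint there; at each triple junction the local conical critical triod supplies, via Lemma \ref{lem:conical_critical_networks} and Theorem \ref{teo:phi_minimal_triods}'s setup, constant vectors $N_{i_1},N_{i_2},N_{i_3}\in\partial\Wulff$ with $\sum(-1)^{\exponent_j}N_{i_j}=0$, and these are exactly the forced interior values, so the balance condition \eqref{eq:balance_condition} is automatically satisfiable. Hence $\cahnhofman(\sS)\ne\emptyset$, i.e., $\sS$ is $\phi$-regular.

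Next I would set up the variational problem. Fix on each $S_i$ an orientation and a unit-speed (or affine) parametrization; a Lipschitz CH field $N$ is then determined by its restriction $N_i$ to $S_i$, which is a Lipschitz curve on $\partial\Wulff$ with prescribed boundary values at simple vertices (the forced closest vertex) and prescribed-up-to-a-finite-dimensional-parameter values at junctions, subject to the linear balance constraints. The functional in \eqref{min_problem91} is $\sum_i \int_{S_i}[\tandiv N_i]^2\,\phi^o(\nu_{S_i})\,d\cH^1$; since $\phi^o(\nu_{S_i})$ is a positive constant on each $S_i$ and $\tandiv N_i = (N_i\circ\gamma_i)'\cdot\gamma_i'/|\gamma_i'|^2$, this is (up to positive constants) $\sum_i \int |(N_i\circ\gamma_i)'\cdot e_i|^2$ for the appropriate direction $e_i$. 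Existence of a minimizer is the direct method: take a minimizing sequence, use the a priori bound to get equi-Lipschitz (or at least $H^1$) estimates on the $N_i$ — note the $N_i$ take values in the compact set $\partial\Wulff$, so they are uniformly bounded, and a bound on $\int[\tandiv N_i]^2$ together with fixed endpoint data bounds them in $H^1$ — then extract a subsequence converging uniformly (Arzelà–Ascoli after noting $H^1\hookrightarrow C^{0}$ in one dimension) and weakly in $H^1$; the constraints $N_i(x)\in\partial\Wulff$, the boundary values, and the (closed, linear) balance conditions pass to the limit, and the functional is weakly lower semicontinuous (it is convex in the derivative), so the limit is a minimizer $\Nmin$.

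Finally, the uniqueness-of-divergence claim. This is the key step and, I expect, the main subtlety. The functional $N\mapsto \sum_i\int_{S_i}[\tandiv N_i]^2\phi^o(\nu_{S_i})\,d\cH^1$ is convex but not strictly convex (it depends only on the tangential component of $N_i'$), so two distinct minimizers can exist; the point is that the \emph{tangential divergence} $\tandiv \Nmin|_{S_i}$ is nonetheless the same for all minimizers. The argument: if $\Nmin$ and $\tilde N$ are both minimizers, then by convexity so is $\tfrac12(\Nmin+\tilde N)$, and strict convexity of $t\mapsto t^2$ forces, for each $i$, $\tandiv \Nmin|_{S_i} = \tandiv \tilde N|_{S_i}$ at $\cH^1$-a.e. point of $S_i$ — because equality in the convexity estimate $\big(\tfrac{a+b}{2}\big)^2\le\tfrac12 a^2+\tfrac12 b^2$ holds only when $a=b$, and this must hold pointwise a.e. after integrating against the positive weight $\phi^o(\nu_{S_i})$. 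Then I would observe that, since $S_i$ is parallel to a facet of $\Wulff$ and the only freedom is near vertices, $\tandiv N_i$ is actually constant on each $S_i$ for \emph{any} $N\in\cahnhofman(\sS)$ that is affine in the parameter — but more robustly, one shows (using that the boundary data of $N_i$ at the two endpoints of $S_i$ are \emph{determined} by $\sS$ at simple vertices, and at junctions are constrained) that $\int_{S_i}\tandiv N_i\,d\cH^1 = (N_i(\text{end}) - N_i(\text{start}))\cdot \tau_{S_i}$ is itself fixed; combined with the a.e.-equality of the divergences of two minimizers this yields that the common value $\tandiv\Nmin|_{S_i}$ is determined. The one genuine obstacle I anticipate is bookkeeping the boundary data correctly: at a triple junction the individual $N_{i_j}(X)$ need \emph{not} be uniquely determined (only their balanced sum is, cf. Example \ref{exa:four_networks} — though for triple junctions inside critical triods they are in fact forced), so one must verify that whatever freedom remains in $\Nmin(X)$ does not affect $\tandiv\Nmin|_{S_i}$; this follows because along a segment parallel to a facet of $\Wulff$ the Lipschitz constraint $N_i\in\partial\Wulff$ together with the fixed value at the \emph{other} (simple) endpoint pins down the total increment, hence the average divergence, and the pointwise a.e. rigidity from strict convexity of the square upgrades "same average" to "same function".
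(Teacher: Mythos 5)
There is a genuine gap at the very start of your argument, and it propagates. You assert that, because $S_i$ is parallel to a facet of $\Wulff$, the normal $\nu_{S_i}$ lies in the interior of a facet of $\Frank$, so that $\p\dualnorm(\nu_{S_i})$ is a single vertex of $\Wulff$ and ``the value of any CH field is forced'' at interior points of $S_i$. This has the duality backwards: a facet of $\Wulff$ is dual to a \emph{vertex} of $\Frank$, so $\nufi$ is a vertex of $\Frank$ and $\p\dualnorm(\nu_{S_i})$ is the \emph{entire} facet of $\Wulff$ parallel to $S_i$. A CH field on $S_i$ is therefore an arbitrary Lipschitz map from $S_i$ into that facet; it is not constant in general, and its tangential divergence (the crystalline curvature) need not vanish. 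If your claim were true, the functional in \eqref{min_problem91} would vanish identically on $\cahnhofman(\sS)$ and every admissible network would be critical, contradicting, e.g., the curvature computations of Section \ref{subsec:comput_curvatures}. (The value \emph{is} forced at a simple $120^o$-vertex, where continuity pins $N$ to the common vertex of the two adjacent facets of $\Wulff$, but nowhere else; even at a triple junction of a critical triod one of the three values can be chosen freely on a facet, cf.\ Example \ref{exa:four_networks} and the example following Remark \ref{rem:prop_minimal_chn}.) Consequently your existence argument is not set up over the correct admissible class, and the concluding ``bookkeeping'' claim --- that the total increment $N_i(\mathrm{end})-N_i(\mathrm{start})$ is pinned down by the data --- is false whenever $S_i$ ends at a multiple junction.

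For the record, the paper's route is: construct one CH field by choosing balanced values at every vertex of $\sS$ (possible at triple junctions precisely because they sit inside critical triods) and interpolating linearly along segments, taking constants along half-lines; then, for existence, take a minimizing sequence, extract convergent endpoint values by compactness of $\p\Wulff$ and finiteness of the vertex set, and use Jensen's inequality to show that the linear interpolation of the limiting endpoint values attains the infimum. This reduces \eqref{min_problem91} to a finite-dimensional quadratic problem in the endpoint values and, in particular, shows that a minimizer may be taken affine (hence Lipschitz) on each segment --- a point your direct-method sketch in $H^1$ leaves open, since an $H^1$ minimizer need not a priori be a Lipschitz CH field. Your final strict-convexity argument for the independence of $\tandiv\Nmin$ on the minimizer is correct and coincides with the paper's.
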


\begin{proof}
As in \cite[Remark 4.7]{BKh:2023}, it is enough to determine a CH field $N$ at the endpoints of each $S_i$ and then linearly interpolate it inside $S_i.$ Therefore, given a segment/half-line $S_i,$ we first define a vector field $N_i:=N^{}_{\vert S_i}$ with $N_i\in \p \dualnorm(\nu_{S_i})$ at the endpoints of $S_i.$

Take any vertex $X$ of $\sS.$ Assume that $X$ is a common endpoint of only two segments/half-lines, say  $S_1$ and $S_2.$ The definition is standard: since the angle between $S_1$ and $S_2$ is $120^o,$ we define $N_1(X) = N_2(X)= V$ resp. $N_1(X) = N_2(X) =-V,$ where $V$ is the vertex of $\Wulff$ whose adjacent facets have outer resp. inner unit normals $\nu_{S_1}$ and $\nu_{S_2}.$ 

Next, suppose $\triplejunction$ is a triple junction, say a common endpoint of three segments/half-lines $S_1,S_2$ and $S_3.$ Recall that, up to a rotation and a mirror reflection, we have only two critical triple junctions (see Figure \ref{fig:quad_junc} (b) and (c)). In either case we can choose three vectors $N_1(\triplejunction),$ 
$N_2(\triplejunction),$ $N_3(\triplejunction)$ satisfying $N_i(\triplejunction) \in \p\dualnorm(\nu_{S_i})$ such that 
$$
\sum\limits_{i=1}^3 (-1)^{\exponent_i} N_i(\triplejunction) = 0,
$$
where $\exponent_i = 0$ if $S_i$ oriented from $\triplejunction,$ and $\exponent_i=1$ otherwise. 

Similarly, if $X$ is an $\degree$-junction of, say, $S_1,\ldots, S_\degree$ for some $\degree=4,5,6$, then the vectors $N_i(X)\in \p\dualnorm(\nu_{S_i})$ and numbers $\exponent_i\in\{0,1\}$ for $i=1,\ldots,m$ with 
$$
\sum\limits_{i=1}^\degree (-1)^{\exponent_i} N_i(X) = 0
$$
can be defined, for instance, as in Figure \ref{fig:quad_junc} (d)-(h). We omit the details. 

Now we extend $N_i$ to the relative interior of $S_i.$ If $S_i$ is a half-line starting from a vertex $X$ of $\sS,$ we define 
$$
N_i(Z):=N_i(X)\quad \text{for all $Z\in S_i.$}
$$
If $S_i=[XY]$ is a segment, then we extend $N_i$ linearly inside $S_i$ as 
$$
N_i(Z):= N_i(X) + \lambda [N_i(Y) - N_i(X)],\quad Z:=X + \cH^1(S_i) 
\lambda \tau_{S_i},\quad \lambda\in[0,1],
$$
where $\tau_{S_i}$ is the tangent of $S_i$ -- the clockwise $90^o$-rotation of $\nu_{S_i}.$ Now defining $N^{}_{\vert_{S_i}} = N_i,$ we get $N\in \cahnhofman(\sS)$ and 
\begin{equation}\label{u_nego_bomba}
{\tandiv N}^{}_{\vert_{S_i}}
= 
\begin{cases}
0 & \text{if $S_i$ is a half-line,}\\[2mm]
\frac{[N_i(Y)- N_i(X)]\cdot \tau_{S_i}}{\cH^1(S_i)} & \text{if $S_i=[XY]$ is a segment.}
\end{cases}
\end{equation}
Thus, $\cahnhofman(\sS)$ is nonempty and so $\sS$ is $\phi$-regular.

Finally, we prove that the minimum problem \eqref{min_problem91} admits a solution. Let $(N^k)\subset\cahnhofman(\sS)$ be a minimizing sequence and consider the sequence $(N_i^k(X))_k$ at each endpoint $X$ of a segment/half-line $S_i$ in $\sS.$  Since $\p \Wulff$ is compact and the number of vertices of $\sS$ is finite, up to a (not relabelled) subsequence, $N_i^k(X)\to N_i(X)$ for some $N_i(X)\in \p \Wulff.$ Clearly, $N_i^k(X) = N_i(X)$ if $X$ is not a multiple junction, and letting $k\to+\infty$ in the equalities
$$
\sum\limits_{i} (-1)^{\exponent_i} N_i^k(X) = 0\quad\text{and}\quad N_i^k(X)\cdot \nu_{S_i} = \phi^o(\nu_{S_i}) 
$$
we obtain
$$
\sum\limits_{i} (-1)^{\exponent_i} N_i(X) = 0\quad\text{and}\quad N_i(X)\cdot \nu_{S_i} = \phi^o(\nu_{S_i})
$$
at multiple junctions. As above,  let us extend $N_i$ in the (relative) interior of $S_i,$ linearly interpolating its values at the endpoints, and denote by $\Nmin$ such an extension. Then denoting by $M$ the left-hand-side of \eqref{min_problem91} we get
\begin{align*}
M = & \lim\limits_{k\to+\infty}\,  \sum\limits_{i=1}^n \int_{S_i} 
\big[\tandiv N_i^k\big]^2 \phi^o(\nu_{S_i}) 
\,d\cH^1 
\\
\ge & \lim\limits_{k\to+\infty}\,  
\sum\limits_{\text{$S_i=[X_iY_i]$ segment}}  \phi^o(\nu_{S_i}) 
\cH^1(S_i)  \Big[\tfrac{1}{\cH^1(S_i)}\int_{S_i} \tandiv N_i^k\,d\cH^1 \Big]^2 
\\
= & 
\lim\limits_{k\to+\infty}\,\sum\limits_{\text{$S_i=[X_iY_i]$ segment}}  \phi^o(\nu_{S_i}) \tfrac{|N_i^k(Y_i) - N_i^k(X_i)|^2}{\cH^1(S_i)}\\ 
= & \sum\limits_{\text{$S_i=[X_iY_i]$ segment}} \phi^o(\nu_{S_i}) \tfrac{|N_i^0(X_i) - N_i^0(Y_i)|^2}{\cH^1(S_i)} \\
= & \sum\limits_{i=1}^n\int_{S_i} 
\big[\tandiv \Nmin\big]^2 \phi^o(\nu_{S_i}) 
\,d\cH^1,
\end{align*}
where in the first inequality we used the Jensen inequality, in the second equality the definition of tangential divergence and the fundamental theorem of calculus and in the last equality we used \eqref{u_nego_bomba}. This implies $\Nmin\in\cahnhofman(\sS)$ is a minimizer. 

The independence of $\tandiv \Nmin$ on $\Nmin$ follows from the strict convexity of the functional in \eqref{min_problem91} in the tangential divergence.
\end{proof}

\noindent
We call any minimizer $N$ of \eqref{min_problem91} a  \emph{minimal CH field} (of $\sS$).

\begin{remark}\label{rem:prop_minimal_chn}
Let $\sS=\cup_iS_i$ be as in Theorem \ref{teo:existence_of_a_minimal_CH_field} and $\Nmin$ be any minimal CH field. Then: 

\begin{itemize}
\item $\Nmin$ is uniquely defined at the simple vertices of $\sS$: if the simple vertex $X$ is a common endpoint of $S_i$ and $S_j,$ then $\Nmin(X)$ is defined as $V$ resp. $-V,$ where $V$ is the vertex of $\Wulff$ whose adjacent facets are parallel to $S_i$ and $S_j,$ and $\nu_{S_i}$ and $\nu_{S_j}$ are the outer resp. inner unit normals to $\Wulff$;

\item $\Nmin_i:=N_{\vert S_i}^0$ is constant on half-lines of $\sS$;

\item If $S_i$ is a segment, then $N_{\vert S_i}^0$ is linear on $S_i,$ and $\Nmin$ solves the minimum problem 
\begin{equation}\label{shapat_minimi}
\inf_{N\in\cahnhofman(\sS)}\,\sum\limits_{\text{$S_i=[X_iY_i]$ segment}} \phi^o(\nu_{S_i}) \tfrac{| N_i(Y_i) - N_i(X_i)|^2}{\cH^1(S_i)};
\end{equation}

\item If, as in \cite{BKh:2023},  we call the number 
$$
\kappafi_{S_i}:=\tandiv \Nmin_i
$$
the \emph{$\phi$-curvature} (or crystalline curvature) of $S_i$, then $\kappafi_{S_i}\nu_{S_i}$ is independent of the orientation of $S_i$ and the $\phi$-curvature of any half-line of $\sS$ is $0$. Furthermore, as in the two-phase case \cite{GP:2022}, if the network forms locally a convex set around the segment $S_i:=[X_iY_i],$ not ending at a multiple junction, and $\nu_{S_i}$ is directed ``inward'' resp. ``outward'' to that set, then 
$$
\kappafi_{S_i} = -\tfrac{|\Nmin(Y_i) - \Nmin(X_i)|}{\cH^1(S_i)}\quad\text{resp.}\quad 
\kappafi_{S_i} = \tfrac{|\Nmin(Y_i) - \Nmin(X_i)|}{\cH^1(S_i)}.
$$
When $\sS$ is not locally convex around $S_i,$ then $\kappafi_{S_i} = 0.$

\end{itemize}
\end{remark}

{}From now on, when we speak about the crystalline curvature of a network, we always assume that the triple junctions  are locally contained in conical critical triods.

\begin{example}
In general, \eqref{min_problem91} may have more than one minimizer. For instance, consider an admissible triod of half-lines $S_1,S_2,S_3$ meeting at $X$ at $120^o$-angles. Then $N_1(X)$ can be any vector in the facet of $\Wulff$ parallel to $S_1$ having the same unit normal. Clearly, $N_2(X)$ and $N_3(X)$ are uniquely chosen (satisfying the balance condition). Then the locally constant vector field $N_i:=N_i(X)$ is a minimal CH field. 
In particular, in this case there are infinitely many minimal CH fields. 
\end{example}

\begin{lemma}[\textbf{$\phi$-curvature-balance condition}]\label{lem:sum_curvature_balance}
Let $\sS$ be a polygonal admissible network containing at least one segment, $\triplejunction$ be a triple junction of three segments/half-lines $S_1,S_2,S_3$ of $\sS$ meeting at $120^o$-angles and suppose that there exists a minimal CH field whose values at $\triplejunction$ do not coincide with vertices of $\Wulff.$ Then 
\begin{equation}\label{sum_kurvacha0}
(-1)^{\exponent_1}\kappa_{S_1}^\phi + (-1)^{\exponent_2}\kappa_{S_2}^\phi 
+ (-1)^{\exponent_3}\kappa_{S_3}^\phi = 0,
\end{equation}
where $\exponent_i=0$ if $S_i$ is oriented from $X$ and $\exponent_i=1$ otherwise.
\end{lemma}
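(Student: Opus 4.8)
The plan is to exploit the linear interpolation structure of a minimal CH field together with the balance condition \eqref{eq:balance_condition} at the triple junction $\triplejunction$. First I would fix a minimal CH field $\Nmin$ of $\sS$ whose values at $\triplejunction$ are not vertices of $\Wulff$ (such a field exists by hypothesis). By Remark \ref{rem:prop_minimal_chn}, on each segment $S_i = [\triplejunction Y_i]$ the restriction $\Nmin_i$ is affine, so its tangential divergence is the constant
$$
\kappafi_{S_i} = \frac{[\Nmin_i(Y_i) - \Nmin_i(\triplejunction)]\cdot \tau_{S_i}}{\cH^1(S_i)},
$$
while on any half-line $S_i$ emanating from $\triplejunction$ we have $\kappafi_{S_i} = 0$ and $\Nmin_i$ is constant equal to $\Nmin_i(\triplejunction)$. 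The key point is that, because $\Nmin_i(\triplejunction)$ is not a vertex of $\Wulff$, it lies in the relative interior of a facet $\facet_i$ of $\p\Wulff$; the outward normal to $\Wulff$ along $\facet_i$ is exactly $\nu_{S_i}$ (up to sign fixed by orientation), and this facet is a segment orthogonal to $\nu_{S_i}$, hence parallel to $\tau_{S_i}$. Therefore, as long as $\Nmin_i$ stays on $\facet_i$ — which it does near $\triplejunction$, and in fact along all of $S_i$ if $S_i$ does not terminate at another multiple junction where the field could jump to a different facet — the increment $\Nmin_i(Y_i) - \Nmin_i(\triplejunction)$ is a vector parallel to $\tau_{S_i}$. (If $S_i$ is long and hits another junction, one works with a short initial subsegment; the curvature is constant along $S_i$ so this is harmless.)

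Next I would differentiate the balance condition along directions tangent to the respective facets. Concretely, pick a subsegment $[\triplejunction Z_i] \subseteq S_i$ on which $\Nmin_i$ remains in the relative interior of $\facet_i$. Writing $\Nmin_i(Z_i) = \Nmin_i(\triplejunction) + \cH^1([\triplejunction Z_i])\,\kappafi_{S_i}\,\tau_{S_i}$ (using that the affine increment has slope $\kappafi_{S_i}$ in the $\tau_{S_i}$ direction and zero component in the $\nu_{S_i}$ direction, since $\Nmin_i$ must stay on the line containing $\facet_i$), and summing with the signs $(-1)^{\exponent_j}$, the balance condition at $\triplejunction$ gives $\sum_j (-1)^{\exponent_j}\Nmin_j(\triplejunction) = 0$ for the constant terms, while the requirement that the perturbed vectors still satisfy (or are compatible with) the constraint structure forces the tangential increments to cancel. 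More directly: for the two critical triple-junction configurations (Figure \ref{fig:quad_junc} (b), (c)), the three facets $\facet_1,\facet_2,\facet_3$ of $\Wulff$ are the three that contain the vertices $\pm N_1(\triplejunction)$ etc.; their directions $\tau_{S_1},\tau_{S_2},\tau_{S_3}$ satisfy a linear relation $(-1)^{\exponent_1}\tau_{S_1} \pm (-1)^{\exponent_2}\tau_{S_2} \pm (-1)^{\exponent_3}\tau_{S_3} = 0$ coming from the $120^\circ$ geometry (the three facets of a regular hexagon taken with alternating orientation sum to zero). Pairing this with the fact that each increment lives on its own facet, and that the balance must persist to first order as one moves off $\triplejunction$, yields $\sum_j (-1)^{\exponent_j}\kappafi_{S_j}\tau_{S_j}$ parallel to... — and then a short computation identifies the scalar relation \eqref{sum_kurvacha0}.

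Let me describe the cleanest route for the final step, which I expect to be the main obstacle: showing the scalar coefficients, not merely the vectors, satisfy \eqref{sum_kurvacha0}. I would argue as follows. Consider the three facets $\facet_1, \facet_2, \facet_3$, each a translate of a segment through the origin; write $\facet_j \subset \{x : x\cdot \nu_{S_j} = \phi^o(\nu_{S_j})\}$ (with the orientation making $\exponent_j = 0$). The balance condition, differentiated, means: the map $t \mapsto \sum_j (-1)^{\exponent_j}\big(\Nmin_j(\triplejunction) + t\,\kappafi_{S_j}\tau_{S_j}\big)$ — which represents the balance of the CH field at the image of a point moving at unit speed into each branch — need not vanish for $t \ne 0$, but the point is instead to use the minimality/interpolation identity directly. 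Actually the robust argument is: all three vertices $(-1)^{\exponent_j}N_j$ of $\Wulff$ involved are distinct (by the non-vertex hypothesis the $\Nmin_j(\triplejunction)$ are interior points of distinct facets, so the facets are distinct), the three facets of the regular hexagon $\p\Wulff$ taken in cyclic order with alternating signs sum to zero as vectors, i.e. $\tau_{S_1}\cH^1(\facet_1) $-type relations hold, and since all facets of the regular hexagon have equal length and $\phi^o(\nu_{S_j})$ is the same constant for all three (the hexagon is regular), dividing out common factors converts the vector identity $\sum_j(-1)^{\exponent_j}\kappafi_{S_j}\tau_{S_j} = 0$ (which follows because the vectors $\tau_{S_j}$ span only a line's worth of freedom compatible with the constraint) into the scalar identity \eqref{sum_kurvacha0}. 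The genuine difficulty is bookkeeping the signs $\exponent_j$ and the orientations of the three facets so that the hexagon's "alternating sum of facet vectors is zero" is applied with the correct signs; I would handle this by checking the two canonical configurations of Figure \ref{fig:quad_junc} (b) and (c) explicitly, since every critical triple junction is a rotation/reflection of one of these, and rotations and reflections preserve both the balance condition and \eqref{sum_kurvacha0}.
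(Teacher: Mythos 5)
There is a genuine gap here, and it sits exactly where you predicted: the final step. Your setup is fine (the increment of $\Nmin_i$ along $S_i$ stays on the facet of $\p\Wulff$ exposed by $\nu_{S_i}$, hence is parallel to $\tau_{S_i}$, and $\kappafi_{S_i}$ is its signed rate), but the argument you then sketch never actually uses the \emph{minimality} of the CH field --- only the balance condition at $\triplejunction$ and affineness along the segments. That cannot suffice: for a generic (non-minimal) CH field one can prescribe arbitrary admissible values at $\triplejunction$ and at the other endpoints, producing arbitrary tangential divergences, so no identity like \eqref{sum_kurvacha0} can follow from those ingredients alone. Moreover, the vector identity $\sum_j(-1)^{\exponent_j}\kappafi_{S_j}\tau_{S_j}=0$ you aim for is actually \emph{false} in general: with all three branches oriented from $\triplejunction$ one has $\tau_{S_1}+\tau_{S_2}+\tau_{S_3}=0$, so that vector identity would force $\kappafi_{S_1}=\kappafi_{S_2}=\kappafi_{S_3}$, not a zero sum; and in the case of two segments and one half-line (where $\kappafi_{S_1}=-\kappafi_{S_2}\neq 0$ and $\kappafi_{S_3}=0$) the vector sum equals $\kappafi_{S_1}(\tau_{S_1}-\tau_{S_2})\neq 0$. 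The "balance must persist to first order as one moves off $\triplejunction$" heuristic has no basis in the definitions --- \eqref{eq:balance_condition} is imposed only at the junction.

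The missing idea is the interior first-order optimality condition for the minimization \eqref{min_problem91} (equivalently \eqref{shapat_minimi}), which is precisely where the hypothesis that $\Nmin(\triplejunction)$ avoids the vertices of $\Wulff$ enters. The paper parametrizes the admissible values of the CH field at $\triplejunction$ by a single scalar $x=|\bar N_1(\triplejunction)-V_1|\in[0,\tfrac{2}{\sqrt3}]$ (the hexagon's symmetry forces the distances of all three values to their nearest vertices to equal the same $x$, see \cite[Lemma 2.16]{BCN:2006} as cited in Section \ref{subsec:comput_curvatures}), reduces the minimization over this junction to minimizing $f(x)=\sum_i (x-a_i)^2/\cH^1(S_i)$ over the segments $S_i$ ending at $\triplejunction$, and observes that the non-vertex hypothesis means the minimizer $x^0$ lies in the open interval, so $f'(x^0)=0$. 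Unwinding $f'(x^0)=0$ term by term gives exactly $\sum_i(-1)^{\exponent_i}\kappafi_{S_i}=0$ (half-lines contribute nothing since their curvature vanishes). In short: \eqref{sum_kurvacha0} is the Euler--Lagrange equation of the quadratic problem defining $\Nmin$ at an interior point of the constraint, not a consequence of differentiating the balance condition.
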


\begin{proof}
Let  $\Nmin$ be a minimal CH field as in the statement. Without loss of generality we assume that $S_1,S_2,S_3$ are oriented from $\triplejunction$ so that 
\begin{equation*}
\Nmin_1(\triplejunction) + \Nmin_2(\triplejunction) + \Nmin_3(\triplejunction)=0. 
\end{equation*}
Since $\sS$ contains at least one segment and $\triplejunction$ is a triple junction, at least one $S_i$, say  $S_1,$ is a segment $[\triplejunction Y_1]$. First assume that both $S_2$ and $S_3$ are half-lines, and in this case we define a CH field $N$ on $\sS$ as follows: we set $N=\Nmin$ on all segments/half-lines $S_i$ of $\sS$ with $i\ge4$ (if any), and $N_i=N_{\vert S_i}$ is constant on $S_1,S_2,S_3$ with $N_1= \Nmin_1(Y_1)$ and $N_2$ and $N_3$ are unique constant vectors satisfying $\Nmin_1(Y_1) + N_2+N_3=0.$ Obviously, $N$ is also a CH field minimizing  the functional in \eqref{min_problem91}, and by the uniqueness of the tangential divergence of the minimizers, $\kappafi_{S_i}=0$ for $i=1,2,3.$ Hence, \eqref{sum_kurvacha0} holds.

Now, assume $S_2=[\triplejunction Y_2]$ is a segment and $S_3$ is a half-line. Clearly, $\kappa_{S_3}^\phi=0.$ Let $\bar N\in\cahnhofman(\sS)$ be any CH field such that $\bar N=\Nmin$ on all $S_i$ with $i\ge4$. Let $V_1$ be a vertex of $\Wulff$ closest to $N_1(\triplejunction),$ and $V_2,V_3$ be other two vertices directed as $N_2(\triplejunction)$ and $N_3(\triplejunction),$ basically obtained rotating $V_1$ by $\pm120^o.$ Let us define 
$$
x:=|\bar N_1(\triplejunction)-V_1|=|\bar N_2(\triplejunction)-V_2|\in[0,\tfrac{2}{\sqrt3}],
\quad a_1:=|\bar N_1(Y_1)-V_1|,\quad a_2:=|\bar N_2(Y_2)-V_2|.
$$
Then by the minimality of $\Nmin$ (see also the proof of Lemma \ref{lem:graph_triple} below) $x=x^0:=|\Nmin_1(\triplejunction)-V_1|$ is a minimizer of the function 
$$
f(x):=\frac{(x-a_1)^2}{\cH^1(S_1)}+\frac{(x-a_2)^2}{\cH^1(S_2)}.
$$
By assumption $\Nmin_1(\triplejunction)$ does not coincide with vertices of $\Wulff,$ and thus, $x^0\in(0,\tfrac{2}{\sqrt3}).$ Therefore, it is an interior critical point of $f,$ i.e., 
$$
f'(x^0) = \frac{2(x^0-a_1)}{\cH^1(S_1)}+\frac{2(x^0-a_2)}{\cH^1(S_2)} = 0.
$$
Now observing $\frac{x^0-a_1}{\cH^1(S_1)} = -\frac{(\Nmin_1(Y_1)- \Nmin_1(\triplejunction)]\cdot \tau_{S_1}}{\cH^1(S_1)} = -\kappa_{S_1}^\phi$ and $\frac{x^0-a_2}{\cH^2(S_2)} = -\frac{(\Nmin_2(Y_2)- \Nmin_2(\triplejunction)]\cdot \tau_{S_2}}{\cH^2(S_2)} = -\kappa_{S_2}^\phi$ (for the first equality use that $\Nmin_1(\triplejunction),V_1,\Nmin_1(Y_1)$  lie on the same facet of $\Wulff$ and for the second one, see \eqref{u_nego_bomba}), we deduce 
$$
\kappa_{S_1}^\phi+\kappa_{S_2}^\phi+\kappa_{S_3}^\phi=\kappa_{S_1}^\phi+\kappa_{S_2}^\phi=0.
$$

The case when all $S_1, S_2, S_3$ are segments is treated similarly.
\end{proof}

\subsection{Computation of $\phi$-curvature}\label{subsec:comput_curvatures}

\begin{wrapfigure}[13]{l}{0.55\textwidth}
\vspace*{-3mm}
\includegraphics[width=0.54\textwidth]{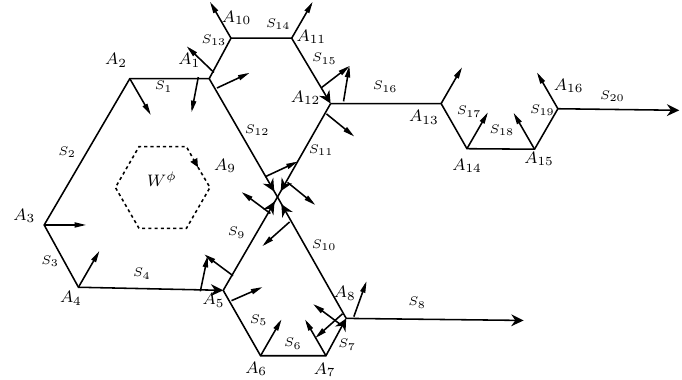} 
\caption{\small}
\label{fig:curva_compa}
\end{wrapfigure}
As an example, let us compute the $\phi$-curvature of the segments of the network $\sS$ in Figure \ref{fig:curva_compa}, where arrows at the endpoints of the polygonal curves show the orientation. Given $N\in\cahnhofman(\sS),$ write as usual $N_i:=N^{}_{\vert S_i}.$ As mentioned in Remark \ref{rem:prop_minimal_chn}, the $\phi$-curvature of the half-lines $S_{20}$ and $S_8$ are $0.$

First, consider the segment $S_2:=[A_2A_3].$ This segment does not end at any multiple junction, $N_2(A_2):=(\frac{1}{\sqrt3},-1)$ and $N_2(A_3):=(\frac{2}{\sqrt3},0).$ The curve $[A_1A_2A_3A_4]$ (and hence $\sS$) is locally convex around $S_2.$ As shown in Figure \ref{fig:curva_compa}, the unit normal of $S_2$ is directed ``inward'', and hence, by Remark \ref{rem:prop_minimal_chn} the $\phi$-curvature of $S_2$ is negative and is equal to $-\frac{|N_2(A_3) - N_2(A_2)|}{\cH^1(S_2)} = -\frac{2}{\sqrt3\cH^1(S_2)},$ where $\frac{2}{\sqrt3}$ is the sidelength of $\Wulff.$
Similarly, $\kappafi_{S} = -\frac{2}{\sqrt3\cH^1(S)}$ for $S\in \{S_3,S_6,S_{18}\}.$ On the other hand, the curve $[A_1A_{10}A_{11}A_{12}]$ (and hence $\sS$) is locally convex around $S_{14}=[A_{10}A_{11}]$ and the unit normal of $S_{14}$ is directed ``outward''. Thus, the $\phi$-curvature of this segment is positive and equals to $\frac{2}{\sqrt3\cH^1(S_{14})}.$ Notice that $\sS$ is not locally convex around $S_{17}$ and $S_{19},$ and therefore, the $\phi$-curvature of these segments is zero.

Now consider the segments ending at multiple junctions, for instance, at the triple junction $A_1.$ Let
$$
x_1:=|N_1(A_2) - N_1(A_1)|\in [0,\tfrac{2}{\sqrt3}].
$$
In view of \cite[Lemma 2.16]{BCN:2006}, given $N_1(A_1)$ we can uniquely define $N_{13}(A_1)$ and  $N_{12}(A_1)$ to fulfill the balance condition
$$
N_1 + N_{13}(A_1) + N_{12}(A_1) = 0,
$$
the signs ``+'' are chosen because of the orientations of $S_1,S_{13}$ and $S_{12}$ with respect to $A_1.$ Then using the symmetry of $\Wulff$ one can readily check that 
$$
\big|N_{13}(A_1)- (-1,0)\big| = 
\big|N_{12}(A_1)- (1,0)\big| = x_1.
$$
In particular, 
$$
|N_{13}(A_{10}) - N_{13}(A_1)| = \tfrac{2}{\sqrt3} - x_1.
$$
Next consider the triple junction $A_{12}.$ Setting 
$$
x_2:=|N_{15}(A_{12}) - (1,0)|=|N_{16}(A_{12})-(\tfrac{1}{\sqrt3},1)| = |N_{11}(A_{12}) - (1,0)| \in[0,\tfrac{2}{\sqrt3}],
$$
we have 
$$
|N_{15}(A_{12}) - N_{15}(A_{11})| = |N_{16}(A_{13}) - N_{16}(A_{12})| = \frac{2}{\sqrt3} -x_2.
$$
Similarly, for the triple junction $A_8$ we have 
$$
x_3:=|N_{10}(A_8) - (-1,0)| = |N_8(A_8) - (-\tfrac{1}{\sqrt3},1)| = |N_7(A_8) - (-1,0)|\in[0,\tfrac{2}{\sqrt3}]
$$
and 
$$
|N_7(A_8) - N_7(A_7)| = x_3,
$$
whereas for the triple junction $A_5$ we have 
$$
x_4:=|N_9(A_5) - (-1,0)| = |N_4(A_5) - (\tfrac{1}{\sqrt3},1)| = |N_5(A_5) - (1,0)|\in[0,\tfrac{2}{\sqrt3}]
$$
and 
$$
|N_4(A_5) - N_4(A_4)| = |N_5(A_6) - N_5(A_5)| = x_4.
$$
Finally, we turn to the quadruple junction $A_9.$ According to Figure \ref{fig:curva_compa} let 
$$
\big|N_{12}(A_9) - (1,0)\big| = x_5 \in[0,\tfrac{2}{\sqrt3}],\quad 
\big|N_{11}(A_9) - (1,0)\big| = x_6 \in[0,\tfrac{2}{\sqrt3}].
$$ 
Since all segments $S_{19}, S_{20}, S_{21}, S_{22}$ enter (are directed to) the quadruple junction, one can readily check (see also Example \ref{exa:four_networks}) that the balance condition
$$
[N_{12} + N_{11} + N_9 + N_{10}]\big|_{A_9} = 0
$$
holds if and only if $N_{12}(A_9) = -N_{10}(A_9)$ and $N_{11}(A_9) = -N_9(A_9).$ Thus, 
$$
|N_{12}(A_9) - N_{12}(A_1)| = |x_5 - x_1|,
\quad 
|N_{11}(A_9) - N_{11}(A_{12})| = |x_6 - x_2|,
$$
and
$$
|N_{10}(A_9) - N_{10}(A_8)| = |x_5 - x_3|,
\quad 
|N_9(A_9) - N_9(A_5)| = |x_6 - x_4|.
$$
In view of these observations $N_0$ is a solution of the minimum problem \eqref{shapat_minimi} if and only if $x^0:=(x_1^0,\ldots, x_6^0),$ defined as above with $N=\Nmin,$ minimizes the quadratic function 
\begin{multline*}
g(x):= 
\tfrac{x_1^2}{\cH^1(S_1)} 
+ 
\tfrac{(\frac{2}{\sqrt3} - x_1)^2}{\cH^1(S_{13})} 
+ 
\tfrac{(x_5-x_1)^2}{\cH^1(S_{12})} 
+
\tfrac{(\frac{2}{\sqrt3}-x_2)^2}{\cH^1(S_{15})} 
+ 
\tfrac{(\frac{2}{\sqrt3}-x_2)^2}{\cH^1(S_{16})} \\
+
\tfrac{(x_6-x_2)^2}{\cH^1(S_{11})} 
+
\tfrac{x_3^2}{\cH^1(S_7)}
+
\tfrac{(x_5-x_3)^2}{\cH^1(S_{10})}
+
\tfrac{x_4^2}{\cH^1(S_5)}
+
\tfrac{x_4^2}{\cH^1(S_4)}
+
\tfrac{(x_6-x_4)^2}{\cH^1(S_9)}
\end{multline*}
among all $x:=(x_1,\ldots,x_6)\in[0,\tfrac{2}{\sqrt3}]^6.$

\begin{example}\label{ex:broken_turlik}
Suppose furthermore that in Figure \ref{fig:curva_compa}
$$
\cH^1(S_{12}) = \cH^1(S_{11}) = \cH^1(S_9) = \cH^1(S_{10}) = 1
$$
and 
$$
\cH^1(S_1) = \cH^1(S_{13}) = \cH^1(S_{15}) = \cH^1(S_{16}) = \cH^1(S_4) = \cH^1(S_5) = \cH^1(S_7) = 1 - \epsilon
$$
for some $\epsilon\in(0,1).$ Then 
\begin{multline*}
g(x) = \tfrac{x_1^2 + (\tfrac{2}{\sqrt3}-x_1)^2 +2(\tfrac{2}{\sqrt3}-x_2)^2 +x_3^2 + 2x_4^2}{1-\epsilon}\\
+ (x_5-x_1)^2 + (x_6-x_2)^2 + (x_5-x_3)^2 + (x_6-x_4)^2.
\end{multline*}
Since $g$ is nonnegative and quadratic, solving the linear  system $\nabla g(x)=0$ we find that the minimizer $x^0$ is uniquely defined as 
$$
x_1^0:=\tfrac{2(3-\epsilon)}{\sqrt{3}(7-3\epsilon)},\quad 
x_2^0:=\tfrac{5-\epsilon}{\sqrt{3}(3-\epsilon)},\quad 
x_3^0:=\tfrac{2(1-\epsilon)}{\sqrt{3}(7-3\epsilon)},\\
$$
$$
x_4^0:=\tfrac{1-\epsilon}{\sqrt{3}(3-\epsilon)},\quad 
x_5^0:=\tfrac{2(2-\epsilon)}{\sqrt{3}(7-3\epsilon)},\quad 
x_6^0:=\tfrac{1}{\sqrt3}.
$$
Thus the values of the $\phi$-curvature of the segments ending at the quadruple junction are 
$$
\kappafi_{S_{12}} 
= \kappafi_{S_{10}} = \tfrac{2}{\sqrt{3}(7-3\epsilon)},\quad 
\kappafi_{S_{11}} 
= -\kappafi_{S_9} = \tfrac{2}{\sqrt{3}(3-\epsilon)} \neq \kappafi_{S_{12}},
$$
therefore, if these segments were translating in the direction of their unit normals with velocity equal to their $\phi$-curvature, then the quadruple junction should break into two triple junctions, and the network instantaneously changes its topology.
\end{example}

We can construct other networks containing multiple junctions which exhibit such an unstable behaviour, see Figure \ref{fig:nonstable}.

\begin{figure}[htp!]
\includegraphics[width=0.9\textwidth]{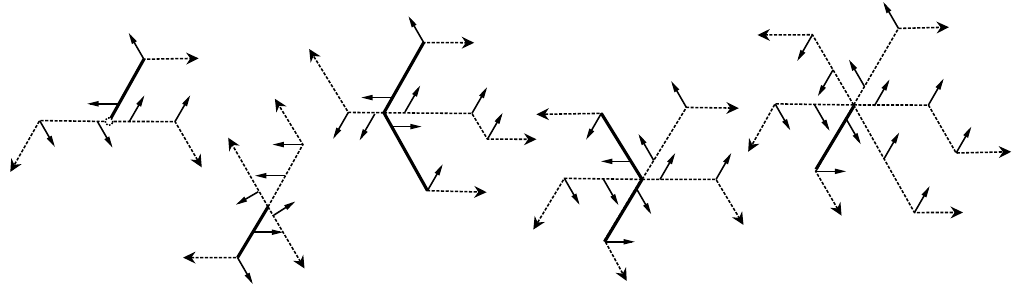}
\caption{\small Networks with multiple junctions at which at least one segment (in bold) has  nonzero $\phi$-curvature (for a suitable choice of the the lengths of the segments).}
\label{fig:nonstable}
\end{figure}

\subsection{Parallel networks}

Following the two-phase case we assume that segments in polygonal networks during the flow translate parallel. As in \cite{BCN:2006,BKh:2023} this encourages the following definition.

\begin{definition}[\textbf{Parallel network}] 
Let $\sS:=\cup_i S_i$ be a polygonal network consisting of a union of $N$ segments and $M$ half-lines. A polygonal network $\bar \sS$ is called \emph{parallel} to $\sS$ provided that:

\begin{itemize}
\item $\bar \sS:=\cup_i \bar S_i$ is a union of $N$ segments and $M$ half-lines and each $S_i$ is parallel to $\bar S_i$ (so that $\nu_{S_i} = \nu_{\bar S_i}$); 

\item if $S_i$ is a segment, then $\bar S_i$ is also a segment;

\item if $S_i$ is a half-line, then $\bar S_i$ is also a half-line and $S_i\Delta \bar S_i$ is bounded (hence $S_i$ and $\bar S_i$ lie on the same straight line);

\item if $\degree \ge2$ segments/half-lines $S_{i_1},\ldots,S_{i_\degree}$ have a common endpoint (for instance they form a simple vertex for $\degree=2$ or an $\degree$-junction for $\degree\ge3$), then so do $\bar S_{i_1},\ldots,\bar S_{i_\degree}$.
\end{itemize}
\end{definition}

If $ \sS=\cup_iS_i$ and $\bar \sS=\cup_i\bar S_i$ are parallel and  $S_i\cap S_j\ne\emptyset$ for some $i\ne j,$ then $\bar S_i \cap \bar S_j \ne \emptyset,$ and the angle between $S_i$ and $S_j$ equals  the angle  between $\bar S_i$ and $\bar S_j$  at their common point. In particular, any network parallel to an admissible network is itself admissible.

\begin{definition}[\textbf{Distance vectors}] 
$\,$

\begin{itemize}
\item Let $S,T$ be two parallel straight lines. A vector $H(S,T)\in\R^2$ satisfying $T = S + H(S,T)$ is called a \emph{distance vector} from $S$ to $T.$ For any interval $S_1\subseteq S$ and interval $T_1\subseteq T$ we write $H(S_1,T_1): = H(S,T).$ The distance from $S_1$ to $T_1$ is defined as
$$
\dist(S_1,T_1):=|H(S_1,T_1)|.
$$
In what follows we frequently refer to the number 
$$
h:=H(S_1,T_1)\cdot \nu_{S_1}
$$
as the \emph{(signed) height} from $S_1$ to $T_1.$ Note that $H(S_1,T_1) = h\nu_{S_1}$.

\item The \emph{distance} between two parallel networks $\sS:=\cup_{i=1}^n S_i$ and $\widehat \sS:=\cup_{i=1}^n\widehat S_i$ is given by  
$$
\dist(\sS,\widehat \sS) := \max\limits_{1\le i\le n} \,\,\dist(S_i,\widehat S_i).
$$
\end{itemize}
\end{definition}

\section{$\phi$-curvature flow of admissible $\phi$-regular networks}

Recalling the definition of admissible network (Definition \ref{def:admissible_network}) and of $\phi$-regular network (Definition \ref{def:phi_regular_networks_and_CH_fields}), we can now introduce the $\phi$-curvature flow.

\begin{definition}[\textbf{$\phi$-curvature flow}]\label{def:phi_curvature_flow}
Let $\initialnetwork$ be a $\phi$-regular (polygonal) admissible network, and $T \in (0,+\infty]$. A family $\{\sS(t)\}_{t\in[0,T)}$ is called a regular \emph{$\phi$-curvature flow in $[0,T)$ (a $\phi$-regular flow, for short) starting  from $\initialnetwork$} provided that:
\begin{itemize}
\item[(a)] $\sS(t)$ is parallel to $\initialnetwork$ for all $t\in[0,T)$;

\item[(b)] if $\initialnetwork=\cup_iS_i^0$ and $\sS(t)=\cup_iS_i(t),$ then the heights 
$$
h_i(\cdot):=H(S_i(\cdot),S_i^0)\cdot \nu_{S_i^0}
$$
belong to $C^1((0,T))\cap C^0([0,T))$ and satisfy 
\begin{equation}\label{mcf_heights}
\frac{d}{dt}\,h_i(t) = -\phi^o(\nu_{S_i(t)})\kappafi_{S_i(t)},\quad t\in(0,T),
\end{equation}
for any $i=1,\dots, n$.
\end{itemize}
\end{definition}

By the admissibility of $\sS(t)$ we have $\phi^o(\nu_{S_i(t)}) = 1$ for all $t\in [0,T)$, so \eqref{mcf_heights} reads as
\begin{equation}\label{eq:mcf_heights_simplified}
\frac{d}{dt}\,h_i(t) = - \kappafi_{S_i(t)},\quad t\in(0,T). 
\end{equation}

\begin{remark} 
$\,$
\begin{itemize}
\item[(a)] By our sign conventions, the $\phi$-curvature of the segments of any convex $\phi$-regular hexagon $\initialnetwork$ is nonnegative and hence the $\phi$-curvature flow $\sS(\cdot)$ starting from $\initialnetwork$ shrinks the hexagon.

\item[(b)] If $\initialnetwork$ is a critical network (Definition \ref{def:critical_network}), then the $\phi$-curvature of its segments/half-lines is $0.$ Therefore, the stationary flow $\sS(t):=\initialnetwork$ is the unique $\phi$-curvature flow starting from $\initialnetwork$ in $[0,+\infty)$.

\item[(c)] Being a gradient flow of the $\phi$-length, the $\phi$-curvature flow is expected to decrease the $\phi$-length. 
However, this is not the case for nonminimal critical networks, see for instance the network in Figure \ref{fig:non_minimal}. 

\item[(d)] Example \ref{ex:broken_turlik} (see also Figure \ref{fig:nonstable}) shows that not every admissible network admits a regular $\phi$-curvature flow: the $\phi$-curvature flow instantaneously should change the topological structure and create new segments or curves (see also \cite{BCN:2006}). 

\end{itemize}
\end{remark}

\begin{definition}[\textbf{Simple network}]\label{def:simple_network}
An admissible network is called \emph{simple} if it only contains triple junctions meeting at $120^o$-angles.
\end{definition}

\begin{remark}
In \cite{BCN:2006,BKh:2023} a network with a single triple junction $\triplejunction$ formed by three polygonal curves, each of which is the union of a segment and a half-line, and possibly with three anisotropies, is called \emph{stable}  provided the values of the unique minimal CH field at $X$ do not coincide with vertices of the corresponding Wulff shapes. Definition \ref{def:simple_network}  and the next theorem generalize \cite{BCN:2006,BKh:2023} in our single anisotropic case to a wider class of networks which may contain several triple junctions, some of which may not be stable.
\end{remark}

The main result of this section is the following

\begin{theorem}[\textbf{Existence and uniqueness of the $\phi$-curvature flow}]\label{teo:short_time}
Let $\initialnetwork$ be a simple network. Then there exist $\maximaltime \in (0,+\infty]$ and a unique family $\{\sS(t)\}_{t\in[0,\maximaltime)}$ of simple  networks such that $\sS(\cdot)$ is the $\phi$-curvature flow starting from $\initialnetwork.$ Moreover, if $\maximaltime<+\infty$ then some segment of $\sS(t)$ vanishes as $t\nearrow \maximaltime.$
\end{theorem}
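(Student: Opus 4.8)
The plan is to reduce the evolution to a system of ODEs for the heights $h_i$ and then invoke standard ODE theory. First I would use the structural result announced as Lemma~\ref{lem:graph_triple}: after removing the simple vertices, the network $\initialnetwork$ splits into connected graphs, and for each graph $G$ containing a triple junction, either a minimal CH field is constant (equal to a vertex of $\Wulff$) on every segment/half-line of $G$, in which case $\kappafi\equiv 0$ on $G$ and those segments do not move, or the minimal CH field never hits a vertex of $\Wulff$ on the interior of $G$. In the first case the corresponding $h_i$ are constant and there is nothing to evolve; in the second case, on each such graph the $\phi$-curvatures are given (via Remark~\ref{rem:prop_minimal_chn} and the explicit description in Section~\ref{subsec:comput_curvatures}) by the gradient of an explicit strictly convex quadratic function $g$ of the parameters $x_j$ measuring the CH field at the triple junctions, and these $x_j$ are in turn affine functions of the lengths $\cH^1(S_i)$, hence affine functions of the heights $h_i$. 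Consequently \eqref{eq:mcf_heights_simplified} becomes an autonomous system $\dot h = F(h)$ where $F$ is a rational (indeed, locally analytic, as long as no segment length degenerates) vector field: the quadratic form $g$ has a unique minimizer depending smoothly — via Cramer's rule — on the lengths, since its Hessian is positive definite by the strict convexity established in Theorem~\ref{teo:existence_of_a_minimal_CH_field}.

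Next I would set up the precise state space. The admissibility and the $120^\circ$-condition are \emph{rigid}: a network parallel to $\initialnetwork$ is completely determined by the heights $(h_1,\dots,h_n)$ subject to the linear closing relations imposed at each triple junction (three meeting segments with fixed directions force one linear relation among their three heights). So the actual unknowns live in an affine subspace $\mathcal H\subset\R^n$, and the open subset $\mathcal H^+\subset\mathcal H$ where every segment still has positive length is the natural domain. On $\mathcal H^+$ the right-hand side $F$ is locally Lipschitz (in fact $C^\infty$), so the Picard--Lindelöf theorem yields a unique maximal solution $h(\cdot)\in C^1((0,\maximaltime))$; continuity up to $t=0$ is automatic since $F$ is bounded near the initial point. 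One must check that the solution stays in the admissible class, i.e.\ that the topology does not change and the $120^\circ$-angles persist: this is exactly where simpleness is used — parallel translation of segments preserves directions and angles, and as long as all lengths remain positive no triple junction collides with another, so $\sS(t)$ remains a simple network. This gives existence and uniqueness on a maximal interval $[0,\maximaltime)$.

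For the blow-up alternative, suppose $\maximaltime<\infty$. By the standard escape-from-compact-sets principle for ODEs, $h(t)$ must leave every compact subset of $\mathcal H^+$ as $t\nearrow\maximaltime$. I would rule out the solution running off to infinity: the $\phi$-length $\ell_\phi(\sS(t))$ of the bounded part is monotone along the flow on each non-stationary graph (being the gradient flow of $\ell_\phi$, with dissipation $\sum_i\int_{S_i}(\kappafi)^2\,d\cH^1$), which bounds the total length of the moving segments and hence keeps the heights bounded on $[0,\maximaltime)$; the half-lines do not move at all. Therefore the only way to exit $\mathcal H^+$ is that $\cH^1(S_i(t))\to 0$ for some $i$, i.e.\ some segment vanishes. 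I expect the main technical obstacle to be the careful bookkeeping in the second paragraph: verifying that the affine dependence of the triple-junction parameters on the heights is global on $\mathcal H^+$, that the reduced system is genuinely well-posed across graphs that may share vertices, and that the $\phi$-curvature formula from Section~\ref{subsec:comput_curvatures} extends to an arbitrary simple network rather than the single worked example — together with confirming that a vanishing segment is the \emph{only} obstruction, i.e.\ that angles and the $120^\circ$-condition genuinely cannot be destroyed before some length hits zero.
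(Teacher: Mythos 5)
Your overall strategy coincides with the paper's: both reduce the flow to an autonomous ODE system for the heights, using the graph decomposition of Lemma \ref{lem:graph_triple} (together with the preservation of the ic/bc dichotomy, Lemma \ref{lem:preserving_parallelness}) to separate the stationary graphs from the moving ones, and both rely on the fact that on the ic-graphs the minimal CH field, hence the $\phi$-curvature, depends analytically on the heights through the unique interior minimizer of the quadratic functional; the paper records this as \eqref{curvatre_changes} with the Lipschitz bounds \eqref{nice_K_k} and then runs an explicit Banach fixed-point iteration rather than quoting Picard--Lindel\"of, which is an interchangeable packaging. One small slip: the junction parameters $x_j$ are \emph{not} affine functions of the lengths --- they solve a linear system whose coefficients are the reciprocals $1/\cH^1(S_i)$, so they are rational (analytic away from degeneracy) in the lengths, as you in fact say a line later; it is the lengths that are affine in the heights (formula \eqref{sochi_sunbul98r}).

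The one genuine weak point is your treatment of the alternative at $\maximaltime<+\infty$. You invoke monotonicity of $\ell_\phi$ along the flow to keep the heights bounded; this is both unproven here (for networks with half-lines $\ell_\phi$ is infinite, so a localized version with controlled boundary terms would be needed, and the paper's remark after Definition \ref{def:phi_curvature_flow} explicitly cautions that the flow need not decrease the $\phi$-length in general) and unnecessary: since $|h_i'|=|\kappafi_{S_i(t)}|\le \tfrac{2}{\sqrt3\,\cH^1(S_i(t))}$, the heights are automatically Lipschitz, hence bounded, on any finite interval on which the lengths stay bounded away from zero, so escape of the heights to infinity is never the obstruction. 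What actually must be argued --- and what the paper does --- is that if no segment length degenerates then the limit network $\sS(\maximaltime)$ is again simple and the fixed-point construction applied to $\sS(\maximaltime-\epsilon)$ yields an existence time bounded below uniformly in $\epsilon$ (its $T_0$ depends only on $\Delta_0$, $\max_i|\kappafi_{S_i}|$ and $\gamma_0$, all controlled when lengths and mutual distances of disjoint segments are), contradicting maximality. Relatedly, the claim that a vanishing segment is the \emph{only} way to leave the admissible class (no collision of non-adjacent segments before some length hits zero) is exactly where the quantitative separation constants $\Delta_1,\Delta_2$ of Theorem \ref{teo:reconstruction} enter; the qualitative observation that parallel translation preserves angles does not by itself deliver this, as you acknowledge in your closing caveat.
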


We need some auxiliary results, which actually provide the steps of the proof, concluded in Section \ref{sec:proof_of_theorem}. 

\begin{lemma}[\textbf{Quadratic minimization}]\label{lem:quadratic_minimization}
For $n\ge1$ let $\{a_i\}_{i=1}^n,$ $\{b_i\}_{i=1}^n$ and $\{c_{ij}\}_{i,j=1}^n$ be finite sets of nonnegative numbers such that
\begin{itemize}
\item $c_{ij}=c_{ji},$ $c_{ii}=0$ for all $i,j=1,\ldots,n;$

\item if $n\ge2,$ the square matrix $C:=(c_{ij})$ is irreducible, i.e., for every $i\ne j,$ there exists $m_{ij}\ge1$ such that $[C^{m_{ij}}]_{ij}>0.$ Equivalently, the unoriented graph $\sG$ with $n$ nodes and adjacency matrix\footnote{The adjacency matrix of an (oriented or unoriented) graph is the matrix $(\alpha_{ij})$, where $\alpha_{ij}=1$ if there is an edge from the node $i$ to node $j$, and $\alpha_{ij}=0$ otherwise.} $(\delta_{c_{ij},0})$ is connected, where $\delta_{\alpha,\beta}=1$ if $\alpha=\beta$ and $\delta_{\alpha,\beta}=0$ if $\alpha\ne\beta$ \cite[Chapter 1]{S:2015}.
\end{itemize}
Consider the quadratic function 
$$
\psi(x) := \sum\limits_{i=1}^n a_i x_i^2 + 
\sum\limits_{i=1}^n b_i (d-x_i)^2 + \sum\limits_{1\le i<j\le n}  c_{ij} 
(x_i - x_j)^2,\quad x = (x_1,\ldots,x_n)\in \Rn.
$$
Then, for $d>0$,
$$
\min_{x \in [0,d]^n}\psi(x) = 0\quad \Longleftrightarrow\quad \text{either all $a_i$ are zero or all $b_i$ are zero.}
$$
Furthermore, if $\min_{x \in [0,d]^n} \psi(x) > 0$ then $\psi$ has a unique minimizer and this minimizer lies in $(0,d)^n.$
\end{lemma}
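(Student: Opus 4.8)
The plan is to prove the equivalence first and then the uniqueness–interiority statement, using throughout that every summand defining $\psi$ is nonnegative, so $\psi\ge0$ on all of $\Rn$.

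\emph{The equivalence.} The direction ``$\Leftarrow$'' is immediate: if all $a_i=0$ then $\psi(d,\dots,d)=0$, while if all $b_i=0$ then $\psi(0,\dots,0)=0$; as the exhibited point lies in $[0,d]^n$ and $\psi\ge 0$, the minimum equals $0$. For ``$\Rightarrow$'', suppose $\psi(x^*)=0$ with $x^*\in[0,d]^n$. Then each summand vanishes at $x^*$; in particular $c_{ij}(x_i^*-x_j^*)^2=0$ for all $i,j$, so $x_i^*=x_j^*$ whenever $c_{ij}>0$, and since $\sG$ is connected, $x^*=t(1,\dots,1)$ for some $t\in[0,d]$. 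Vanishing of the remaining terms gives $a_it^2=0$ and $b_i(d-t)^2=0$ for all $i$. Because $d>0$, $t$ cannot equal both $0$ and $d$; if $t>0$ then all $a_i=0$, and if $t<d$ then all $b_i=0$, so at least one alternative holds. (When $n=1$ the same argument applies with the $c$-terms absent.)

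\emph{Strict convexity.} Now assume $\min_{[0,d]^n}\psi>0$; by the equivalence there exist indices with $a_{i_0}>0$ and $b_{j_0}>0$. I would identify the Hessian of $\psi$ as $2(D+L)$, where $D=\mathrm{diag}(a_i+b_i)$ and $L$ is the weighted graph Laplacian, $L_{ii}=\sum_j c_{ij}$ and $L_{ij}=-c_{ij}$ for $i\ne j$. Both $D$ and $L$ are positive semidefinite (for $L$: $v^\top L v=\sum_{i<j}c_{ij}(v_i-v_j)^2$), and $v^\top(D+L)v=0$ forces $v^\top L v=0$, hence $v$ constant by connectivity, and $v^\top Dv=0$, hence $(a_{i_0}+b_{i_0})v_{i_0}^2=0$ and so $v=0$. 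Thus $\psi$ is strictly convex and, being a positive-definite quadratic plus lower-order terms, coercive; it therefore has a unique global minimizer $\bar x\in\Rn$, a critical point: $\nabla\psi(\bar x)=0$. It then suffices to show $\bar x\in(0,d)^n$, for then $\bar x$ minimizes $\psi$ over $[0,d]^n$ as well, and strict convexity makes it the unique minimizer there.

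\emph{Interiority — the main obstacle.} This last step is where the real work lies; I would prove it by a sign-and-propagation argument on the stationarity equations $a_i\bar x_i-b_i(d-\bar x_i)+\sum_j c_{ij}(\bar x_i-\bar x_j)=0$. Put $m:=\min_i\bar x_i$, attained at some $i_0$, and suppose $m\le0$. In the $i_0$-th equation the three quantities $a_{i_0}\bar x_{i_0}$, $-b_{i_0}(d-\bar x_{i_0})$ and $\sum_j c_{i_0 j}(\bar x_{i_0}-\bar x_j)$ are each $\le0$ (using $\bar x_{i_0}=m\le0$, $d-m\ge d>0$ and $m\le\bar x_j$ for all $j$), so each vanishes; in particular $b_{i_0}=0$, and $\bar x_j=m$ for every $j$ with $c_{i_0 j}>0$. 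Iterating along edges of the connected graph $\sG$, one gets $\bar x\equiv m$ and $b_i=0$ for all $i$, contradicting $\min_{[0,d]^n}\psi>0$; hence $m>0$. Symmetrically, taking $M:=\max_i\bar x_i$ and assuming $M\ge d$ yields all $a_i=0$, again a contradiction, so $M<d$. Therefore $\bar x\in(0,d)^n$, which finishes the proof. The equivalence and the strict-convexity step are routine once the Laplacian structure is recognized; it is the bookkeeping of signs and the connectivity-driven propagation in this final step that demand care.
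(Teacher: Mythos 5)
Your proof is correct, and its first two steps (the equivalence via vanishing of each summand, and positive definiteness of the Hessian $2(D+L)$ through the weighted graph Laplacian and connectivity) coincide with what the paper does. Where you genuinely diverge is the interiority step, which is the heart of the lemma. The paper writes the stationarity system as $Ax^T=b^T$ with $b=(b_1d,\dots,b_nd)$ and proves that \emph{every entry of $A^{-1}$ is positive}: splitting $A=D'-C$ with $D'$ its diagonal part, it shows $X:=D'^{-1/2}CD'^{-1/2}$ has operator norm less than $1$, expands $A^{-1}$ as a Neumann series of entrywise nonnegative matrices, and uses irreducibility of $C$ to upgrade nonnegativity to strict positivity; the bound $\bar x_i<d$ is then obtained by the substitution $x\mapsto(d-x_1,\dots,d-x_n)$. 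You instead run a discrete maximum-principle argument directly on the equations $\nabla\psi(\bar x)=0$: at an index realizing $\min_i\bar x_i\le 0$ all three groups of terms are nonpositive, hence each vanishes, and connectivity propagates $\bar x\equiv m$ and $b_i\equiv 0$, a contradiction; the symmetric argument at the maximum handles the upper bound. Your route is more elementary (no operator norms, Neumann series, or Perron--Frobenius-type input) and treats the forward implication of the equivalence and the interiority by one uniform mechanism; the paper's route yields the stronger conclusion that $A^{-1}$ is entrywise positive, i.e., the minimizer depends monotonically on the data $(b_i)$, which is more than the lemma requires. Both arguments are complete and correct.
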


\begin{proof}
If $\sum_{i=1}^n b_i=0,$ i.e., all $b_i$ are $0,$ then $(0,\ldots,0)\in [0,d]^n$ is the minimizer of $\psi$  and the minimum is $0.$ On the other hand, if $\sum_{i=1}^n a_i=0,$ i.e., all $a_i$ are $0,$ then $(d,\ldots,d)\in [0,d]^n$ is the  minimizer of $\psi$ and again the minimum is $0.$ As we shall see, these are the only cases when the minimizer belongs to the boundary of $[0,d]^n$.

Assume that $\sum_{i=1}^n b_i>0$  and $\sum_{i=1}^n a_i>0.$ If $n=1$ so that $a_1b_1>0,$ then $\min_{x_1\in[0,d]}\psi(x_1)>0$ and its unique minimizer $x_1^0:=\frac{b_1d}{a_1+b_1}$ belongs to $(0,d).$ Therefore, further, we may suppose that $n\ge2$.

Consider the equation $\nabla \psi(x)=0$ for $x \in \Rn$, i.e.,
\begin{equation}\label{dominoDomino}
\Big(a_i+b_i+\sum_{j=1}^nc_{ij}\Big)x_i - \sum_{j=1}^n c_{ij}x_j = b_id,\qquad i=1,\ldots,n.
\end{equation}
Let $A:=(A_{ij})$ be the square matrix whose entries are 
$$
A_{ii}=a_i+b_i+\sum_{l=1}^nc_{il},\quad 
A_{ij}=-c_{ij},\quad i,j =1,\ldots,n,\quad i\ne j.
$$
We have 
\begin{equation}\label{kibbutth}
Ax^T\cdot x^T = \sum_{i,j=1}^n A_{ij}x_ix_j = \sum_{i=1}^n(a_i+b_i)x_i^2 + \tfrac{1}{2}\sum_{i\ne j} c_{ij} (x_i-x_j)^2 \geq 0.
\end{equation}
We claim that $Ax^T\cdot x^T=0$ if and only if $x=0,$ and so, $A$ is positive definite. Indeed, since the graph $\sG$ associated to the matrix $C=(c_{ij})$ is connected, the last sum in \eqref{kibbutth} is zero if and only if $x_i=x_j$ for all $i\ne j.$ Indeed, by connectedness, we can reach from the vertex $i=1$ to each vertex $i>1$ of the graph through the vertices $i_1=1,i_2,\ldots,i_m=i.$ Then $c_{i_1i_2},c_{i_2i_3},\ldots, c_{i_{m-1}i_m}>0,$ and hence, $x_1=x_{i_2}=\ldots=x_{i_{m-1}}=x_i.$ Since $\sum_i(a_i+b_i)>0$ by assumption, this implies $Ax^T\cdot x^T=0$ iff $x=0.$ In particular all diagonal elements $A_{ii}$ of $A$ are positive.

Let us show that all entries $\hat a_{ij}$ of $A^{-1}$ are also positive. Indeed, let $D$ be the matrix formed by the diagonal elements of $A,$ i.e., $D := A+C.$ By assumption $c_{ij}\ge0,$ hence the entries of the matrix $X:=D^{-1/2}CD^{-1/2}$ are nonnegative. Moreover, since $D>C$ in the sense of linear operators, for any $x\in\S^{n-1}$
$$
Xx^T\cdot x^T = C(D^{-1/2}x)^T\cdot (D^{-1/2}x)^T =  Cy^T\cdot y^T<Dy^T\cdot y^T = |D^{1/2}y|^2 =|x|^2 =1,
$$
where $y:=D^{-1/2}x\ne0.$ Therefore, the norm $\|X\|=\sup_{x\in\R^n,\,\|x\|=1}\, Xx^T\cdot x^T$. satisfies $\|X\|<1.$ Then the matrix $I-X$ is invertible and its inverse is given by the Neumann series,  
$$
A^{-1}=D^{-1/2}(I-X)^{-1}D^{-1/2} = \sum_{k\ge0}D^{-1/2} X^k D^{-1/2}. 
$$
Clearly, the entries of  $D^{-1/2} X^k D^{-1/2}$ are nonnegative for all $k\ge0$. Since $C$ is irreducible with nonzero elements, and $D^{-1/2}$ is diagonal with positive diagonal elements, $X$ is also irreducible. 
In particular, all entries of $X^m$ for some $m\ge1$ are positive and hence, all elements of $A^{-1}$ are also positive. 

Therefore, the system \eqref{dominoDomino} has a unique solution 
$$
x^T:=A^{-1}b^T, \quad b=(b_1d,\ldots,b_nd); 
$$
and recalling that $b_id\ge0$ with $\sum_{i=1}^n b_i>0,$ we deduce $x_i^0>0$ for all $i=1,\ldots,n.$ This unique solution provides the unique minimizer of $\psi.$

To prove $x_i^0<d,$ we apply the previous argument to the quadratic function
$$
\psi^*(x_1,\ldots,x_n)=\psi(d-x_1,\ldots,d-x_n)
$$
(in this case we use $\sum a_i>0$) and conclude that the unique minimizer $y^0$ of $\psi^*$ satisfies $y_i^0>0.$ By uniqueness, this implies $d-y_i^0=x_i^0$ and hence, $x_i^0\in (0,d)$ for all $i=1,\dots,n$.
\end{proof}

Let $\sS$ be a simple network, and divide $\sS$ into connected graphs $G_1,\ldots,G_m$ removing all simple vertices. Since $\sS$ contains only triple junctions, each $G_i$ is either a single segment/half-line or a union of segments/half-lines at some triple junction. 

\begin{lemma}\label{lem:graph_triple}
Let $\Nmin$ be a minimal CH field of the simple network $\sS$. Suppose $G_i$ contains at least one triple junction. Then for $G_i$ the following holds:
\begin{itemize}
\item either ${\rm div}_\tau \Nmin=0$ on all segments/half-lines of $G_i$, and in this case the values of $\Nmin$ at all triple junctions of $G_i$ can be chosen as (three distinct) vertices of $\Wulff,$

\item or the values of $\Nmin$ at all triple junctions of $G_i$ do not coincide with any vertex of $\Wulff$ 
\end{itemize}
(see Figure  \ref{fig:parsing_grapf}).
\end{lemma}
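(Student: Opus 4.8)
The plan is to reduce the variational problem defining $\Nmin$ on $G_i$ to an instance of Lemma \ref{lem:quadratic_minimization} and to read off the dichotomy from its conclusion. First I would record the local structure of a minimal CH field from Remark \ref{rem:prop_minimal_chn}: on $G_i$ the field $\Nmin$ is constant along every half-line, affine along every segment with its two endpoint values on a common facet of $\Wulff$, and equal to a fixed vertex of $\Wulff$ at every simple vertex of $\sS$ (these are exactly the leaves produced in $G_i$ when the simple vertices are removed). At each triple junction $T$ of $G_i$, the $120^o$-angle condition together with the balance condition \eqref{eq:balance_condition} confines the triple $\Nmin(T)$ to the one-parameter family already exploited in the proof of Lemma \ref{lem:sum_curvature_balance} and in Section \ref{subsec:comput_curvatures}: there is a parameter $x_T\in[0,\tfrac{2}{\sqrt3}]$ such that, on its facet, each value $\Nmin_{S_j}(T)$ lies at distance $x_T$ from a prescribed reference vertex of $\Wulff$, the two endpoint values $x_T=0$ and $x_T=\tfrac{2}{\sqrt3}$ giving the two ``all-vertex'' configurations at $T$.

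Next I would rewrite the objective of \eqref{shapat_minimi}, which decouples over $G_1,\dots,G_m$ because the only coupling between segments is through shared junctions and through the (frozen) simple vertices, as a quadratic function of the finitely many parameters $(x_T)$, $T$ ranging over the triple junctions of $G_i$. The one genuinely delicate point — the main obstacle — is to choose the reference vertices so that the resulting function is literally of the form appearing in Lemma \ref{lem:quadratic_minimization}. The key observation is that the triple-junction graph of $G_i$ (nodes $=$ triple junctions, edges $=$ segments joining two of them) is bipartite: up to a $60^o$-rotation, a $120^o$-junction of facet-parallel segments has its rays in one of the two alternating direction-sets, these two types necessarily alternate along any connecting segment, and for each type the two all-vertex configurations are precisely the two alternating triples of vertices of $\Wulff$. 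Flipping the reference choice ($x_T\mapsto \tfrac{2}{\sqrt3}-x_T$) on one part of the bipartition, every segment joining two triple junctions $T,T'$ then contributes $\tfrac{(x_T-x_{T'})^2}{\cH^1(S)}$, every segment joining a triple junction $T$ to a simple vertex contributes $\tfrac{x_T^2}{\cH^1(S)}$ or $\tfrac{(\tfrac{2}{\sqrt3}-x_T)^2}{\cH^1(S)}$ according to which endpoint of the facet carries the frozen value, and every half-line contributes $0$. Hence the objective equals the function $\psi$ of Lemma \ref{lem:quadratic_minimization} with $d=\tfrac{2}{\sqrt3}$, nonnegative coefficients $a_T,b_T$ (sums over the pendant segments at $T$) and $c_{TT'}=\cH^1(S)^{-1}>0$ when $T,T'$ are joined by a segment $S$, $c_{TT'}=0$ otherwise; the matrix $(c_{TT'})$ is irreducible since $G_i$ is connected and the simple vertices, which are the only other gluing points of $\sS$, are leaves of $G_i$ — or there is a single triple junction, $n=1$, and irreducibility is not required.

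Finally I would invoke Lemma \ref{lem:quadratic_minimization}. Evaluated at any admissible parameter, $\psi$ equals $\sum_{S\subset G_i}\cH^1(S)^{-1}|\Nmin_S(Y_S)-\Nmin_S(X_S)|^2$, so $\min\psi=0$ if and only if $\Nmin$ is constant on every segment of $G_i$, i.e. if and only if $\tandiv\Nmin=0$ on all segments and half-lines of $G_i$. If $\min\psi=0$, Lemma \ref{lem:quadratic_minimization} gives that all $a_T$ vanish or all $b_T$ vanish, so $x\equiv0$ (respectively $x\equiv \tfrac{2}{\sqrt3}$) is a minimizer; with this choice $\Nmin$ takes at each triple junction of $G_i$ one of the two alternating triples of vertices of $\Wulff$ (three distinct vertices), and the whole construction is consistent because each segment term then vanishes — this is the first alternative. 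If $\min\psi>0$, Lemma \ref{lem:quadratic_minimization} gives a unique minimizer lying in the open cube $(0,\tfrac{2}{\sqrt3})^n$, hence $x_T\in(0,\tfrac{2}{\sqrt3})$ for every triple junction $T$ of $G_i$, i.e. no value of $\Nmin$ at a triple junction of $G_i$ is a vertex of $\Wulff$ — this is the second alternative. I expect everything apart from the geometric bookkeeping of the second paragraph (matching the reference vertices via bipartiteness so that the inter-junction terms are honest squared differences) to be routine, reducing to direct applications of Lemma \ref{lem:quadratic_minimization} and Remark \ref{rem:prop_minimal_chn}.
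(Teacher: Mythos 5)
Your proposal is correct and follows essentially the same route as the paper: parametrize each triple junction of $G_i$ by its distance $x_T\in[0,\tfrac{2}{\sqrt3}]$ to a reference vertex, reduce the minimization \eqref{shapat_minimi} on $G_i$ to the quadratic function of Lemma \ref{lem:quadratic_minimization} (with irreducibility coming from connectedness of $G_i$), and read the dichotomy off from whether the minimum is zero (corner minimizer, vertex values) or positive (unique interior minimizer, no vertex values). Your bipartiteness argument for normalizing the reference vertices so that inter-junction terms are honest squared differences $(x_T-x_{T'})^2$ is a point the paper's proof passes over silently, and is a welcome extra justification rather than a departure.
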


\begin{figure}[htp!]
\includegraphics[width=0.6\textwidth]{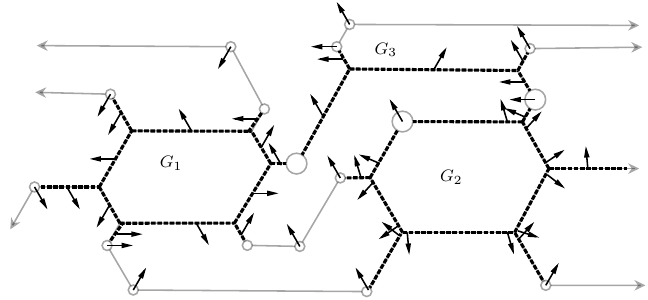}
\caption{\small A simple network parsed into connected graphs $G_1,G_2,G_3,$ by removing all simple vertices (small circles), and a possible CH field. Here $G_1,G_2,G_3,$ consisting of a union of bold dotted lines, are the only graphs containing triple junctions (other graphs are either isolated segments or half-lines). Notice that $G_1$ and $G_3$ admit a (locally) constant CH field.}
\label{fig:parsing_grapf}
\end{figure}

\begin{proof}
We assume that $i=1$ and $G_1$ contains at least one segment, and $G_1$ has exactly $r\ge1$ triple junctions, $X_1,X_2,\ldots,X_r$. Let $N\in \cahnhofman(\sS ).$ As we observed in Remark \ref{rem:prop_minimal_chn}, the values of $N$ at the simple vertices of $\sS $ are uniquely defined. Consider any $X_k,$ which is a junction of segments/half-lines, say,  $S_{k_1} ,S_{k_2}  $ and $S_{k_3} ,$ and assume that $S_{k_1}=[X_{k_1}Y_{k_1}]$ is a segment (oriented from $X_{k_1}$). Let $V_k^1$ be the vertex of $\Wulff$ such that $N_{k_1}(X_{k_1}),N_{k_1}(Y_{k_1})$ and $V_k^1$ lie in the same facet of $\Wulff.$ As in Section \ref{subsec:comput_curvatures} define
$$
x_k:=|N_{k_1}(X_k) - V_k^1|\in [0,\tfrac{2}{\sqrt3}].
$$
Repeating the same arguments in Section \ref{subsec:comput_curvatures}, the minimum problem leading to $\Nmin$ is reduced to minimizing the function 
\begin{align}\label{function_psi_qara}
\psi(x_1,\ldots,x_r) = &  \sum\limits_{k=1}^r  \Big(\tfrac{\alpha_k^1}{\cH^1(S_{k_1} )} +  \tfrac{\alpha_k^2}{\cH^1(S_{k_2} )}+\tfrac{\alpha_k^3}{\cH^1(S_{k_3} )}\Big) \,x_k^2 \nonumber \\
&+
\sum\limits_{k=1}^r  \Big(\tfrac{\beta_k^1}{\cH^1(S_{k_1} )} + \tfrac{\beta_k^2}{\cH^1(S_{k_2} )}+\tfrac{\beta_k^3}{\cH^1(S_{k_3} )}\Big)\,\big(\tfrac{2}{\sqrt3}-x_k\big)^2
+
\sum\limits_{1\le i<j\le r} \tfrac{\gamma_{ij}\,(x_i-x_j)^2}{\cH^1(S_{i,j})}
\end{align}
in the cube $[0,\frac{2}{\sqrt3}]^r,$ where $\alpha_k^i,\beta_k^i \in\{0,1\}$,  $\gamma_{ij}\in\{0,1\}$ and $\gamma_{ij}=1$ if and only if there is a segment $S_{i,j}$ of $\sS $ connecting  $X_i$ and $X_j,$ and for each $k,$ these coefficients are uniquely defined depending only on how segments/half-lines $S_{k_1},S_{k_2},S_{k_3},$ forming a junction at $X_k,$ behave at their other endpoints. Indeed, for shortness setting $\gamma_{ii}=0,$ let another endpoint $Y$ of $S_{k_1}$ be a simple vertex and $N_{k_1}(Y)$ bisects the interior resp. exterior angle of $\sS $ at this vertex. Then $\beta_k^1=0$ and $\alpha_k^1=1$ resp. $\beta_k^1=1$ and $\alpha_k^1=0$, and also $\gamma_{kl}=\gamma_{lk}=0$ for all $l$. On the other hand, if the other endpoint of $S_{k_1}$ is another triple junction, say, $X_l,$ then $S_{k,l}:=S_{k_1},$ $\gamma_{kl}=1$ and $\alpha_k^1=\beta_k^1=0.$ The same applies also to $S_{k_2}$ and $S_{k_3}$. 
In particular, for each $x_k,$ at most three of $\{\alpha_k^i,\beta_k^i,\gamma_{ki}\}_{i=1}^r$ can be nonzero. Moreover, by the connectedness of $G_1,$ the matrix $(\gamma_{ki})$ is irreducible.

Now by Lemma \ref{lem:quadratic_minimization} either $\min\psi=0,$ which is possible if and only if either all $\alpha_k^i=0$ (so that $x_1=\ldots=x_r=0$ minimizes $\psi$) or $\beta_k^i=0$ (so that $x_1=\ldots=x_r=\tfrac{2}{\sqrt3}$ minimizes $\psi$), or $\min\psi>0$ so that the unique minimizer $(x_1 ,\ldots,x_r )$ is an interior point of the cube $[0,\tfrac{2}{\sqrt3}]^r.$ By definition $\min\{x_k,\tfrac{2}{\sqrt3}-x_k\}$ measures the distance between the values of $\Nmin$ at $X_k$ and the corresponding vertices of $\Wulff,$ therefore, in case $\min\psi>0$ those values do not coincide with the vertices of $\Wulff.$
\end{proof}

\begin{definition}[\textbf{ic-triple junctions and bc-triple junctions}]
\label{def:interior_to_and_at_the_boundary_of_the_constraint}
Let $\sS$ be a simple network, and $X$ be a triple junction of $X$. $X$ is called \emph{interior to the constraint} (shortly, an ic-triple junction) if any minimal CH field at $X$ do not coincide with vertices of $\Wulff.$ $X$ is called at the \emph{boundary of the constraint} (shortly, a bc-triple junction) if there is a minimal CH field having values at $X$ coinciding with some vertices of $\Wulff$. 
\end{definition}

The following lemma shows that ic-triple junctions and bc-triple junctions are preserved in parallelness.

\begin{lemma}[\textbf{Preserving parallelness}]
\label{lem:preserving_parallelness}
Let $\sS$ and $\bar \sS$ be two parallel simple networks,  and let $\{G_i\}$ and $\{\bar G_i\}$ be their partitions into connected graphs as above. Then, for any $i=1,\dots,r$,  both $G_i$ and $\bar G_i$ can contain either only ic-triple junctions or only bc-triple junctions.
\end{lemma}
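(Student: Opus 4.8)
The key observation is that the quadratic minimization problem that determines the minimal CH field on a graph $G_i$ (as in the proof of Lemma~\ref{lem:graph_triple}) depends on the data of $G_i$ only through which endpoints of its segments/half-lines are simple vertices (and whether the interior/exterior angle is bisected there) and which are other triple junctions of $G_i$ — that is, through the coefficients $\alpha_k^i,\beta_k^i,\gamma_{ij}\in\{0,1\}$ — and \emph{not} through the lengths $\cH^1(S_{k_j})$ of the segments, which only enter as positive weights. Concretely, I would first record that since $\sS$ and $\bar\sS$ are parallel, the partitions $\{G_i\}$ and $\{\bar G_i\}$ are in bijection with $G_i$ and $\bar G_i$ combinatorially identical: they have the same number of triple junctions $X_1,\dots,X_r$, the same incidence pattern of segments/half-lines, and at each simple vertex the minimal CH field takes the same (uniquely determined, by Remark~\ref{rem:prop_minimal_chn}) value, since parallel segments/half-lines share their unit normals and hence the same closest vertex of $\Wulff$. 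Therefore the functions $\psi$ and $\bar\psi$ built in \eqref{function_psi_qara} have \emph{identical} coefficients $\alpha_k^i$, $\beta_k^i$, $\gamma_{ij}$, differing only in the positive lengths appearing in the denominators.

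Next I would invoke Lemma~\ref{lem:quadratic_minimization}: whether $\min\psi=0$ or $\min\psi>0$ is governed \emph{solely} by whether all the $a_k$ (here $a_k=\tfrac{\alpha_k^1}{\cH^1(S_{k_1})}+\tfrac{\alpha_k^2}{\cH^1(S_{k_2})}+\tfrac{\alpha_k^3}{\cH^1(S_{k_3})}$) vanish, or all the $b_k$ (the analogous sums with $\beta_k^j$) vanish. But since all lengths are strictly positive, $a_k=0$ if and only if $\alpha_k^1=\alpha_k^2=\alpha_k^3=0$, and similarly for $b_k$; these are conditions on the $\{0,1\}$-coefficients alone, which coincide for $\psi$ and $\bar\psi$. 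Hence $\min\psi=0 \iff \min\bar\psi=0$. When the minimum is positive, the unique minimizer lies in the open cube, so by the definition of ic-triple junction (Definition~\ref{def:interior_to_and_at_the_boundary_of_the_constraint}) \emph{all} triple junctions of $G_i$ (and of $\bar G_i$) are ic; when the minimum is zero, Lemma~\ref{lem:graph_triple} gives that $\mathrm{div}_\tau\Nmin=0$ on all of $G_i$ and the values of $\Nmin$ at all triple junctions can be chosen to be vertices of $\Wulff$, so all triple junctions of $G_i$ (and of $\bar G_i$) are bc. This proves the assertion for each $i$.

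The one point requiring a little care — and the place I expect a referee to push — is the claim that the ic/bc classification really is a property of the whole graph $G_i$ rather than of individual junctions within it. This is exactly Lemma~\ref{lem:graph_triple}: because the matrix $(\gamma_{ij})$ is irreducible (connectedness of $G_i$), the dichotomy ``$\min\psi=0$ vs.\ $\min\psi>0$'' is global, forcing \emph{either} all $x_k\in\{0,\tfrac2{\sqrt3}\}$-type behavior at once \emph{or} all $x_k$ interior at once — there is no mixed regime. So Definition~\ref{def:interior_to_and_at_the_boundary_of_the_constraint} is consistent, and the statement of the present lemma is literally the transfer of this global dichotomy across the (length-independent) combinatorial identification of $G_i$ with $\bar G_i$. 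I would close by remarking that graphs consisting of a single segment or half-line contain no triple junction and so are vacuously covered.
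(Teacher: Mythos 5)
Your proof is correct. The paper's own argument is shorter and works at the level of the CH field itself: if $G_i$ contains only bc-triple junctions, then by Lemma \ref{lem:graph_triple} a minimal CH field has zero tangential divergence on all of $G_i$, and by parallelness the very same (locally constant) field values are admissible on $\bar G_i$, yielding a zero-divergence CH field there, hence a minimizer; so $\bar G_i$ is all-bc, and the converse direction follows by symmetry. You instead transfer the \emph{combinatorial data} of the quadratic functional \eqref{function_psi_qara}: under parallelness the coefficients $\alpha_k^i,\beta_k^i,\gamma_{ij}$ coincide and the lengths enter only as positive weights, so by Lemma \ref{lem:quadratic_minimization} the dichotomy $\min\psi=0$ versus $\min\psi>0$ is decided identically for $\psi$ and $\bar\psi$. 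Both routes hinge on the global (all-ic or all-bc) dichotomy of Lemma \ref{lem:graph_triple}; yours makes more explicit \emph{why} the classification is length-independent, at the cost of re-invoking the minimization machinery, while the paper's is a two-line transfer of a calibrating field. No gaps.
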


\begin{proof}
Let $G_i$ contain only bc-triple junctions. By Lemma \ref{lem:graph_triple} all segments/half-lines have zero $\phi$-curvature. By parallelness, we can choose the same CH field along the segments/half-lines of $\bar G_i$ so that all its segments/half-lines have also zero $\phi$-curvature. 
In particular, again by Lemma \ref{lem:graph_triple} all triple junctions of $\bar G_i$ are bc-triple junctions. This argument shows also that if $G_i$ contains only ic-triple junctions then $\bar G_i$ cannot contain bc-triple junctions.
\end{proof}

Lemmas \ref{lem:graph_triple} and \ref{lem:preserving_parallelness}  suggest that bc-triple junctions do not move. 

\begin{definition}[\textbf{The class $\Xi(\sS)$}]
Given a simple network $\sS,$ we denote by $\Xi(\sS)$ the collection of all networks $\sT$ parallel to $\sS$ such that if $X$ is a bc-triple junction of $\sS$, then $X$ is a (bc)-triple junction also for $\sT.$  
\end{definition}

Thus, by definition, the $\phi$-curvature flow $\{\sS(t)\}_{t\in [0,T)}$ starting from $\initialnetwork$ is a subset of $\Xi(\initialnetwork),$ i.e., $\sS(t)\in \Xi(\sS)$ for all times $t\in [0,T)$.

\subsection{Parallel networks}

Now we consider the problem of reconstructing a parallel network from a given set of heights. We shall see later in Lemma \ref{lem:heights_compatibility} that the heights at triple junctions cannot have too much freedom.

\begin{theorem}[\textbf{Reconstruction}]\label{teo:reconstruction}
Let $\sS:=\cup_{i=1}^nS_i$ be a simple network and define
\begin{equation}\label{eq:Delta12}
\Delta_1:=\frac{1}{3\sqrt3}\,\min_{1\le i\le n}\cH^1(S_i),\qquad \Delta_2:=\frac{1}{6}\,\min_{S_i\cap S_j=\emptyset} \,d(S_i,S_j).
\end{equation}
Let $\{h_i\}_{i=1}^n$ be any set of real numbers such that: 
\begin{itemize}
\item[\rm(a)] if $S_i$ is either a segment at a bc-triple junction or a half-line, then $h_i=0;$

\item[\rm(b)] if $S_i,S_j,S_k$ form a ic-triple junction, then 
$$
(-1)^{\exponent_i}h_i + (-1)^{\exponent_j}h_j + (-1)^{\exponent_k}h_k = 0,
$$
where $\exponent_i,\exponent_j,\exponent_k\in\{0,1\},$ and $\exponent_s=0$ is $S_s$ if oriented from the triple junction and $1$ otherwise;

\item[\rm(c)] $|h_j|\le \min\{\Delta_1,\Delta_2\}.$
\end{itemize}
Then there exists a unique network $\sT:=\cup_{i=1}^nT_i,$ parallel to $\sS$ such that $\sT\in \Xi(\sS)$ and 
$$
H(S_i,T_i)\cdot\nu_{S_i} = h_i,\quad i=1,\ldots,n.
$$
\end{theorem}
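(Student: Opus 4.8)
The plan is to observe that the data $\{h_i\}$ rigidly determine the supporting line of each $T_i$ and, together with the incidence pattern of $\sS$, determine $\sT$; this yields uniqueness. For existence we build $\sT$ from these lines, use (a) and (b) to see that the lines assemble into a network combinatorially identical to $\sS$, and use (c) to see that the result is embedded, has no degenerate pieces, and lies in $\Xi(\sS)$.

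For each $i$ let $L_i$ be the unique line parallel to the line through $S_i$ at signed height $h_i$, that is $L_i=\{x\in\R^2:\ x\cdot\nu_{S_i}=x_0\cdot\nu_{S_i}+h_i\}$ for some (any) $x_0\in S_i$. At a bc-triple junction $X$ of $S_i,S_j,S_k$, condition (a) forces $h_i=h_j=h_k=0$, so $L_i,L_j,L_k$ still pass through $X$; set $X':=X$. At an ic-triple junction $X$ of $S_i,S_j,S_k$ let $\exponent_s\in\{0,1\}$ be the orientation indices of (b) (so $\exponent_s=0$ when $S_s$ points away from $X$) and put $\tilde\nu_s:=(-1)^{\exponent_s}\nu_{S_s}$, the outward unit normals; the $120^\circ$-condition gives $\tilde\nu_i+\tilde\nu_j+\tilde\nu_k=0$, and with $\tilde h_s:=(-1)^{\exponent_s}h_s$ condition (b) is exactly $\tilde h_i+\tilde h_j+\tilde h_k=0$. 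One then checks directly that
$$
X':=X+\tfrac23\bigl(\tilde h_i\tilde\nu_i+\tilde h_j\tilde\nu_j+\tilde h_k\tilde\nu_k\bigr)
$$
satisfies $X'\cdot\nu_{S_s}=X\cdot\nu_{S_s}+h_s$ for $s\in\{i,j,k\}$, hence $L_i\cap L_j\cap L_k=\{X'\}$, with $|X'-X|\le\tfrac23(|h_i|+|h_j|+|h_k|)=\tfrac43\max_s|h_s|$ (the last equality uses $\tilde h_i+\tilde h_j+\tilde h_k=0$). Finally, at a simple vertex $V$ common to $S_i$ and $S_j$, the $120^\circ$-condition makes $\nu_{S_i},\nu_{S_j}$ linearly independent, so $L_i\cap L_j$ is a single point $V'$, and since $|\nu_{S_i}\cdot\nu_{S_j}|=\tfrac12$ an elementary computation gives $|V'-V|\le 2\max\{|h_i|,|h_j|\}$. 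In all cases every vertex $V$ of $\sS$ acquires a well-defined image $V'$ with $|V'-V|\le 2\max_i|h_i|$; this is the heart of the matter, as (b) is precisely the concurrency condition for the three translated lines at an ic-triple junction.

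Now let $T_i\subset L_i$ be the segment joining the images of the two endpoints of $S_i$ or, if $S_i$ is a half-line --- in which case (a) gives $h_i=0$, so $L_i$ is the line of $S_i$ --- the half-line of $L_i$ from the image of its finite endpoint; orient $T_i$ so that $\nu_{T_i}=\nu_{S_i}$. By construction the $T_i$ have the directions and incidences of the $S_i$, and $S_i\Delta T_i$ is bounded for half-lines; since parallel networks of admissible networks are admissible, $\sT:=\cup_iT_i$ is a simple network parallel to $\sS$ as soon as no $T_i$ degenerates and $\sT$ is embedded. For nondegeneracy, along $L_i$ the tangential displacement of each endpoint of $S_i$ is at most $2\max_i|h_i|\le 2\Delta_1$, so the length of $T_i$ is at least $\cH^1(S_i)-4\Delta_1\ge\cH^1(S_i)-\tfrac{4}{3\sqrt3}\,\cH^1(S_i)>0$, and the image of the far endpoint lies on the correct side of that of the near one. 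For embeddedness, every point of $T_i$ lies within $2\max_i|h_i|$ of the corresponding point of $S_i$, so two distinct pieces of $\sS$ are either disjoint, whence $\dist(T_i,T_j)\ge \dist(S_i,S_j)-4\max_i|h_i|\ge \dist(S_i,S_j)-4\Delta_2=\dist(S_i,S_j)-\tfrac23\min_{S_k\cap S_l=\emptyset}\dist(S_k,S_l)>0$, or share exactly one endpoint, whence $T_i\cap T_j\subseteq L_i\cap L_j$ is their common image vertex; a similar estimate shows that no image vertex lands on a non-incident piece (two distinct vertices of $\sS$, and a vertex and a non-incident piece, are separated by at least $\min\{\min_j\cH^1(S_j),\min_{S_k\cap S_l=\emptyset}\dist(S_k,S_l)\}$, using the $120^\circ$-angles), and the cyclic order of curves around each junction is preserved because the directions are. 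Hence $\sT$ is an embedded simple network parallel to $\sS$, and $\sT\in\Xi(\sS)$ because at each bc-triple junction of $\sS$ the corresponding junction of $\sT$ still sits at $X$ with the three original lines, which by Lemma \ref{lem:preserving_parallelness} makes it a bc-triple junction of $\sT$.

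For uniqueness, if $\sT'=\cup_iT_i'$ is parallel to $\sS$ with $H(S_i,T_i')\cdot\nu_{S_i}=h_i$, then the supporting line of $T_i'$ is forced to be $L_i$, so $T_i'\subset L_i$; each endpoint of $T_i'$ is shared with the other curves meeting it (by the definition of a parallel network), so it lies on the intersection of the corresponding lines $L_\bullet$, which we showed is a single point, namely the image vertex used to define $T_i$. Thus $T_i'=T_i$ for all $i$ and $\sT'=\sT$. The only genuinely delicate step is the embeddedness/combinatorial bookkeeping of the previous paragraph --- ruling out degeneration, self-intersections among non-incident pieces, and changes of adjacency at junctions --- and this is exactly what the explicit constants in the definition \eqref{eq:Delta12} of $\Delta_1,\Delta_2$ are calibrated to guarantee, through the uniform displacement bound $|V'-V|\le 2\max_i|h_i|$.
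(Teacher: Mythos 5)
Your proof is correct and follows essentially the same route as the paper's: translate each supporting line by its signed height, intersect adjacent lines to locate the new vertices, use condition (b) as the concurrency condition for the three translated lines at an ic-triple junction, and use the constants $\Delta_1,\Delta_2$ to rule out degeneration and self-intersection, with uniqueness coming from the uniqueness of the lines $L_i$. The only difference is cosmetic: where the paper invokes its auxiliary Lemmas \ref{lem:distan_vertex}, \ref{lem:injectivity} and \ref{lem:heights_compatibility}, you verify the same facts by direct computation (e.g.\ the explicit formula for the displaced junction $X'$ and the $2\max_i|h_i|$ vertex-displacement bound).
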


Assumption (a) in Theorem \ref{teo:reconstruction} says that any bc-triple junction 
of $\sS$ is also a bc-triple junction for $\sT$. Later in Lemma  \ref{lem:heights_compatibility} we shall see that assumption  (b)  allows us to construct a parallel triple junction. Finally, assumption (c) prevents self-intersections of segments in $\sT,$ see Figure \ref{fig:parallel_things} (c).

We postpone the proof after some auxiliary results.  The following lemma defines the distance between the vertices of two parallel cones of opening angle $120^o,$ knowing the heights between the corresponding parallel lines.

\begin{figure}[htp!]
\includegraphics[width=\textwidth]{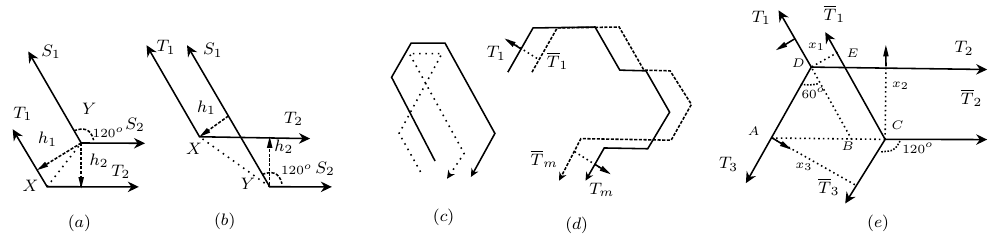}
\caption{\small}
\label{fig:parallel_things}
\end{figure}

\begin{lemma}\label{lem:distan_vertex}
Let $S_1,S_2$ and $T_1,T_2$ be two pairs of segments/half-lines with common starting points $X$ and $Y$ respectively, such that $S_i$ is parallel to $T_i$, the angle between $S_1$ and $S_2$ is $120^o,$ and let $h_i:=H(S_i,T_i)\cdot \nu_{S_i}$ (see Figure \ref{fig:parallel_things} (a)-(b)). Then 
\begin{equation}\label{nujna_rabote}
|XY| = \frac{2}{\sqrt3}\,\sqrt{|h_1|^2 + |h_2|^2 -h_1h_2}.
\end{equation}
\end{lemma}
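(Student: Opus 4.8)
The plan is to reduce everything to the single vector $v:=Y-X$ and to a little linear algebra in the (non‑orthogonal) basis $\{\nu_{S_1},\nu_{S_2}\}$. First I record the elementary fact that
$$
v\cdot\nu_{S_1}=h_1,\qquad v\cdot\nu_{S_2}=h_2 .
$$
Indeed, $X$ lies on the straight line carrying $S_1$ and $Y$ lies on the straight line carrying $T_1$, which is the translate of the former by $H(S_1,T_1)$; since two points of parallel lines differ, along the common normal $\nu_{S_1}$, exactly by the signed distance between the lines, we get $(Y-X)\cdot\nu_{S_1}=H(S_1,T_1)\cdot\nu_{S_1}=h_1$, and likewise with the index $2$.

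Since $S_1$ and $S_2$ are not parallel, $\{\nu_{S_1},\nu_{S_2}\}$ is a basis of $\R^2$; write $v=\lambda_1\nu_{S_1}+\lambda_2\nu_{S_2}$ and set $c:=\nu_{S_1}\cdot\nu_{S_2}$. Taking the scalar product of this identity with $\nu_{S_1}$ and with $\nu_{S_2}$ yields the $2\times2$ system $\lambda_1+c\lambda_2=h_1$, $c\lambda_1+\lambda_2=h_2$, whose solution is $\lambda_1=(h_1-c\,h_2)/(1-c^2)$ and $\lambda_2=(h_2-c\,h_1)/(1-c^2)$ (note $c^2\neq 1$). Hence
$$
|XY|^2=v\cdot v=\lambda_1(v\cdot\nu_{S_1})+\lambda_2(v\cdot\nu_{S_2})=\lambda_1h_1+\lambda_2h_2=\frac{h_1^2+h_2^2-2c\,h_1h_2}{1-c^2}.
$$
It remains to insert the value of $c$: since the rays $S_1,S_2$ meet at $X$ under a $120^o$ angle, with the orientation conventions of Figure~\ref{fig:parallel_things}(a)--(b) the unit normals $\nu_{S_1},\nu_{S_2}$ form a $60^o$ angle, so $c=\cos 60^o=\tfrac12$, $1-c^2=\tfrac34$ and $-2c=-1$; the displayed identity becomes $|XY|^2=\tfrac43\big(h_1^2+h_2^2-h_1h_2\big)$, which is \eqref{nujna_rabote}.

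The only genuinely delicate point is the sign of $c$, i.e.\ whether the angle between the normals is $60^o$ or $120^o$: this is dictated by the orientation of the two segments relative to the common vertex and is exactly what Figure~\ref{fig:parallel_things} fixes. A coordinate‑free alternative that sidesteps this bookkeeping is to drop the perpendiculars $P_1,P_2$ from $Y$ onto the lines carrying $S_1,S_2$: then $X,P_1,Y,P_2$ lie on the circle of diameter $XY$ (right angles at $P_1$ and $P_2$), so $|P_1P_2|=|XY|\sin\theta$ where $\theta\in\{60^o,120^o\}$ is the inscribed angle subtending the chord $P_1P_2$, whence $|XY|=\tfrac{2}{\sqrt3}\,|P_1P_2|$ in either case; and the law of cosines in the triangle $P_1YP_2$, whose sides at $Y$ have lengths $|h_1|,|h_2|$ and whose angle at $Y$ is the supplement of $\theta$, gives $|P_1P_2|^2=h_1^2+h_2^2-h_1h_2$ once the case is matched with the sign of $h_1h_2$ as in Figure~\ref{fig:parallel_things}.
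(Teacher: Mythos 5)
Your proof is correct, and it takes a genuinely different route from the paper's. The paper works with the absolute values $|h_i|$, introduces the angle $\alpha$ between $T_1$ and $[XY]$, applies the law of sines to get $|XY|=|h_1|/\sin\alpha=|h_2|/\sin(120^o-\alpha)$, eliminates $\alpha$ via a cotangent identity to reach $|XY|^2=\tfrac43\bigl(|h_1|^2+|h_2|^2-|h_1||h_2|\bigr)$, and only at the very end converts $|h_1||h_2|$ into $h_1h_2$ using the sign configuration of Figure \ref{fig:parallel_things}\,(a). Your argument instead observes that $v=Y-X$ satisfies $v\cdot\nu_{S_i}=h_i$ and computes $|v|^2=(h_1^2+h_2^2-2c\,h_1h_2)/(1-c^2)$ with $c=\nu_{S_1}\cdot\nu_{S_2}$; this is an exact algebraic identity, valid for every position of $Y$ and every sign pattern of $(h_1,h_2)$, so it dispenses with the case analysis that the paper handles only implicitly (``assume the case of Figure (a)''). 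That uniformity is a real gain. What each approach must still import from the figure is the same single bit of information: the paper needs the sign of $h_1h_2$ in configuration (a), and you need the sign of $c$. Be aware that your justification of $c=\tfrac12$ is the only fragile step: since $\nu=\tau^\perp$ is a rigid rotation of $\tau$, the normals make the \emph{same} angle as the oriented tangents, so $c=\cos 60^o$ holds precisely when the two segments are oriented head-to-tail through the common vertex (as consecutive sides of a polygonal curve, which is how the formula is used at simple vertices in Theorem \ref{teo:reconstruction}); if both were oriented \emph{out} of $X$, as at a triple junction in Lemma \ref{lem:heights_compatibility}, one would have $c=-\tfrac12$ and the identity would read $|XY|^2=\tfrac43(h_1^2+h_2^2+h_1h_2)$. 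This is exactly the orientation bookkeeping the figure is meant to fix, and your closing inscribed-angle argument gives an acceptable orientation-free fallback, so the proof stands.
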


\begin{proof}
Assume without loss of generality the case of Figure \ref{fig:parallel_things} (a), and let $\alpha$ be the angle between $T_1$ and $[XY].$ Then 
$$
|XY| = \tfrac{|h_1|}{\sin\alpha} = \tfrac{|h_2|}{\sin(120^o-\alpha)}.
$$
From the last equality we get 
$$
|h_1|\cot\alpha = |h_1|\cot120^o + \tfrac{|h_2|}{\sin120^o} = -\tfrac{|h_1|}{\sqrt3} + \tfrac{2|h_2|}{\sqrt3}
$$
and hence, 
$$
|XY|^2 = |h_1|^2 + |h_1|^2\cot^2\alpha = |h_1|^2 +\tfrac{(2|h_2| - |h_1|)^2}{3} = \tfrac{4|h_1|^2 + 4|h_2|^2 - 4|h_1||h_2|}{3}.
$$
Since $|h_1||h_2|=h_1h_2$ we get \eqref{nujna_rabote}. 
\end{proof}

Note that if $S$ is an (oriented) segment or half-line and  $h\in \R,$ then there exists a unique straight line $L$ parallel to $S$ such that 
$
H(S,L) \cdot \nu_S = h.
$
However, given $(h_1,\dots, h_n)\in \Rn$ and an oriented polygonal curve $\Gamma,$ consisting of a union of $n\ge2$ segments $S_1,\ldots, S_n,$ not always one can define a polygonal curve $\bar\Gamma:=\cup_{i=1}^n\bar S_i,$ parallel to $\Gamma$, satisfying $H(S_i,\bar S_i) \cdot \nu_{S_i} = h_i$ for all $i=1,\dots,n,$ 
see Figure \ref{fig:parallel_things} (c). Indeed, if some $|h_i|$ are large, then the relative interiors of two of the  $\bar S_i$'s may intersect.

To retain the injectivity of $\bar\Gamma,$ we have to ensure the smallness of all $|h_i|.$ This is done in the next lemma.

\begin{lemma}[\textbf{Injectivity}]\label{lem:injectivity}
Let $\Gamma$ be an oriented polygonal curve consisting of $n$-segments/half-lines $T_1,\ldots,T_n$ with the $120^o$-angle between $T_i$ and $T_{i+1}$ at their common point for $i=1,\ldots,n-1$ and let $\bar T_1,\ldots,\bar T_n$ be the $n$-segments/half-lines such that $T_i$ and $\bar T_i$ are parallel, the endpoint of $\bar T_j$ is an endpoint of $\bar T_{j+1},$ and 
\begin{equation*}
\dist(T_i,\bar T_i) \le \delta^0:=\frac{1}{3\sqrt3}\,\min\limits_{1\le j\le n} \cH^1(T_j)
\end{equation*}
for all $i=1,\ldots,n.$ Then $\cup_{j=2}^{n-1} \bar T_j$ is also a polygonal curve (without self-intersections). 
\end{lemma}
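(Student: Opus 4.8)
The plan is to run a perturbation argument: bound the displacement of the \emph{interior} vertices of $\Gamma$ by means of Lemma~\ref{lem:distan_vertex}, deduce that the interior segments of the perturbed chain stay nondegenerate and correctly oriented (so they locally form an embedded polygonal arc), and then rule out self-intersections between far-apart segments by a distance estimate that is exactly what the constant $\tfrac1{3\sqrt3}$ in the definition of $\delta^0$ is designed to provide.

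First I would estimate the vertices. Denote by $P_1,\dots,P_{n-1}$ the interior vertices of $\Gamma$ ($P_k$ the common endpoint of $T_k$ and $T_{k+1}$) and by $\bar P_1,\dots,\bar P_{n-1}$ the corresponding vertices of the chain $\bar T_1,\dots,\bar T_n$, with $\bar P_0,\bar P_n$ the outer endpoints. Since $h_k:=H(T_k,\bar T_k)\cdot\nu_{T_k}$ satisfies $|h_k|\le\dist(T_k,\bar T_k)\le\delta^0$, Lemma~\ref{lem:distan_vertex} applied at each $P_k$ gives
$$
|P_k-\bar P_k|=\tfrac{2}{\sqrt3}\sqrt{|h_k|^2+|h_{k+1}|^2-h_kh_{k+1}}\le \tfrac2{\sqrt3}\sqrt{3(\delta^0)^2}=2\delta^0.
$$
Consequently, for $2\le k\le n-1$ the segment $\bar T_k=[\bar P_{k-1}\bar P_k]$ lies in the closed $2\delta^0$-neighbourhood of $T_k$ (it joins two points in the $2\delta^0$-balls around the endpoints of $T_k$), it is nondegenerate because $\cH^1(\bar T_k)\ge\cH^1(T_k)-4\delta^0\ge(1-\tfrac4{3\sqrt3})\min_j\cH^1(T_j)>0$, and projecting $\bar P_k-\bar P_{k-1}$ onto the direction of $T_k$ shows it carries the same orientation as $T_k$. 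By parallelness the angle at $\bar P_k$ between $\bar T_k$ and $\bar T_{k+1}$ is again $120^o$, so consecutive interior segments meet only at $\bar P_k$; hence $\bigcup_{j=2}^{n-1}\bar T_j$ is at least \emph{locally} an embedded polygonal arc. (The extreme segments $\bar T_1,\bar T_n$ are excluded precisely because $\bar P_0,\bar P_n$ are unconstrained, so their lengths cannot be controlled.)

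There remains the global statement: no intersection $\bar T_i\cap\bar T_j\ne\emptyset$ with $2\le i$, $j\le n-1$ and $j\ge i+2$. Since $\bar T_i$ and $\bar T_j$ lie in the $2\delta^0$-neighbourhoods of $T_i$ and $T_j$, such an intersection would force $\dist(T_i,T_j)\le 4\delta^0=\tfrac4{3\sqrt3}\min_k\cH^1(T_k)$. Thus it suffices to prove the purely geometric fact that non-consecutive segments of an embedded $120^o$-polygonal curve satisfy $\dist(T_i,T_j)\ge\tfrac{\sqrt3}{2}\min_k\cH^1(T_k)$; this beats $4\delta^0$ since $\tfrac{\sqrt3}{2}>\tfrac4{3\sqrt3}$ (equivalently $9>8$), which is exactly the role of the factor $3\sqrt3$. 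For $j=i+2$ this is immediate and needs no embeddedness: the common vertex of $T_i,T_{i+1}$ being a $120^o$ vertex, $T_{i+1}$ leaves the line of $T_i$ at $60^o$ and the only admissible turn into $T_{i+2}$ keeps the curve on the same side, so all of $T_{i+2}$ sits at distance $\ge\tfrac{\sqrt3}{2}\cH^1(T_{i+1})$ from that line. For $j\ge i+3$ one invokes the rigidity that every segment of a $120^o$-polygonal curve is parallel to one of three fixed lines pairwise at $60^o$, and tracks the signed height of $\Gamma$ over the line of $T_i$ (piecewise linear in arclength, slopes in $\{0,\pm\tfrac{\sqrt3}{2}\}$, nonzero on $T_{i+1}$): embeddedness forbids the curve from recrossing $T_i$, which forces any later segment returning towards $T_i$ to remain at distance $\ge\tfrac{\sqrt3}{2}\min_k\cH^1(T_k)$ from it. I expect this last step to be the main obstacle — the $j=i+2$ case is a one-line computation, but the general case requires a careful case analysis (or induction) on the turning sequence to turn the three-direction rigidity plus embeddedness into the uniform lower bound, whereas everything else is routine bookkeeping with Lemma~\ref{lem:distan_vertex}.
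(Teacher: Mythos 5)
Your first two steps (the $2\delta^0$ bound on the displacement of the interior vertices via Lemma \ref{lem:distan_vertex}, and the resulting nondegeneracy and orientation‑preservation of $\bar T_2,\dots,\bar T_{n-1}$) are correct, but the global step rests on a claim that is false. It is not true that non‑consecutive segments of an embedded $120^o$-polygonal curve satisfy $\dist(T_i,T_j)\ge\tfrac{\sqrt3}{2}\min_k\cH^1(T_k)$: take a hexagonal near‑spiral, e.g.\ seven unit segments turning by $60^o$ in the same sense at every vertex, with one side lengthened to $1+\delta$ so that the seventh segment is parallel to the first at perpendicular distance $\tfrac{\sqrt3}{2}\delta$. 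This curve is embedded, all its angles are $120^o$, all its lengths lie in $[1,1+\delta]$, yet $\dist(T_1,T_7)=\tfrac{\sqrt3}{2}\delta$ can be made arbitrarily small compared with $\min_k\cH^1(T_k)$. So the three‑direction rigidity plus embeddedness cannot yield the uniform separation you need for $j\ge i+3$ (your $j=i+2$ computation is fine), and the case analysis you anticipate as "the main obstacle" cannot be completed along these lines. In the paper this difficulty is not handled inside the lemma at all: where the lemma is applied (Theorem \ref{teo:reconstruction}), the separation of far‑apart segments is enforced by the \emph{additional} smallness constant $\Delta_2$, built from $\min_{S_i\cap S_j=\emptyset}d(S_i,S_j)$, not from the $120^o$ structure.

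The paper's own proof of the lemma is also genuinely different in mechanism. It invokes the principle that, along a continuous deformation of a polygonal chain of this type, a self‑intersection can only first appear after some segment's length has shrunk to zero, and then reduces everything to a length estimate: the explicit formula \eqref{length_change}, $\cH^1(\bar T_i)=\cH^1(T_i)-\tfrac{2(h_{i-1}-h_i+h_{i+1})}{\sqrt3}$, together with $|h_j|\le\delta^0$ gives $\cH^1(\bar T_i)\ge\tfrac13\cH^1(T_i)>0$ for $2\le i\le n-1$. Your nondegeneracy bound $\cH^1(\bar T_k)\ge\cH^1(T_k)-4\delta^0$ is a cruder substitute for \eqref{length_change} and would serve the same purpose, but without the "self‑intersections are preceded by a collapsing segment" principle (or some replacement for it) the positivity of the lengths alone does not rule out crossings between distant segments, which is exactly where your argument breaks down.
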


Observe that we cannot state the injectivity of $\cup_{j=1}^n \bar T_j,$ since a priori we have no information on $\bar T_1$ and $\bar T_n.$

\begin{proof}
We only need to show that $\Sigma:=\cup_{j=2}^{n-1} \bar T_j$ has no self-intersections. Recalling that self-intersections start after some segment disappears, it is enough to show that any segment in the union $\Sigma$ of segments satisfying the assumptions of the lemma has positive length. Note that by the parallelness, $\nu_{T_i} = \nu_{\bar T_i}$ and let 
$$
h_i:= H(T_i,\bar T_i)\cdot \nu_{T_i},\quad i=1,\ldots,n.
$$
Direct computations show (see Figure \ref{fig:parallel_things} (d) and also \cite[Eq. 3.3]{GP:2022})  that 
\begin{equation}\label{length_change}
\cH^1(\bar T_i) = \cH^1(T_i) - \tfrac{2h_{i-1}-2h_i+2h_{i+1}}{\sqrt3}
\end{equation}
for $i=2,\ldots,n-1.$ Thus, if $|h_j|\le \delta^0$ for all $1\le j\le n,$ then 
$$
\Big|\tfrac{2h_{i-1}-2h_i+2h_{i+1}}{\sqrt3}\Big|\le 2\sqrt3\delta^0\le \tfrac{2}{3}\cH^1(T_i),
$$
for all $2\le i\le n-1,$ and hence,
$$
\cH^1(\bar T_i) \ge \tfrac{1}{3}\,\cH^1(T_i)>0,
$$
and thus, $\Sigma$ cannot have self-intersections.
\end{proof}

Lemma \ref{lem:injectivity} has a further implication: if $\Gamma:=\cup_{i=1}^n T_i$ is the polygonal curve in Lemma \ref{lem:injectivity} and $(h_1,\ldots,h_n)$ is an $n$-tuple of real numbers satisfying $|h_i| \le \delta^0,$ then there exists a unique polygonal curve $\Sigma:=\cup_{i=2}^{n-1}\bar T_i$  with $\bar T_i$ parallel to $T_i$ and $H(T_i,\bar T_i)\cdot \nu_{T_i} = h_i$ for any $i=2,\ldots,n-1.$ We can also define $\bar T_1$ and $\bar T_n,$ parallel to $T_1$ and $T_n,$ respectively, with $H(T_i,\bar T_i)\cdot \nu_{T_i} = h_i$ for $i\in \{1,n\},$ however, $\bar T_1 $ and $\bar T_n$ are not uniquely defined (because there is no information on their length).

Next, we study how distances behave in parallel triple junctions.

\begin{lemma}[\textbf{Heights compatibility}]\label{lem:heights_compatibility}
Let $T_1,T_2,T_3$ be segments/half-lines parallel to some segments of $\Wulff$ and forming a triple junction with $120^o$-angles as in Figure \ref{fig:parallel_things} (e) and oriented out of their triple junction. Let $\bar T_1,\bar T_2,\bar T_3$ be another triple of segments/half-lines forming a triple junction such that $T_i$ and $\bar T_i$ are parallel (and so $\nu_{T_i}=\nu_{\bar T_i}$). 
Then the corresponding heights $h_i:=H(T_i,\bar T_i)\cdot \nu_{T_i},$ $ i=1,2,3,$ satisfy 
\begin{equation}\label{heights_compatibility}
h_1 + h_2 + h_3 = 0.
\end{equation}
Conversely, if real numbers $h_1,h_2,h_3$ satisfy \eqref{heights_compatibility}, then there exists a unique triple $\bar T_1,\bar T_2,\bar T_3$ of half-lines such that $\bar T_i$ is parallel to $T_i$ and $H(T_i,\bar T_i)\cdot \nu_{T_i} = h_i.$
\end{lemma}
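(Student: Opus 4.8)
The plan is to establish the forward implication \eqref{heights_compatibility} by a translation-invariance argument, and the converse by an explicit reconstruction of the three half-lines from the prescribed heights. For the forward direction, denote by $V$ the common vertex of $T_1,T_2,T_3$ and by $\bar V$ the common vertex of $\bar T_1,\bar T_2,\bar T_3$, and set $w:=\bar V - V$. Since $\bar T_i$ lies on a line parallel to $T_i$ and passes through $\bar V = V + w$, the line carrying $\bar T_i$ is exactly the line carrying $T_i$ translated by $w$; consequently the signed height from $T_i$ to $\bar T_i$ is the scalar projection of this translation onto $\nu_{T_i}$, i.e. $h_i = w\cdot \nu_{T_i}$ for $i=1,2,3$. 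Thus \eqref{heights_compatibility} reduces to the purely geometric identity $w\cdot(\nu_{T_1}+\nu_{T_2}+\nu_{T_3})=0$ for \emph{every} vector $w$, which in turn follows once we check $\nu_{T_1}+\nu_{T_2}+\nu_{T_3}=0$. This last fact is exactly the $120^\circ$ balance condition: three unit vectors making pairwise angles of $120^\circ$ (the outward normals to three segments through a common point, all exiting the junction, meeting at $120^\circ$ angles) sum to zero, as one sees by writing them as $e^{i\theta}, e^{i(\theta+2\pi/3)}, e^{i(\theta-2\pi/3)}$ in complex notation and summing the geometric series $1+\omega+\omega^2=0$ with $\omega=e^{2\pi i/3}$.

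For the converse, suppose $h_1+h_2+h_3=0$. For each $i$, by the remark preceding Lemma \ref{lem:distan_vertex} there is a unique line $L_i$ parallel to $T_i$ with $H(T_i,L_i)\cdot\nu_{T_i}=h_i$. I claim $L_1,L_2,L_3$ have a common point. The two lines $L_1,L_2$ are not parallel (since $T_1,T_2$ meet at $120^\circ$), so they intersect in a unique point $\bar V$; define $w:=\bar V - V$, where $V$ is the common vertex of the $T_i$. By construction $w\cdot\nu_{T_1}=h_1$ and $w\cdot\nu_{T_2}=h_2$. Since $\{\nu_{T_1},\nu_{T_2}\}$ is a basis of $\R^2$ and $\nu_{T_3}=-\nu_{T_1}-\nu_{T_2}$ by the balance identity above, we get $w\cdot\nu_{T_3} = -h_1-h_2 = h_3$, using the hypothesis. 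Hence the translate of $L$-line carrying $T_3$ through $\bar V$ is precisely $L_3$, i.e. $\bar V\in L_3$ as well. Now set $\bar T_i$ to be the half-line on $L_i$ starting at $\bar V$ with the same direction (same unit tangent) as $T_i$; these three half-lines form a triple junction at $\bar V$, each is parallel to the corresponding $T_i$, and $H(T_i,\bar T_i)\cdot\nu_{T_i}=w\cdot\nu_{T_i}=h_i$. Uniqueness is immediate: any admissible triple must have its vertex on $L_1\cap L_2=\{\bar V\}$, after which each half-line is determined by its supporting line and direction.

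I do not anticipate a serious obstacle here; the statement is essentially linear algebra once the normals-sum-to-zero identity is isolated. The only point requiring mild care is bookkeeping of signs and orientations: one must be consistent about the orientation convention (tangents in the direction of parametrization, $\nu=\tau^\perp$) so that "oriented out of the triple junction" produces the $120^\circ$-balanced normals $\nu_{T_1}+\nu_{T_2}+\nu_{T_3}=0$ rather than a sign-flipped version, and so that "signed height" is read as $w\cdot\nu_{T_i}$ with the correct sign. A clean way to package this is to prove once and for all the small claim that, for two parallel oriented segments $S,\bar S$ whose supporting lines differ by a translation $w$, one has $H(S,\bar S)\cdot\nu_S = w\cdot\nu_S$; then both directions of the lemma become one-line consequences of $\sum_i\nu_{T_i}=0$ and the fact that $\nu_{T_1},\nu_{T_2}$ span $\R^2$.
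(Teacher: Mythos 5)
Your proof is correct, and it takes a genuinely different route from the paper's. For the forward implication the paper works directly on Figure \ref{fig:parallel_things} (e): it fixes a sign configuration ($h_3\ge 0$, $h_1,h_2\le 0$), computes the length of the auxiliary segment $[AC]$ in two ways using the $120^o$-angles (essentially a law-of-sines computation), obtains $|h_3|=|h_1|+|h_2|$, and remarks that ``the proof in the other cases is similar.'' You instead observe that if $w:=\bar V - V$ is the displacement of the junction, then every height is the projection $h_i=w\cdot\nu_{T_i}$, so \eqref{heights_compatibility} is just $w\cdot(\nu_{T_1}+\nu_{T_2}+\nu_{T_3})=0$, which holds because three unit vectors at pairwise $120^o$-angles sum to zero. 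This buys you a uniform, case-free argument (no sign bookkeeping, since the signed heights are handled automatically by the inner product) at the cost of having to be slightly careful that $w\cdot\nu_{T_i}$ really computes the signed height as defined via the distance vector --- which it does, since any vector translating the line of $T_i$ onto the line of $\bar T_i$ has the same normal component. For the converse, the two arguments are essentially equivalent: the paper intersects the first two translated lines and invokes the forward direction to pin down the third height, while you verify $\bar V\in L_3$ directly from $\nu_{T_3}=-\nu_{T_1}-\nu_{T_2}$; both give existence and uniqueness for the same reason ($L_1\cap L_2$ is a single point).
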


Thus, the knowledge of two heights at a triple junction allows to determine uniquely the third one. Note that if any of $T_i$ oriented towards to the triple junction, then the corresponding height $h_i$ in \eqref{heights_compatibility} appears with the ``--'' sign.

\begin{proof}
By symmetry, we may assume that $T_i$ and $\bar T_i$ are as in Figure \ref{fig:parallel_things} (e), i.e., $h_3\ge 0$, $h_1 \leq 0$, $h_2 \leq 0$ and let $x_i = |h_i|.$ Then 
\begin{equation}\label{eq:H1_AC}
\cH^1([AC]) = \tfrac{x_3}{\sin60^o} = \tfrac{2x_3}{\sqrt3}.
\end{equation}
Similarly, 
\begin{equation}\label{eq:H1_AC_bis}
\cH^1([AC]) =\cH^1([AB]) + \cH^1([BC]) = \cH^1([EC]) + \cH^1([DE]) =\tfrac{2x_2}{\sqrt3} + \tfrac{2x_1}{\sqrt3},
\end{equation}
and hence, from \eqref{eq:H1_AC} and \eqref{eq:H1_AC_bis} it follows
$
x_3 = x_1+x_2
$
i.e., \eqref{heights_compatibility}. The proof in the other cases is similar. 

To prove the last assertion let us take two half-lines $\bar T_1,\bar T_2$ starting from common point $X,$ parallel to $T_1,T_2,$ respectively, and satisfying $H(T_i,\bar T_i)\cdot \nu_{T_i} = h_i$ for $i=1,2.$ Let $\bar T_3$ be any segment/half-line starting from $X$ and parallel to $T_3$ and define $h_3':=H(T_3,\bar T_3)\cdot \nu_{T_3}.$ By the first part of the proof $h_1+h_2+h_3'=0.$ On the other hand, by assumption \eqref{heights_compatibility}, $h_1+h_2+h_3=0,$ and thus, $h_3=h_3'.$  
\end{proof}

Now we are ready to construct parallel networks.

\subsection{Proof of Theorem \ref{teo:reconstruction}} 

{\it Step 1.} For each $i=1,\ldots,n,$ let $L_i$ be the straight line parallel to $S_i$ and satisfying $H(S_i,L_i)\cdot \nu_{S_i} = h_i.$ 

We define subsets $T_i$ of $L_i$ as follows. First consider any half-line $S_i$ of $\sS$ and let $S_j$ be any other segment/half-line of $\sS$ having a common endpoint with $S_i.$ Then the lines $L_i$ and $L_j$ intersect at a unique point separating both lines into two half-lines. Let $T_i\subset L_i$ be the one parallel to $S_i.$ By assumption $h_i=0$ so that by construction both $T_i$ and $S_i$ lie on the same line $L_i.$ 
Thus, by parallelness, $T_i\Delta S_i$ is bounded. 

Next, let $S_j$ be a segment and  $S_i,S_j,S_k$ be a polygonal line (in the same order). Since the angles between the common points of $S_i,S_j$ and $S_j,S_k$ are $120^o,$ the lines $L_i$ and $L_k$ cut from $L_j$ a segment, which we call $T_j.$ Notice that a priori we do not know if $T_j$ is parallel to $S_j$ (because it could be oriented oppositely). We repeat this argument with each segment $S_j$ of $\sS$ and construct all segments $T_j\subset L_j.$ 
\smallskip

{\it Step 2.} Let us find some estimates for $\{T_i\}.$ First observe that by construction if $S_i$ and $S_j,$ $i\ne j,$ have a common vertex, then so do $T_i$ and $T_j.$ Let $\{X\}=S_i\cap S_j$ and $\{Y\}=T_i\cap T_j.$ By Lemma \ref{lem:distan_vertex} and the definition of $\Delta_2,$
$$
|XY| = \tfrac{2}{\sqrt3}\sqrt{|h_i|^2+|h_j|^2-h_ih_j} \le 2\max\{|h_i|,|h_j|\} \le 2\Delta_2 \le \tfrac{1}{3}\min_{S_k\cap S_l=\emptyset}\, d(S_k,S_l).
$$
In particular, 
\begin{equation}\label{samouverennost0}
d(S_i,T_i),d(S_j,T_j)\le |XY| \le \tfrac{1}{3}\min_{S_k\cap S_l=\emptyset}\, d(S_k,S_l).
\end{equation}

We claim that if $S_i\cap S_j=\emptyset,$ then $T_i\cap T_j=\emptyset.$ Indeed, by \eqref{samouverennost0} and the triangle inequality if $S_i\cap S_j=\emptyset,$ then 
$$
d(S_i,S_j) \le d(S_i,T_i) + d(T_i,T_j)+d(T_j,S_j) \le d(T_i,T_j) + \tfrac{2}{3}d(S_i,S_j),
$$
and hence, 
$
d(T_i,T_j) \ge \tfrac{1}{3}d(S_i,S_j)>0.
$
\smallskip

{\it Step 3.} Let $\Gamma:=S_{i_1}\cup\ldots\cup S_{i_m}$ be any polygonal curve of $\sS$ (recall that $\sS$ is a finite union of polygonal curves). We claim that the union $\Gamma':=S_{i_1}\cup\ldots\cup S_{i_m}$ is also a polygonal curve. Indeed, by the definition of $\Delta_1$ and assumption (c) we can apply Lemma \ref{lem:injectivity} to get that the union $\Gamma'':=S_{i_2}\cup\ldots\cup S_{i_{m-1}}$ is a polygonal curve without self-intersections. By step 2 we know that $T_{i_1}$ resp. $T_{i_m}$ have a common vertex with only $T_{i_2}$ resp. $T_{i_{m-1}},$ and both $T_{i_1}$ and $T_{i_m}$ does not intersect the interior of $\Gamma''.$ 

Now if $S_{i_1}\cap S_{i_m} = \emptyset,$ then by step 2, $T_{i_1}\cap T_{i_m} = \emptyset,$ and hence, $\Gamma'$ is injective. On the other hand, if $S_{i_1}$ and $S_{i_m}$ have a common vertex so that $\Gamma$ is a closed curve, then by step 2 so do $T_{i_1}$ and $T_{i_m}.$ Since these segments do not intersect the interior of $\Gamma''$ and $T_{i_2}\cap T_{i_{m-1}}=\emptyset$ (in case $m\ge4$), $\Gamma'$ 
is also a closed injective curve.
\smallskip

{\it Step 4.} Let $S_i,S_j,S_k$ form a triple junction $\triplejunction.$ If $\triplejunction$ is a bc-triple junction, then by assumption $h_i=h_j=h_k=0,$ and therefore, by construction $\triplejunction$ is a triple junction of $T_i,T_j,T_k.$ On the other hand, if $\triplejunction$ is a ic-triple junction, then by assumption (b) and Lemma \ref{lem:heights_compatibility} $T_i,T_j,T_k$ also form an ic-triple junction.
\smallskip

{\it Step 5.} Let $\Gamma_1$ and $\Gamma_2$ be any curves of $\sS$ and let $\Gamma_1'$ and $\Gamma_2'$ be two corresponding curves in $\sT$ (defined in step 3). Since $\Gamma_1$ and $\Gamma_2$ can intersect only at the endpoints, by step 2 the interiors of $\Gamma_1'$ and $\Gamma_2'$ cannot have a common point. Thus, $\sT$ is a network parallel to $\sS.$

Since $h_i=0$ if $S_i$ is a half-line or a segment at a bc-triple junction, $\sT\in\Xi(\sS)$. Finally, the uniqueness of $\sT$ follows from the uniqueness of lines $\{L_i\}.$
\qed

\subsubsection{Expression of some quantities of parallel networks by a given set of heights}

Given a simple network $\sS:=\cup_{i=1}^nS_i,$ let $\Delta_1,\Delta_2>0$ be as in \eqref{eq:Delta12}.  Consider arbitrary real numbers $\{h_j\}_{j=1}^{n}$ satisfying assumptions (a)-(c) of Theorem \ref{teo:reconstruction} so that there exists a unique $\bar \sS:=\cup_{i=1}^n \bar S_i \in\Xi(\sS)$ such that $H(S_i,\bar S_i)\cdot \nu_{S_i} = h_i$ for any $i=1,\ldots,n.$
By \eqref{length_change} for any segment $\bar S_i$ of $\bar\sS,$ we have
\begin{equation}\label{sochi_sunbul98r}
\cH^1(\bar S_i) = \cH^1(S_i) - \tfrac{2}{\sqrt3}\sum_{k=1}^n \beta_k h_k,
\end{equation}
where $\beta_k\in \{0,\pm1\}$ and only three of them are nonzero and depend only on the orientation of the segments/half-lines of $\sS.$

Next, let us study the $\phi$-curvature of the segments of $\bar \sS.$ Namely, we claim that there exist $\Delta_3>0$ and real-analytic functions $\{u_i\}_{i=1}^n$ in $(-2\Delta_3,2\Delta_3)^n$ depending only on $\sS$, such that 
\begin{equation}\label{nice_K_k}
|u_i(h_1,\ldots,h_n)| \le \gamma_0\sum_{j=1}^n|h_j|,\quad 
|u_i(h_1',\ldots,h_n')-u_i(h_1'',\ldots,h_n'')| \le \gamma_0\sum_{j=1}^n |h_j'-h_j''|
\end{equation}
for some $\gamma_0=\gamma_0(\sS)>0$ and all $\{h_j,h_j',h_j''\}$ with $|h_j|,|h_j'|,|h_j''|\le \min\{\Delta_1,\Delta_2,\Delta_3\},$ for which
\begin{equation}\label{curvatre_changes}
\kappa_{\bar S_i}^\phi = \kappa_{S_i}^\phi + u_i(h_1,\ldots,h_n).
\end{equation}

Indeed, let $G$ resp. $\bar G$ be a connected graph associated to $\sS$ resp. $\bar \sS$ as in Lemma \ref{lem:graph_triple} containing at least one triple junction. By that lemma we know that all triple junctions are either ic-triple junctions or bc-triple junctions, simultaneously.
If $G$ contains only ic-triple junctions, then by Lemma \ref{lem:preserving_parallelness} so does $\bar G$ and hence, by Lemma \ref{lem:graph_triple} all segments/half-lines $S_i$ and $\bar S_i$ in both $G$ and $\bar G$ have zero $\phi$-curvature, and in this case we define $u_i\equiv0$ in \eqref{curvatre_changes}.
Therefore, we may assume that $G$ contains only ic-triple junctions. Write $G = \cup_{l=1}^m S_{k_l}$ and $\bar G = \cup_{l=1}^m \bar S_{k_l},$ and let the quadratic functions $\psi$ and $\bar\psi$ be defined as in \eqref{function_psi_qara} and associated to $\sS$ and $\bar \sS,$ respectively. By stability, the minimizers $x^0$ and $\bar x^0$ of $\psi$ and $\bar\psi$ lie in $(0,2/\sqrt3)^r,$ where $r$ is the number of the triple junctions in $G,$ and thus, solves the nondegenerate linear systems $\nabla \psi(x^0)=0$ and $\nabla \bar\psi(\bar x^0)=0.$ 
In particular, there exists an analytic function $\Psi$ depending only on $\sS$ such that 
$$
x^0 = \Psi\Big(\tfrac{1}{\cH^1(S_{k_1})},\ldots,\tfrac{1}{\cH^1(S_{k_m} )} \Big),\qquad 
\bar x^0 = \Psi\Big(\tfrac{1}{\cH^1(\bar S_{k_1})},\ldots,\tfrac{1}{\cH^1(\bar S_{k_m} )} \Big).
$$
By \eqref{sochi_sunbul98r} and the analyticity of $\Psi$ there exists $\Delta_3>0$ depending only on $\{\cH^1(S_{k_l})\}_{l=1}^m$ and the structure of $\sS$ such that 
\begin{equation*}
\bar x^0 = x^0 + \widehat \Psi(h_1,\ldots,h_n),\qquad \sup_{1\le j\le n}|h_j|\le \Delta_3,
\end{equation*}
where $\widehat\Psi$ is a real-analytic (vector-valued) function in $(-2\Delta_3,2\Delta_3)^n.$ This representation implies the existence of a family $\{u_{k_l}\}_{l=1}^m$ of real-analytic functions in $(-2\Delta_3,2\Delta_3)^n$ which satisfies \eqref{curvatre_changes}.

Recall that by the $\phi$-curvature-balance condition of Lemma \ref{lem:sum_curvature_balance}, if $S_i,S_j,S_k$ form an ic-triple junction, then
\begin{equation}\label{curvature_valvalva}
(-1)^{\exponent_i} u_i + (-1)^{\exponent_j} u_j +(-1)^{\exponent_k} u_k \equiv0,
\end{equation}
where $\exponent_i,\exponent_j,\exponent_k$ are $0$ or $1$ depending on whether the corresponding segment enters to or exits from the triple junction.

\section{Proof of Theorem \ref{teo:short_time}}
\label{sec:proof_of_theorem}

{\it Step 1.} Fix a simple network $\initialnetwork:=\cup_{i=1}^{n}S_i^0,$ let $\Delta_1,\Delta_2>0$ be as in \eqref{eq:Delta12}, applied with $\sS=\initialnetwork,$ and let $\Delta_3>0$ and real-analytic functions $\{u_i\}_{i=1}^n$ (depending only $\initialnetwork$) be as above satisfying \eqref{nice_K_k}, \eqref{curvatre_changes} and \eqref{curvature_valvalva}. Define
$$
\Delta_0:=\min\{\Delta_1,\Delta_2,\Delta_3\}>0.
$$

For any $T>0$ let $\cB_T$ be the collection of all $n$-tuples $h:=(h_1,\ldots,h_n)$ of continuous functions in $[0,T]$ such that 
\begin{itemize}
\item[(1)] $h_i(0)=0$ for all $i=1,\ldots,n,$

\item[(2)] if $S_i$ is a half-line or a segment at a bc-triple junction, then $h_i\equiv0$ in $[0,T],$ 

\item[(3)] if $S_i^0,S_j^0,S_k^0$ form an ic-triple junction, then 
\begin{equation*}
(-1)^{\exponent_i} h_i + (-1)^{\exponent_j} h_j +(-1)^{\exponent_k} h_k = 0\quad \text{in $[0,T]$},
\end{equation*}
where $\exponent_i,\exponent_j,\exponent_k$ are $0$ or $1$ depending on whether the corresponding segment/half-line enters to or exits from the triple junction.
\end{itemize}
Since the assumptions (1)-(3)  are linear, $\cB_T$ is a Banach space with respect to the norm 
$$
\|h\|_\infty:=\sum\limits_{i=1}^n\,\max_{t\in[0,T]}\,|h_j(t)|.
$$
Let 
$$
T_0:=\tfrac{1}{n}\,\min\Big\{\tfrac{\Delta_0}{1+ \max_i |\kappafi_{S_i^0}| + \gamma_0\Delta_0}, \tfrac{1}{1+\gamma_0}\Big\}>0,
$$
where $\gamma_0$ is as in \eqref{nice_K_k}, and
$$
\cK:=\{h\in\cB_{T_0}:\,\, \|h\|_\infty\le\Delta_0\}.
$$
Clearly, $\cK$ is a closed convex subset of $\cB_{T_0}.$ For any $h\in \cK$ and $i=1,\ldots,n$ let us define
$$
\Phi_i[h](t) = -\int_0^t \Big(\kappafi_{S_i^0} + u_i(h_1(s),\ldots,h_n(s))\Big)\,d s, \qquad t \in [0,T_0].
$$
Note that $\Phi:=(\Phi_1,\ldots,\Phi_n)$ maps $\cK$ into itself. Indeed, clearly, $\Phi_i[h]\in C^0[0,T_0]$ and $\Phi_i[h](0)=0$ for each $i=1,\ldots,n$ and all $h\in\cK.$ If $S_i^0$ is a half-line or a segment at a bc-triple junction, then $\kappa_{S_i^0}^\phi=0$ and by definition $u_i\equiv0,$ hence, $\Phi_i[h]\equiv0$ for all $i=1,\ldots,n.$ 
Next, if $S_i^0,S_j^0,S_k^0$ form a ic-triple junction, then by the $\phi$-curvature-balance condition \eqref{sum_kurvacha0} and \eqref{curvature_valvalva} 
$$
(-1)^{\exponent_i} \Phi_i[h] + (-1)^{\exponent_j} \Phi_j[h] + (-1)^{\exponent_k} \Phi_k[h] \equiv0\qquad\text{in $[0,T]$}.
$$
Furthermore, by the definition of $T_0$,
$$
\|\Phi_i[h]\|_\infty \le \Big(|\kappafi_{S_i^0}| + \gamma_0 \|h\|_\infty\Big)T_0 \le \tfrac{1}{n}\,\Delta_0,\qquad h\in \cK,
$$
and hence, $\|\Phi[h]\|_\infty\le \Delta_0.$ 
In particular, $\Phi[h]\in\cK$ whenever $h\in\cK.$

Finally, since $\kappa_{S_i^0}^\Phi$ is constant, by the second relation in \eqref{nice_K_k}
$$
\|\Phi_i[h] - \Phi_i[\bar h]\|_\infty \le \gamma_0 \|h-\bar h\|_\infty T_0,\quad h,\bar h\in\cK,
$$
and therefore, by the definition of $T_0$
$$
\|\Phi[h] -\Phi[\bar h]\|_\infty \le \tfrac{\gamma_0}{1+\gamma_0}\,\|h-\bar h\|_\infty,\qquad h,\bar h\in \cK,
$$
and $\Phi$ is a contraction in $\cK.$ By the Banach fixed-point theorem, there exists a unique $h\in\cK$ such that $\Phi[h] = h$ in $[0,T_0].$ By the definition of $\Phi$ and the analyticity of $u_i,$ for each $i=1,\ldots,n$ we have $h_i\in C^\infty[0,T_0]$ and 
\begin{equation}\label{mukazzibin}
h_i'= - \kappa_{S_i^0}^\phi - u_i(h_1,\ldots,h_n)\quad \text{in $[0,T_0]$}.
\end{equation}

In view of assumptions (1)-(3) on $\cB_{T_0}$ and inequality $|h_i(t)|\le\Delta_0\le \min\{\Delta_1,\Delta_2\}$ for all $i=1,\ldots,n$ and $t\in[0,T_0]$ we can apply Theorem \ref{teo:reconstruction} to find a network $\sS(t)\in\Xi(\initialnetwork)$ parallel to $\initialnetwork.$ Since $|h_i(t)|\le \Delta_3,$ to compute the $\phi$-curvatures of $S_i(t)$ we can use \eqref{curvatre_changes}, which combined with \eqref{mukazzibin} gives 
$$
h_i' = -\kappa_{S_i(t)}^\phi\quad\text{in $[0,T_0].$}
$$
Since $h_i(0)=0$ for all $i=1,\ldots,n,$ $\sS(0) = \initialnetwork,$ and therefore, $\sS(\cdot)$ is the $\phi$-curvature flow starting from $\initialnetwork.$
The uniqueness of $\{\sS(\cdot)\}$ in $[0,T_0]$ follows from the uniqueness of the fixed point.
\smallskip

{\it Step 2.} Let $\maximaltime$ be the maximal time for which the regular flow $\sS(t)$ starting from $\initialnetwork$ exists for all times $t\in[0,\maximaltime).$ We have two possibilities:

\begin{itemize}
\item $\maximaltime=+\infty,$ i.e., the flow $\sS(t)$ exists for all times $t\ge0;$ 

\item $\maximaltime<+\infty.$ In this case, 
$$
\liminf_{t\nearrow \maximaltime} S_i(t) = 0
$$
for some $i=1,\ldots,n.$ Indeed, otherwise the limit network $\sS(\maximaltime)$ (for instance defined as a Kuratowski limit of sets) would be simple, and the heights $h_i\in C^1[0,\maximaltime].$ 
In particular, we could apply step 1 with $\initialnetwork:=\sS(\maximaltime-\epsilon)$ for sufficiently small $\epsilon>0$ and find $T_0>0$ independent of $\epsilon$ such that the regular flow $\sS(\cdot)$ exists also in $[\maximaltime-\epsilon,\maximaltime + T_0].$ But this contradicts to the maximality of $\maximaltime.$
\end{itemize}
\qed

\begin{remark}\label{rem:nice_symmetry}
The (geometric) uniqueness of a $\phi$-regular curvature flow starting from a simple network implies, remarkably, that the flow \emph{preserves the (axial, rotational and mirror) symmetries} of the initial network.  
\end{remark}

\subsection{Some extensions of Theorem \ref{teo:short_time} to networks with multiple junctions}\label{sec:some_extensions}

In view of the proof of Theorem \ref{teo:short_time}, its assertions remain valid even if the initial network has  junctions with degree $\geq 3$ provided that the concurring segments/half-lines have zero $\phi$-curvature. To this aim let us call an admissible network $\sS$ \emph{simple with multiple junctions} if either segments/half-lines form a triple junction with $120^o$ or a $m\ge4$-junction with zero $\phi$-curvature.

\begin{theorem}\label{teo:chalpak_flow}
Let $\initialnetwork$ be any simple network with multiple junctions. Then there exists $\maximaltime \in (0, +\infty]$ and a unique family $\{\sS(t)\}_{t\in[0,\maximaltime)}$ of parallel networks such that $\sS(\cdot)$ is the $\phi$-curvature flow starting from $\initialnetwork.$ Moreover, if $\maximaltime<+\infty$ then some segment of $\sS(t)$ vanishes as $t\nearrow \maximaltime.$
\end{theorem}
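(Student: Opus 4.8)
The plan is to repeat the proof of Theorem \ref{teo:short_time} almost verbatim, the only new ingredient being the elementary observation that a segment or half-line with zero $\phi$-curvature does not translate. Write $\initialnetwork=\cup_{i=1}^nS_i^0$ and, exactly as before, partition it into connected graphs $G_1,\dots,G_m$ by removing all simple vertices, so that each $G_j$ is a union of segments/half-lines meeting only at junctions of degree $\ge 3$. The first task is to prove the appropriate analogue of Lemma \ref{lem:graph_triple}: running the reduced quadratic minimization \eqref{function_psi_qara} — now with the segments/half-lines incident to $m\ge 4$-junctions contributing only constant, energy-free terms (their $\phi$-curvature being zero by hypothesis) and hence acting as frozen boundary data — Lemma \ref{lem:quadratic_minimization} applies to the remaining triple-junction variables and yields, blockwise, the usual dichotomy: a block of triple junctions is either entirely ``ic'' or entirely ``bc'' with all its segments/half-lines of zero $\phi$-curvature. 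This dichotomy is stable under parallelness as in Lemma \ref{lem:preserving_parallelness}, and every half-line, every segment at a bc-triple junction and every segment at an $m\ge 4$-junction carries zero $\phi$-curvature, hence does not move.

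Next I would extend Theorem \ref{teo:reconstruction} to this setting. Given heights $\{h_i\}$ with $h_i=0$ whenever $S_i$ is a half-line, a segment at a bc-triple junction or a segment at an $m\ge 4$-junction, with the linear compatibility $\sum_j(-1)^{\exponent_j}h_j=0$ at each ic-triple junction, and with $|h_i|\le\min\{\Delta_1,\Delta_2\}$, one reconstructs a unique parallel network in the corresponding class $\Xi(\initialnetwork)$: near an $m\ge 4$-junction there is nothing to do, since all incident heights vanish and the junction is literally unchanged, while away from such junctions the segment-by-segment and polygonal-curve constructions of the proof of Theorem \ref{teo:reconstruction}, together with Lemmas \ref{lem:distan_vertex}, \ref{lem:injectivity} and \ref{lem:heights_compatibility}, apply unchanged. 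In the same way, the dependence of the $\phi$-curvature on the heights, $\kappa_{\bar S_i}^\phi=\kappa_{S_i}^\phi+u_i(h_1,\dots,h_n)$ with $u_i$ real-analytic near the origin and satisfying \eqref{nice_K_k} and the balance \eqref{curvature_valvalva}, is obtained exactly as before: $u_i\equiv 0$ on the static blocks, and on the ``ic'' blocks the $u_i$ come from the analytic solution of the nondegenerate linear system $\nabla\psi=0$, the $m$-junction segments again entering only as fixed data.

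With these two ingredients in hand, the contraction argument of Step 1 of Section \ref{sec:proof_of_theorem} carries over word for word. On the Banach space $\cB_T$ of height-tuples satisfying the linear conditions (1)–(3) — with ``bc-triple junction'' in condition (2) now meaning ``bc-triple junction or $m\ge 4$-junction'' — the map $\Phi_i[h](t)=-\int_0^t\big(\kappafi_{S_i^0}+u_i(h(s))\big)\,ds$ maps the closed ball $\cK$ of radius $\Delta_0:=\min\{\Delta_1,\Delta_2,\Delta_3\}$ into itself and is a contraction for $T=T_0$ small, its unique fixed point providing, via the reconstruction, a family $\{\sS(t)\}_{t\in[0,T_0]}$ parallel to $\initialnetwork$ with $h_i\in C^\infty[0,T_0]$, $h_i(0)=0$ and $h_i'=-\kappafi_{S_i(t)}$, i.e.\ the unique $\phi$-curvature flow on $[0,T_0]$. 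The maximal-time argument of Step 2 then produces $\maximaltime\in(0,+\infty]$ and the alternative: either $\maximaltime=+\infty$, or $\liminf_{t\nearrow\maximaltime}\cH^1(S_i(t))=0$ for some $i$, since otherwise the Kuratowski limit $\sS(\maximaltime)$ would again be a simple network with multiple junctions from which the flow could be restarted, contradicting maximality.

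The main obstacle I expect is precisely the generalized Lemma \ref{lem:graph_triple} and the attendant bookkeeping: one must check carefully how an $m\ge 4$-junction sitting inside a connected graph interacts with the triple junctions of that graph — in particular that the zero-$\phi$-curvature hypothesis at the $m$-junction is compatible with a neighboring triple junction being an ic-triple junction, so that the reduced minimization still splits into well-behaved blocks and the ``ic/bc'' dichotomy together with its stability under parallelness survives. Once this combinatorial point is settled, the rest is a routine transcription of the arguments already established for simple networks.
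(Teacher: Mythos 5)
Your proposal is correct and follows exactly the route the paper intends: the authors in fact omit the detailed proof, stating only that it ``runs along the lines of Theorem \ref{teo:short_time}, since the $m\ge4$-junctions do not move,'' which is precisely the observation you build on (zero $\phi$-curvature at the $m\ge 4$-junctions freezes those segments, so they enter the quadratic minimization, the reconstruction, and the fixed-point argument only as static data). Your explicit attention to the generalized Lemma \ref{lem:graph_triple} and the persistence of the zero-curvature condition under parallelness is a reasonable filling-in of details the paper leaves implicit.
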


As mentioned earlier, the proof of this theorem runs along the lines of Theorem \ref{teo:short_time}, since the $m\ge4$-junctions do not move, therefore, we omit it.
This theorem in some cases allows to restart the flow even after some segments at the maximal time vanish.

\begin{corollary}[\textbf{Restarting the flow}]\label{cor:restart_of_flow}
Let $\initialnetwork$ be a simple network with multiple junctions and $\{\sS(t)\}_{t\in[0,\maximaltime)}$ be the unique $\phi$-curvature flow starting from $\initialnetwork.$ Assume that the Kuratowski-limit
$$
\sS(\maximaltime):=\lim\limits_{t\nearrow \maximaltime}\,\sS(t)
$$
is well-defined and $\sS(\maximaltime)$ is simple with multiple junctions. Then there exist $T^\ddag \in (0, +\infty]$ and a unique $\phi$-curvature flow $\{\sS(t)\}_{t\in[\maximaltime, T^\ddag)}$ starting from $\sS(\maximaltime).$
\end{corollary}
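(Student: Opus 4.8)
The plan is to deduce the statement directly from Theorem \ref{teo:chalpak_flow}, using that the evolution law \eqref{mcf_heights} is \emph{autonomous}: the $\phi$-curvature $\kappafi_{S_i(t)}$ depends only on the geometry of $\sS(t)$ and not explicitly on $t$. Consequently a $\phi$-curvature flow can always be restarted by a shift of the time origin, and ``restarting after $\maximaltime$'' amounts to running the local theory from the new initial datum $\sS(\maximaltime)$.

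First I would note that, by hypothesis, $\sS(\maximaltime)$ is a simple network with multiple junctions; in particular it is an admissible polygonal network whose junctions are either $120^\circ$-triple junctions or $m\ge 4$-junctions all of whose concurring segments/half-lines have zero $\phi$-curvature, so it satisfies precisely the hypotheses of Theorem \ref{teo:chalpak_flow}. Applying that theorem with initial network $\sS(\maximaltime)$ yields a maximal $\widetilde{T}^{\dag}\in(0,+\infty]$ and a unique family $\{\mathcal R(s)\}_{s\in[0,\widetilde{T}^{\dag})}$ of parallel networks which is the $\phi$-curvature flow starting from $\sS(\maximaltime)$, with the property that if $\widetilde{T}^{\dag}<+\infty$ then some segment of $\mathcal R(s)$ vanishes as $s\nearrow\widetilde{T}^{\dag}$.

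Next I would set $T^\ddag:=\maximaltime+\widetilde{T}^{\dag}$ and $\sS(t):=\mathcal R(t-\maximaltime)$ for $t\in[\maximaltime,T^\ddag)$, and verify that $\{\sS(t)\}_{t\in[\maximaltime,T^\ddag)}$ is a $\phi$-curvature flow starting from $\sS(\maximaltime)$ in the sense of Definition \ref{def:phi_curvature_flow}. Parallelness of each $\sS(t)$ to $\sS(\maximaltime)$ and the $C^1$-regularity of the corresponding heights are inherited from $\mathcal R(\cdot)$; writing the heights of $\sS(t)$ relative to $\sS(\maximaltime)$ as $h_i(t)$ and those of $\mathcal R(s)$ as $g_i(s)$, one has $h_i(t)=g_i(t-\maximaltime)$, so by the chain rule and \eqref{mcf_heights} for $\mathcal R(\cdot)$ we get $\frac{d}{dt}h_i(t)=g_i'(t-\maximaltime)=-\phi^o(\nu_{S_i(t)})\kappafi_{S_i(t)}$ for $t\in(\maximaltime,T^\ddag)$, which is exactly \eqref{mcf_heights}; and $\sS(\maximaltime)=\mathcal R(0)$. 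Uniqueness of the restarted flow, and the vanishing-of-a-segment alternative at $T^\ddag$ when $T^\ddag<+\infty$, follow from the corresponding assertions of Theorem \ref{teo:chalpak_flow} after translating the time variable back by $\maximaltime$.

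The only point carrying genuine content is the transfer of hypotheses: the surviving multiple junctions of $\sS(\maximaltime)$ — possibly of degree $\ge 4$, produced by the merging of several $120^\circ$-triple junctions whose mutual heights have collapsed — must have zero $\phi$-curvature along all concurring edges. This is precisely what the assumption that $\sS(\maximaltime)$ be simple with multiple junctions encodes, so no separate argument is needed here; it is, however, exactly the condition that has to be checked by hand in the explicit examples of Section \ref{sec:examples}. I expect this to be the only place where care is required, the rest being routine book-keeping of the time shift.
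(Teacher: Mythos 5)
Your proposal is correct and coincides with the paper's intended argument: the corollary is stated as an immediate consequence of Theorem \ref{teo:chalpak_flow}, obtained by applying that theorem to the new initial datum $\sS(\maximaltime)$ (which by hypothesis satisfies its assumptions) and translating the time variable, exactly as you do. The paper gives no further detail, so your verification of the time-shift bookkeeping and of the transfer of hypotheses is, if anything, slightly more explicit than the original.
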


Notice that, for $t\in [\maximaltime,T^\ddag)$,  the networks $\sS(t)$ are not parallel to $\initialnetwork.$

\begin{remark}
In Example \ref{exa:formation_of_two_triple_junctions} we will see that networks with quadruple junctions can produce a regular $\phi$-curvature flow, in which quadruple junctions do not move. However, they are not stable under small (simple) perturbations -- in physical situations those quadruple junctions can split into two triple junctions to become stable under small perturbations (see Figure \ref{fig:non_minimal}). We expect such a phenomenon to appear, when defining the evolution using the minimizing movement method (see \cite{ATW:1993,BKh:2018,BChKh:2020} in the case of two phases, and in the case of more phases).
\end{remark}

\section{Examples}\label{sec:examples}

\begin{example}\label{exa:formation_of_two_triple_junctions}
Consider the evolution of the simple network $\sS^0$ (with a quadruple junction) in Figure \ref{fig:non_minimal}, where we assume that segments at the quadruple junction have length $a>0.$ 
By criticality, $\sS^0$ does not move, i.e., its  unique $\phi$-curvature evolution (in the sense of Theorem \ref{teo:chalpak_flow}) is constant $\sS(t)\equiv \sS^0$. Now, let us parse the quadruple junction into two triple junctions at distance $2x>0$ (dotted) and denote the obtained network by $\mathcal N^0= \mathcal N^0(x).$ By symmetry and uniqueness of the geometric flow, the horizontal segment does not translate up or down (it has vanishing $\phi$-curvature) and non-horizontal segments have constant curvature $\pm\frac{1}{\sqrt3a}$ (independent of $x$).  Therefore, these segments move linearly to infinity in the direction  of the corresponding half-lines. In particular, in both cases the flow $\{\mathcal N(t)\}_{t \in [0,T^\dag)}$  (given by 	
Theorem \ref{teo:short_time}) uniquely exists with $T^\dag = +\infty$ and in addition $\mathcal N(t)$ is simple for any $t\geq 0$.
\end{example}

In Example \ref{exa:formation_of_two_triple_junctions} the maximal existence time $\maximaltime$ of the flow if infinite, as opposite to the following example.

\begin{wrapfigure}[8]{l}{0.5\textwidth}
\includegraphics[width=0.48\textwidth]{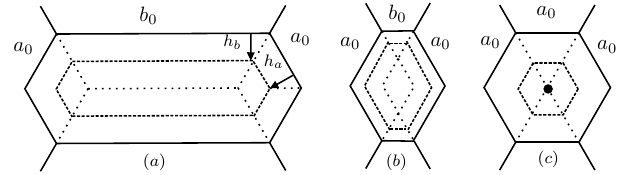}
\caption{\small}
\label{fig:four_junctions09}
\end{wrapfigure}
$\,$ \vspace*{-6mm}
\begin{example}\label{exa:high_multiple_chuvaks}
Let us consider the simple networks 
$\initialnetwork$ in Figure \ref{fig:four_junctions09}, consisting of a hexagon, symmetric with respect to the horizontal axis and clockwise oriented, and of four half-lines starting at the endpoints of the horizontal segments. Denoting by $b(t)$ and $a(t)$ the length of horizontal and lateral segments of $\sS(t)$, we can compute the heights from $\sS^0$ (whose sidelengths are $b_0$ and $a_0$) as
$$
h_a = -\tfrac{\sqrt3}{2}\,\tfrac{a_0+b_0-a-b}{2},\qquad h_b =- \tfrac{\sqrt3}{2}\,(a_0-a),
$$
and the $\phi$-curvature of all segments is equal to $\frac{2}{\sqrt3(b+2a)}.$ Thus, the $\phi$-curvature equation \eqref{mcf_heights} is equivalent to the system
$$
\begin{cases}
\tfrac{\sqrt3}{4} \,(a'+b') = -\tfrac{2}{\sqrt3(b+2a)},\\[2mm]
\tfrac{\sqrt3}{2} \,a' = -\tfrac{2}{\sqrt3(b+2a)},
\end{cases}
$$
which implies $b-a = b_0-a_0.$

\begin{itemize}
\item Consider Figure \ref{fig:four_junctions09} (a), where $b_0>a_0.$ In this case $b(t)>a(t)$ for all times $t\in [0,\maximaltime)$, and hence,  $a(t)\to 0^+$ as $t\nearrow \maximaltime.$ At the maximal time the network  $\sS(\maximaltime)$ is the union of a horizontal segment of length $b_0-a_0$ and four half-lines starting from the endpoints of this segment. Clearly, $\sS(\maximaltime)$ is critical and admissible.

\item In Figure (b), $b_0<a_0,$ and hence, $b(t)<a(t)$ for all $t\in[0,\maximaltime)$. Then $b(t)\to 0^+$ as $t\nearrow \maximaltime$ and at the maximal time the network  $\sS(\maximaltime)$ is the union of a rhombus and four half-lines starting from the vertical vertices of the rhombus. 
Clearly, $\sS(\maximaltime)$ is critical and admissible.

\item In Figure (c), $b_0=a_0,$ and hence, $b(t)=a(t)$ for all $t\in[0,\maximaltime)$. Thus, $a(t)\to 0^+$ as $t\nearrow \maximaltime$ and at the maximal time the hexagon shrinks to a point, and the limiting network $\sS(\maximaltime)$ is a minimal conical admissible network forming an ``X''-type quadruple junction. Later in Section \ref{sec:homo_shrinkos} we will show that this evolution is indeed a self-shrinker.
\end{itemize}
\end{example}

\begin{wrapfigure}[9]{l}{0.35\textwidth}
\vspace*{-6mm}
\includegraphics[width=0.33\textwidth]{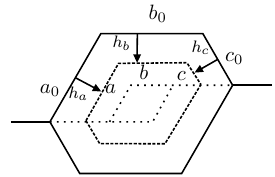}
\caption{\small}
\label{fig:bad_finish}
\end{wrapfigure}
In Figure \ref{fig:four_junctions09} (a) at the maximal time two horizontal segments of the hexagon collapse to the same segment, which has therefore {\it multiplicity two}. 

\begin{example}\label{ex:hexagon09o1}
Consider the network $\sS_0$ in Figure  \ref{fig:bad_finish}, and assume that the opposite sides of the initial hexagon are equal. Since $\sS_0$ has a mirror symmetry, by Remark \ref{rem:nice_symmetry} so does its unique $\phi$-curvature flow $\sS(t).$ In the notation of Figure  \ref{fig:bad_finish} the $\phi$-curvature equation is 
\begin{equation}\label{shdz368fnnm}
\begin{cases}
h_a' = h_c'=\frac{2}{\sqrt3(a+c)},\\
h_b' = \frac{2}{\sqrt3 b}.
\end{cases}
\end{equation}
Moreover, it is possible to check that $a_0-c_0=a-c,$ $h_a=h_c = \frac{b_0+c_0-b-c}{2},$ and therefore from \eqref{shdz368fnnm} we get 
$$
(2a-a_0+c_0)(a'+b') = 2a'b.
$$
Thus, representing $b=F(a)$ we can write this equality as 
$$
F'(a) = \tfrac{2}{2a-a_0+c_0}\,F(a) - 1,
$$
which has the unique solution 
\begin{equation}\label{klientirzs}
b=F(a) = (2a-a_0+c_0) \Big(\tfrac{b_0}{a_0+c_0} + \ln\tfrac{a_0+c_0}{2a-a_0+c_0}\Big).
\end{equation}

(a) First consider the case $a_0>c_0.$ Then $a_0-c_0=a-c,$ $a\ge a_0-c_0>0$ and hence $a(\maximaltime)>0$ and  $b(\maximaltime)>0.$ Therefore, $c(\maximaltime)=0$ and $\sS(\maximaltime)$ is a union of two half-lines and a parallelogram, which is noncritical.

(b) In case $a_0=c_0,$ $\initialnetwork$ has a horizontal symmetry and \eqref{klientirzs} is represented as 
$$
b=2a\Big(\tfrac{b_0}{2a_0} + \ln a_0\Big) - 2a\ln a,
$$
and hence $a(\maximaltime)=b(\maximaltime)=0,$ i.e., $\sS(t)$ converges to the straight line (the hexagon disappears).
\end{example}

Next we analyse some examples of simple  networks with multiple junctions.

\begin{example}\label{exa:two_cases}
Consider the three situations  in Figure \ref{fig:hexagon_with_quad}.

\begin{itemize}
\item  In case (a), at time $\maximaltime>0$ the two lateral segments adjacent to $S_1(t)$ disappear first and thus, $\sS(\maximaltime)$ looses $120^o$-condition  at both quadruple junctions. Similar situation happens in case (c),  where only one of the lateral segments (provided that they are short enough) disappears at time $\maximaltime >0$, destroying the $120^o$-condition on the right quadruple junction. In both cases $\sS(\maximaltime)$ is not simple anymore.
 
\begin{figure}[htp!]
\vspace*{-3mm}
\includegraphics[width=0.9\textwidth]{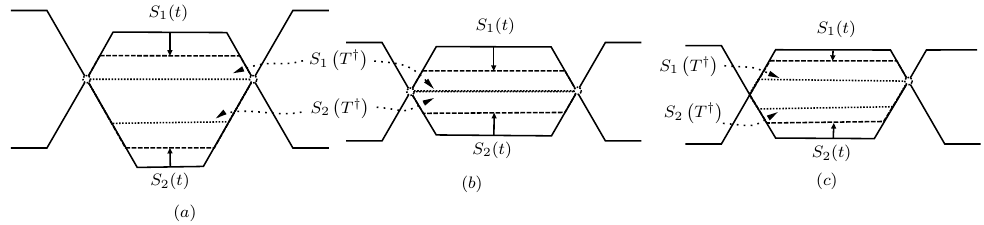}
\caption{\small}
\label{fig:hexagon_with_quad}
\end{figure}

\item In case (b), $\initialnetwork$ is axially symmetric with respect to the  horizontal and vertical axes. Since the flow preserves those symmetries, as $t \nearrow \maximaltime$ both segments $S_1(t)$ and $S_2(t)$ converge to the horizontal segment connecting the two quadruple junctions. Thus, $\sS(\maximaltime)$ becomes a network having all axial symmetries of $\initialnetwork,$ but having two triple junctions connected by a segment. 
\end{itemize}
\end{example}

\begin{wrapfigure}[13]{l}{0.3\textwidth}
\vspace*{-3mm}
\includegraphics[height=5cm]{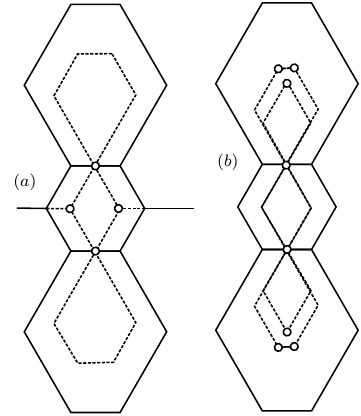}
\caption{\small}
\label{fig:three_hexaooons}
\end{wrapfigure}

In Example \ref{exa:two_cases} 
we observed the transition  of quadruple junctions   into triple junctions. Now we analyze the converse situation.

\begin{example}
Consider the networks in Figure  \ref{fig:three_hexaooons}, differing each other only by the two horizontal half-lines starting from the vertices of the mid-hexagon in the left figure. Both networks are simple  and axially symmetric. Therefore $\sS(\maximaltime)$ becomes axially symmetric, but the two triple junctions above and two triple junctions below join forming
 quadruple junctions (two horizontal segments of the mid hexagon disappear at $\maximaltime$). However, in (a) the quadruple junctions are linked to the triple junctions by a segment which makes nonzero the $\phi$-curvature of those segments, and thus, invalidating the simpleness.  Whereas in (b) $\sS(\maximaltime)$ becomes simple with two quadruple junctions, but not parallel to the initial one. Thus, we may continue the flow after $\maximaltime$, until some other maximal time $T^{\ddag}>\maximaltime,$ at which two horizontal segments of pentagons above and below disappear, and hence, $\sS(T^\ddag)$ ceases to be simple.
\end{example}

\section{Homothetically shrinking solutions with one bounded component}\label{sec:homo_shrinkos}

All homotheties we consider have center at the origin of the coordinates.

\begin{definition}[\textbf{Self-shrinker}]
A family $\{\sS(t)\}_{t\in[0,T)}$ of admissible polygonal networks evolving by $\phi$-curvature is called a \emph{homothetically shrinking solution} starting from $\initialnetwork$, if there exists a strictly decreasing function $r:[0,T)\to[0,1]$ such that 
$$
\lim\limits_{t\to T^-} r(t) = 0
\quad\text{and}\quad
\sS(t) = r(t)\initialnetwork\,\,\text{for all $t\in[0,T).$}
$$
For shortness, we call $\initialnetwork$ a \emph{self-shrinker}.
\end{definition}

In this section we classify the homothetically shrinking solutions starting from a polygonal admissible network $\initialnetwork,$ partitioning $\R^2$ into phases with only one bounded component consisting of a convex hexagon $\bA$ (see Figure \ref{fig:all_shrinkers}); when necessary, the hexagon is parametrized counterclockwise. By definition, any half-line of a self-shrinker must lie on a straight line passing through the origin and, being $\bA$ convex, the homothety center cannot be in its exterior. Furthermore, if the origin is located in the (relative) interior of some segment $S=[XY]$ of $\bA,$ then by the self-similarity, that segment does not move (it has zero $\phi$-curvature). In this case, by the convexity of $\bA$ at least one endpoint of $S,$ say $X,$ should be the starting point of a half-line, lying on the same line with $S.$ Then one can readily check that another segment of $\bA$ sharing common point $X$ with $S$ should also have zero $\phi$-curvature, i.e., it does not move, which falsifies the self-similarity of $\initialnetwork.$ Therefore, we only have to deal with two situations: the origin is either in the interior of $\bA$ (see Figure \ref{fig:all_shrinkers}) or it is one of the vertices (see Figure \ref{fig:masheynik09}).

Recall that without half-lines a network $\initialnetwork=\p\bA$ is a self-shrinker if and only if $\bA=\Wulff_{R_0}$ for some $R_0>0.$ Since the (signed) height is $\frac{\sqrt3}{3}(R_0 - R(t))$ and the $\phi$-curvature of any segment of $\bA$ is $\frac{2}{\sqrt3 R(t)},$ where $\frac{2}{\sqrt3}$ is the sidelength of the 
unit Wulff shape,  the radius $R(t)$ of $\sS(t)=\p \Wulff_{R(t)}$ is given by 
$$
R(t):=\sqrt{R_0^2 - \tfrac{8}{3}t},\quad t\in[0,\tfrac{3R_0^2}{8}),
$$
so that the function $r(t):=\sqrt{1-\frac{8t}{3R_0^2}}$ satisfies $\sS(t)=r(t)\initialnetwork.$

The main result of this section is the following classification theorem (see Figure \ref{fig:self_shrinko}).

\begin{theorem}[\textbf{Classification of shrinkers}]
Let a network $\initialnetwork$ be a union of an admissible convex hexagon $\bA$ and $n\ge1$-half-lines.

\begin{itemize}
\item[\rm (a)] Suppose the origin is an interior point of $\bA.$ Then $n\le 5$ and up to a rotation and a mirror reflection:
\begin{itemize}
\item if $n=1$ (Brakke-type spoon), then the sidelengths of $\bA$ are  $a, 2a, a, a,2a, a$ for some $a>0,$ and the unique half-line starts from a vertex of $\bA$ at which both adjacent segments 
have length $a.$ In this case the origin divides the largest diagonal of $\bA$ in proportion $1:2$ starting from the half-line, 
and
\begin{equation}\label{eq:S_t_r_t}
\sS(t)=r(t)\initialnetwork \qquad \text{with}\quad r(t)=\sqrt{1-\tfrac{2t}{3a_o^2}},\quad t\in [0,\tfrac{3a_o^2}{2});
\end{equation}

\item if $n=2,$ then the sidelengths of $\bA$ are approximately $a,2.94771a,$ $2.33925a,$ $1.60847a,$ $2.33924a,$ $2.94772a$ for some $a>0,$ and the two half-lines start from the endpoints of the segment of length $a.$ Moreover \eqref{eq:S_t_r_t} holds;

\item if $n=3,$ then $\bA$ is a regular hexagon and the three half-lines form a $120^o$-angle. Moreover \eqref{eq:S_t_r_t} holds;

\item if $n=4,$ then $\bA$ is a regular hexagon and the (prolongations of) four half-lines form an $X$. Moreover
\begin{equation}\label{eq:S_4_9t_r_t}
\sS(t)=r(t)\initialnetwork
\qquad \text{with}\quad  
r(t)=\sqrt{1-\tfrac{4t}{9a_o^2}},\quad t\in [0,\tfrac{9a_o^2}{4});
\end{equation}
\item if $n=5,$ then $\bA$ is a regular hexagon. Moreover \eqref{eq:S_4_9t_r_t} holds.
\end{itemize}

\item[\rm(b)]  Suppose the origin is a vertex (say $A_1$) of $\bA.$ Then $\bA$ is homothetic to a Wulff shape, at $A_1$ there are at least one half-line parallel to adjacent segments at $A_1$, and there is another half-line at 
vertex $A_4$ opposite to $A_1$ (see Figures \ref{fig:masheynik09} (a) and (c)).
\end{itemize}

\end{theorem}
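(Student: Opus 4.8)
The plan is to treat the two cases separately, in each case reducing to the system of ODEs \eqref{mcf_heights} and using the self-similarity constraint $\sS(t)=r(t)\initialnetwork$, which forces every segment $S_i$ to satisfy $\kappafi_{S_i}=\lambda\,\dist(S_i,O\text{-line})$ for a common constant $\lambda>0$ (the homothety rate); equivalently, the signed height of each segment from the origin, say $d_i$, must satisfy $\kappafi_{S_i}=\lambda d_i$. Concretely, if $\bA$ has sidelengths $\ell_1,\dots,\ell_6$ (consecutive), then the segment $S_i$ has $\phi$-curvature $\kappafi_{S_i}=\tfrac{2}{\sqrt3\,\mathrm{Per}(\bA)}$ or, more precisely, a value computed from the minimal CH field as in Section \ref{subsec:comput_curvatures}, while the self-shrinking condition pins down the distances $d_i$. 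Once we know the directions of the half-lines and which vertices they emanate from, the self-shrinker condition becomes a finite system of algebraic equations in $(\ell_1,\dots,\ell_6)$ together with the constraint that the origin be a fixed point of the homothety; solving it gives the listed sidelength ratios and the stated $r(t)$.

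First I would handle case (b), which is the shorter one. If the origin is a vertex $A_1$ of $\bA$, then by convexity the two segments of $\bA$ meeting at $A_1$ point ``away'' from $O$ in a $120^\circ$ cone, and for $\initialnetwork$ to be $\phi$-regular with a balance condition at $A_1$ there must be at least one half-line at $A_1$; by Lemma \ref{lem:conical_critical_networks} and the balance condition \eqref{eq:balance_condition}, the only admissible configuration at $A_1$ is that the half-line(s) lie along the rays parallel to the two $\bA$-edges at $A_1$ (so the local picture at $A_1$ is one of the critical cones of Figure \ref{fig:quad_junc}). Self-similarity then forces every segment of $\bA$ to move homothetically toward $O$; writing the height ODE \eqref{mcf_heights} for each of the (at most five) moving segments and imposing that all of them scale by the same $r(t)$ forces the CH field to be the constant field of the boundary of a Wulff shape, i.e. $\bA=\Wulff_{R_0}$ up to translation with $A_1\in\p\Wulff_{R_0}$; the remaining half-lines must then sit at the vertex $A_4$ antipodal to $A_1$ (again by the balance condition, since that is the only other vertex where rays parallel to the incident edges are available). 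This yields Figures \ref{fig:masheynik09} (a), (c) and completes (b).

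For case (a) I would first bound $n$: each half-line must exit a vertex of $\bA$ along a ray through the origin (by self-similarity the half-line is fixed, so it lies on a line through $O$), and at that vertex the balance condition \eqref{eq:balance_condition} together with admissibility restricts the possible local configurations to those of Figure \ref{fig:quad_junc}; counting the available vertices and directions of a hexagon circumscribing $O$ gives $n\le 5$. Then for each $n\in\{1,2,3,4,5\}$ I would set up the algebraic system: parametrize $\bA$ by its six sidelengths, express the position of $O$ (forced to be the intersection point of the lines carrying the half-lines, when $n\ge 2$, or an appropriately placed interior point when $n=1$), and impose $\kappafi_{S_i}=\lambda d_i$ for all $i$ with the same $\lambda$. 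For $n=3,4,5$ the half-line directions are highly symmetric and one checks directly that the system forces $\bA$ to be regular (all sidelengths equal), reading off $r(t)=\sqrt{1-\tfrac{2t}{3a_o^2}}$ in the $n=3,5$ cases and $r(t)=\sqrt{1-\tfrac{4t}{9a_o^2}}$ in the $X$-junction case $n=4$ from the curvature value $\kappafi=\tfrac{2}{\sqrt3\,\ell(\initialnetwork\cap \text{one period})}$. For $n=1$ and $n=2$ the hexagon is genuinely non-regular; here the symmetry is only a single mirror axis (through the half-line(s)), so I would exploit Remark \ref{rem:nice_symmetry} to reduce the number of unknowns, write the two or three independent height equations, and solve. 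The $n=1$ ratios $a,2a,a,a,2a,a$ come out of Remark \ref{rem:ajoyib_tenglik} (b)–(c) plus the fixed-point condition on $O$; the $n=2$ case reduces to a single transcendental equation whose numerical solution gives the quoted approximate ratios $1,2.94771,2.33925,1.60847,2.33924,2.94772$.

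The main obstacle I expect is the $n=1$ and $n=2$ analysis: unlike the symmetric cases, here one cannot shortcut the computation of the minimal CH field, so one must carefully run the minimization of Section \ref{subsec:comput_curvatures} (the function \eqref{function_psi_qara}) on a hexagon-with-tail whose triple junctions may be ic- or bc-type, determine which segments actually have nonzero $\phi$-curvature, and then verify that the resulting height system admits exactly one solution with all sidelengths positive and with $O$ in the correct position — in particular, ruling out spurious configurations where the half-line attaches to the ``wrong'' vertex. A secondary subtlety is the exhaustiveness of the case analysis for the location of $O$ and the attachment points of the half-lines; this I would organize as a finite check over the vertex/direction combinations compatible with admissibility and the balance condition, discarding each non-self-similar candidate by showing some segment's $\phi$-curvature is incompatible with $\kappafi_{S_i}=\lambda d_i$.
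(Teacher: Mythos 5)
Your overall strategy is the paper's: reduce the self-shrinking condition to the height ODE \eqref{mcf_heights}, observe that homothety forces all segments to move at speeds proportional to their distance from the origin, enumerate the finitely many ways half-lines can attach to the hexagon (up to rotation and reflection), and solve the resulting algebraic system in each case. The paper implements this by parametrizing with the angles $\theta_i,\bar\theta_i$ seen from the origin, reducing everything to the chain of equalities \eqref{bizning_kamonlar} in the variables $\F_i,\bar\F_i$, and solving those polynomial systems configuration by configuration (Figure \ref{fig:all_shrinkers} (a)--(l)). So the plan is sound in outline, but it has three concrete problems.

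First, your derivation of $n\le5$ is wrong. Counting one half-line per vertex of a convex admissible hexagon containing $O$ in its interior gives $n\le6$, not $n\le5$; the balance condition and Figure \ref{fig:quad_junc} do not forbid the six-half-line configuration. The paper excludes $n=6$ for a different reason: when all six angle bisectors concur, $\bA$ is a Wulff shape and the resulting network admits a locally constant CH field, hence is critical and stays still --- it is not a \emph{shrinking} solution. Without this observation your bound does not close.

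Second, your stated rate for $n=5$ is incorrect: you group it with $n=3$ and claim $r(t)=\sqrt{1-\tfrac{2t}{3a_o^2}}$, whereas the correct rate is \eqref{eq:S_4_9t_r_t}. The point is that the $\phi$-curvature is nonlocal: for the regular hexagon with five half-lines, the six sides form a chain linked by five triple junctions, so by \eqref{five_triple} each side has curvature of magnitude $\tfrac{1}{3\sqrt3\,a}$ rather than $\tfrac{1}{\sqrt3\,a}$, which yields $a(t)=\sqrt{a_o^2-\tfrac49 t}$. This error (and your provisional formula $\kappafi_{S_i}=\tfrac{2}{\sqrt3\,\Per(\bA)}$, which is only correct in the fully linked case) suggests the dependence of the constants $c_i$ in \eqref{no_triple}--\eqref{five_triple} on the junction structure needs to be handled more carefully; it is exactly this dependence that kills most of the twelve candidate configurations.

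Third, the computational core is missing. The $n=1$ sidelength ratios $a,2a,a,a,2a,a$ do not ``come out of Remark \ref{rem:ajoyib_tenglik} (b)--(c)''; in the paper they are obtained by solving the coupled fourth-order polynomial system \eqref{first_equa}--\eqref{second_equa} in $\F_2,\F_3$ and showing $\F_2=1$, $\F_3=\tfrac12$ is the unique admissible root, and the $n=2$ case similarly reduces to a quartic with a unique positive root $\F_3\approx0.33925$. You correctly identify these cases as the main obstacle, but the plan as written substitutes a reference that cannot do that work. The same applies to ruling out the non-shrinking configurations (e.g.\ Figure \ref{fig:all_shrinkers} (c),(d),(e),(f),(h),(i)): each requires exhibiting an explicit contradiction in the system \eqref{bizning_kamonlar}, not just the generic remark that ``some segment's curvature is incompatible.''
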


\begin{wrapfigure}[18]{l}{0.44\textwidth}
\includegraphics[width=0.42\textwidth]{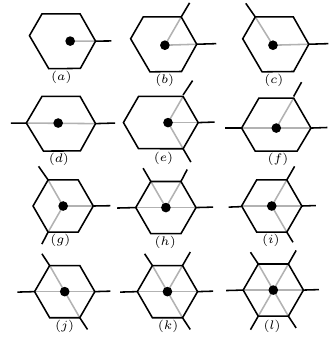} 
\caption{\small}
\label{fig:all_shrinkers}
\end{wrapfigure}
\subsection{Homothety center inside $\bA$}
\label{subsec:homothety_center_inside_the_hexagon}

In this section we assume that the homothety center -- the origin -- is an interior point of $\bA$ so that all straight lines containing the half-lines pass through the origin and bisect the corresponding angles of $\bA.$ Accordingly, we write $\initialnetwork=\cup_{i=1}^n S_i,$ where $S_1,\ldots,S_6$ are the sides of $\bA$ (counterclockwise order) with lengths $a_1,\dots, a_6$ respectively, and $S_7,\ldots,S_n$ are half-lines, $n \geq 7$ (see Figure \ref{fig:tarelka}). Note that $n\le 12.$

Since the origin is 
inside $\bA$, there might be at most one half-line emanating from each vertex. In particular, there are  at least one and at most six such half-lines. Up to a rotation and mirror reflection, there are twelve possible variants (see Figure \ref{fig:all_shrinkers}); in what follows we characterize which ones are a self-shrinker. 

We use the notation of Figure \ref{fig:tarelka} and, as in Figure \ref{fig:all_shrinkers}, we always assume that $A_1$ is a triple junction of segments $S_1,S_6$ and the half-line $S_7$, and $\p \bA$ is oriented counterclockwise, so that the heights $h_i$ and $\bar h_i$ are nonnegative. Clearly, $\theta_4 + \bar\theta_4=120^o,$ and $\theta_1=\bar\theta_1 = 60^o$ so that $h_1=\bar h_1.$ Let $a_1:=a >0$ and $h_1=\bar h_1=:h\ge0.$ 

\begin{figure}[htp!]
\includegraphics[width=0.48\textwidth]{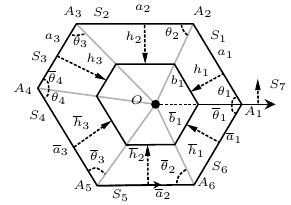}
\caption{\small}
\label{fig:tarelka}
\end{figure}

We proceed as follows. We express the sidelengths $a_i$ and $\bar a_i$ of $\bA$, for $i=2,\dots, 6$, by means of $a$ and the angles $\theta_i$, $\bar \theta_i$ (see \eqref{express_ai_with_a}). Similarly, we represent the remaining heights by means of $h$ 
and $\theta_i,\bar\theta_i$ (see \eqref{represen_hi_with_h}). The driving reason here is that, by the homothety, $\theta_i$ and $\bar\theta_i$ are independent of time, and therefore the equation $h_i' = -{\kappafi_{S_i}} = \frac{c_i}{\cH^1(S_i)}$ (see \eqref{eq:mcf_heights_simplified}) can be rewritten only using $a,h$ and those angles; here $c_i$ are some numbers depending only on the structure of the network (see \eqref{no_triple}-\eqref{five_triple} below), which somehow account for how many triple junctions ``linked'' to each other are present. These curvature equations for segments  $S_1,\ldots,S_6$ imply a system with respect to $\theta_i$ and $\bar \theta_i$, see \eqref{bizning_kamonlar}. Since $c_i$ changes as the number of half-lines in the network increases, we need to analyze each network in Figure \ref{fig:all_shrinkers} separately.

\subsection*{Preliminary computations}
Now we enter into the details. Let us introduce the notation 
$$
\F_i:=\frac{\sin(60^o+\theta_i)}{\sin\theta_i},\quad
\bar\F_i:=\frac{\sin(60^o+\bar \theta_i)}{\sin \bar\theta_i},\quad 
i=1,2,3,4.
$$
Clearly $\F_i>0$ and $\bar\F_i>0$, since $\theta_i\in (0,120^o)$. We express $a_i,$ $\bar a_i,$ $h_i$ and $\bar h_i$ by means of $a,$ $h,$ $\F_i$ and $\bar\F_i.$ We have:
\begin{equation}\label{represen_hi_with_h}
\begin{cases}
h_1= h,\\
h_2=\frac{\sin\theta_2}{\sin(60^o+\theta_2)}h = \frac{h}{\F_2},\\
h_3=\frac{\sin\theta_3}{\sin(60^o+\theta_3)}\frac{\sin\theta_2}{\sin(60^o+\theta_2)}h = \frac{h}{\F_2 \F_3},
\end{cases}
\quad 
\begin{cases}
\bar h_1= h,\\
\bar h_2=\frac{\sin \bar\theta_2}{\sin(60^o+\bar\theta_2)}h = \frac{h}{\bar \F_2},\\
\bar h_3=\frac{\sin \bar\theta_3}{\sin(60^o+\bar\theta_3)}\frac{\sin \bar\theta_2}{\sin(60^o+\bar\theta_2)}h =\frac{h}{\bar \F_2 \bar \F_3}.
\end{cases}
\end{equation}
Since $\theta_4+\bar\theta_4=120^o$ and $h_3\sin \theta_4 = \bar h_3\sin \bar\theta_4,$ from 
\eqref{represen_hi_with_h} the angles $\theta_i$ satisfy the identities
\begin{equation*}
\frac{\sin\theta_4}{\sin(60^o+\theta_4)} = \frac{\sin(60^o+\bar\theta_4)}{\sin \bar\theta_4},\quad 
\frac{\sin(60^o + \bar\theta_3)}{\sin(60^o + \theta_3)}
\frac{\sin(60^o + \bar\theta_2)}{\sin(60^o + \theta_2)}
=
\frac{\sin\bar \theta_2}{\sin \theta_2}
\frac{\sin\bar \theta_3}{\sin \theta_3}
\frac{\sin\bar \theta_4}{\sin \theta_4}
\end{equation*}
or, in terms of $\F_i$ and $\bar \F_i,$
\begin{equation}\label{dorime_umbaloiqd}
\F_4\bar \F_4=1,\quad \bar \F_3 \bar \F_2= \F_4 \F_3 \F_2,\quad \F_3 \F_2= \bar \F_4 \bar \F_3 \bar \F_2.
\end{equation}

Next, using the law of sines  and the equalities 
$$
\frac{\sin(60^o+\theta_{i+1}-\theta_i)}{\sin\theta_{i+1} \sin\theta_i} = \frac{2}{\sqrt3}(\F_{i+1}\F_i-\F_{i+1}+1),\quad 
\frac{\sin(60^o+\bar\theta_{i+1}-\bar \theta_i)}{\sin \bar\theta_{i+1} \sin \bar \theta_i} = \frac{2}{\sqrt3}(\bar\F_{i+1}\bar \F_i-\bar \F_{i+1}+1),
$$
we represent the sidelengths of $\bA$ as follows:
\begin{equation}\label{express_ai_with_a}
\begin{aligned}
a_1 = &a,\\
a_2 = &\frac{\sqrt3 \sin(60^o + \theta_3-\theta_2)}{2\sin\theta_2\sin(60^o+\theta_3)} a = \frac{\F_3\F_2-\F_3+1}{\F_3}\,a,\\
a_3 = &\frac{\sqrt3 \sin(60^o + \theta_4-\theta_3)}{2\sin(60^o+\theta_3)\sin(60^o+\theta_4)} a = \frac{\F_3\F_4-\F_4+1}{\F_3\F_4}\,a,\\
\bar a_1 = & \frac{\sin\bar\theta_2}{\sin(60^o+\bar \theta_2)}\frac{\sin(60^o + \theta_2)}{\sin \theta_2}a = \frac{\F_2}{\bar\F_2}a,\\
\bar a_2 = & \frac{\sqrt3 \sin(60^o + \bar\theta_3-\bar\theta_2)\sin(60+\theta_2)}{2\sin\theta_2 \sin(60^o+\bar\theta_2) \sin(60^o+\bar\theta_3)} a = \frac{ \bar \F_3 \bar \F_2-\bar\F_3+1}{\F_3\F_4}\,a,\\
\bar a_3 = &\frac{\sqrt3 \sin(60^o + \bar\theta_4-\bar\theta_3)\sin \bar\theta_2 \sin(60^o+\theta_2)}{2\sin\theta_2\sin(60^o+\bar \theta_2)\sin(60^o+\bar \theta_3)\sin(60^o+\bar\theta_4)} a 
\\
= &\frac{\bar\F_3 \bar \F_4- \bar \F_4+1}{\F_3}\,a = \frac{\F_3 + \F_4- 1}{\F_3\F_4}\,a,
\end{aligned}
\end{equation}
where in the last equality we used $\F_4\bar\F_4=1.$

The $\phi$-curvature of $S_i$ is given by
\begin{equation*}
\kappafi_{S_i}
=-\frac{c_i}{\cH^1(S_i)},
\end{equation*}
where $c_i\ge0$ is computed using similar arguments of Section \ref{subsec:comput_curvatures}. Indeed, neglecting for the moment the half-lines and assuming $S_i = S_{i-6}$ for $i>6,$ one can show that $\initialnetwork$ is simple and:
\begin{itemize}
\item if both endpoints $A_i$ and $A_{i+1}$ of $S_i$ are simple vertices of $\initialnetwork$, then
\begin{equation}\label{no_triple}
c_i:=\frac{2}{\sqrt3};
\end{equation}

\item if $S_i$ and $S_{i+1}$ join at a triple junction, and both other endpoints are simple vertices, then 
\begin{equation}\label{one_triple}
c_j = \frac{2\cH^1(S_j)}{\sqrt3(\cH^1(S_i)+\cH^1(S_{i+1}))},\quad j=i,i+1;
\end{equation}

\item if $(S_i,S_{i+1})$ and $(S_{i+1},S_{i+2})$ join at two triple junctions\footnote{I.e. the pairs $(S_i,S_{i+1})$ of segments and one half-line form a triple junction.}, and the other endpoints of $S_i$ and $S_{i+2}$ are simple vertices, then
\begin{equation}\label{two_triple}
c_j = \frac{2\cH^1(S_j)}{\sqrt3(\cH^1(S_i)+\cH^1(S_{i+1}) + \cH^1(S_{i+2}))},\quad j=i,i+1,i+2;
\end{equation}

\item if $(S_i,S_{i+1}),$ $(S_{i+1},S_{i+2})$ and $(S_{i+2},S_{i+3})$ join at three triple junctions, and the other endpoints of $S_i$ and $S_{i+3}$ are simple vertices, then
\begin{equation}\label{three_triple}
c_j = \frac{2\cH^1(S_j)}{\sqrt3\sum_{l=i}^{i + 3}\cH^1(S_l)},\quad j=i,i+1,i+2,i+3;
\end{equation}

\item if $(S_i,S_{i+1}),$ $(S_{i+1},S_{i+2}),$ $(S_{i+2},S_{i+3})$ and $(S_{i+3},S_{i+4})$ join at four triple junctions, and the other endpoints of $S_i$ and $S_{i+4}$ are simple vertices, then
\begin{equation}\label{four_triple}
c_j = \frac{2\cH^1(S_j)}{\sqrt3\sum_{l=i}^{i + 4}\cH^1(S_l)},\quad j=i,i+1,i+2,i+3,i+4.
\end{equation}
Note that if there are only four triple junctions, then 
$
c_{i+5} = \frac{2}{\sqrt3};
$

\item if $(S_i,S_{i+1}),$ $(S_{i+1},S_{i+2}),$ $(S_{i+2},S_{i+3}),$ $(S_{i+3},S_{i+4})$ and $(S_{i+4},S_{i+5})$ join at five triple junctions, and the other endpoints of $S_i$ and $S_{i+5}$ are simple vertices, then
\begin{equation}\label{five_triple}
c_j = \frac{2\cH^1(S_j)}{\sqrt3 \sum_{l=i}^{i + 5}\cH^1(S_l)},\quad j=i,i+1,i+2,i+3,i+4,i+5.
\end{equation}
\end{itemize}
These computations show that in homothetic networks these numbers do not change.  In particular, with the notation of Figure \ref{fig:tarelka} (d), the $\phi$-curvature equation for segments $S_i$ and $S_{7-i}$ in the homothetically shrinking solution can be represented as 
\begin{equation*}
h_i'=-\kappafi_{S_i} = \frac{c_i}{a_i}\quad\text{resp.}\quad \bar h_i'=-\kappafi_{S_{7-i}} = \frac{c_{7-i}}{\bar a_i},\quad i=1,2,3.
\end{equation*}
By homothety, the angles $\theta_i$ and $\bar\theta_i$ are independent of time, and hence, using the equalities \eqref{represen_hi_with_h} and the representations of $a_i$ with $a$ in \eqref{express_ai_with_a} we obtain six equalities 
\begin{equation}\label{bizning_kamonlar}
ah' = \gamma_1=\gamma_2=\gamma_3=\gamma_4=\gamma_5=\gamma_6,
\end{equation} 
where
\begin{equation*}
\gamma_i:=
\begin{cases}
c_1 & \text{for $i=1,$}\\[2mm]
\frac{2c_2\sin(60^o+\theta_2)\sin(60^o+\theta_3)}{\sqrt3 \sin(60^o+\theta_3-\theta_2)} = \frac{c_2\F_2\F_3}{\F_2\F_3-\F_3 +1} & \text{for $i=2,$}\\[2mm]
\frac{2c_3\sin(60^o+\theta_2)\sin^2(60^o+\theta_3)\sin(60^o+\theta_4)}{\sqrt3 \sin\theta_2\sin\theta_3\sin(60^o+\theta_4-\theta_3)} = \frac{c_3\F_2\F_3^2\F_4}{\F_4\F_3-\F_4+1} & \text{for $i=3,$}\\[2mm]
\frac{2c_4\sin\theta_2 \sin^2(60^o+\bar\theta_2) \sin^2(60^o+\bar\theta_3) \sin(60^o+\bar\theta_4)}{\sqrt3\sin^2\bar\theta_2 \sin\bar\theta_3 \sin(60^o+\theta_2)\sin(60^o + \bar \theta_4 - \bar\theta_3)} = \frac{c_4\F_2\F_3^2\F_4}{\bar \F_4\bar\F_3-\bar\F_4+1} & \text{for $i=4,$}\\[2mm]
\frac{2c_5\sin\theta_2\sin^2(60^o+\bar\theta_2)\sin(60^o+\bar\theta_3)}{\sqrt3 \sin\bar\theta_2 \sin(60^o+ \theta_2)\sin(60^o+\bar\theta_3 - \bar\theta_2)}  = \frac{c_5\bar \F_2\F_3\F_4}{\bar\F_3\bar\F_2 - \bar\F_3 + 1} & \text{for $i=5,$}\\[2mm]
\frac{c_6\sin\theta_2\sin(60^o+\bar\theta_2)}{\sin\bar\theta_2\sin(60^o+\theta_2)} =\frac{c_6\bar \F_2}{\F_2} & \text{for $i=6.$}
\end{cases}
\end{equation*}
These equations provide a necessary and sufficient condition for $\initialnetwork$ to be a self-shrinker. 

Now, we examine the networks in Figure \ref{fig:all_shrinkers}.

\subsection*{One half-line case}
Let us study self-shrinking Brakke-type spoons $\initialnetwork$ as in Figure \ref{fig:all_shrinkers} (a), i.e., in the notation of Figure \ref{fig:tarelka}, the only half-line starts from the vertex $A_1$ of the hexagon $\bA$. 
Since $\theta_1=\bar\theta_1=60^o,$
$$
\F_1=\bar\F_1=1,\quad \F_4\bar\F_4=1,\quad \F_4\F_3\F_2=\bar\F_3 \bar\F_2,\quad  \bar\F_2\bar \F_3\F_4=\F_2\F_3.
$$
The sidelengths of $\bA$ are represented by means of $\F_i$ and $\bar\F_i$ as 
$$
a_1= a,\quad 
a_2= \frac{\F_3\F_2-\F_3+1}{\F_3}a, \quad 
a_3= \frac{\F_4\F_3-\F_4+1}{\F_3\F_4}a, \quad
\bar a_1= \frac{\F_2}{\F_2}a
$$
and
$$
\bar a_2= \frac{\bar \F_3\bar \F_2-\bar\F_3+1}{\F_3\F_4}a=
\frac{\F_4\F_3\F_2-\bar\F_3+1}{\F_3\F_4}a, \quad
\bar a_3= \frac{\bar \F_4 \bar \F_3-\bar\F_4+1}{\F_3}a=
\frac{\F_4+\bar\F_3-1}{\F_3\F_4}a.
$$
Next, recalling the definitions of $c_i$ in \eqref{no_triple} and \eqref{one_triple}, we obtain the following representations of $\gamma_i$:
$$
\gamma_1 = \gamma_6 = \frac{2}{\sqrt3}\frac{\bar \F_2}{\F_2 + \bar \F_2},\quad 
\gamma_2 = \frac{2}{\sqrt3} \frac{\F_2 \F_3}{\F_2\F_3 - \F_3 + 1}, \quad 
\gamma_3 = \frac{2}{\sqrt3} \frac{\F_2 \F_3^2 \F_4}{\F_3\F_4 - \F_4 + 1},
$$
and 
$$
\gamma_4 = \frac{2}{\sqrt3} \frac{\F_2 \F_3^2 \F_4}{\bar\F_3 \bar\F_4 - \bar\F_4 + 1},\quad 
\gamma_5 = \frac{2}{\sqrt3} \frac{\bar\F_2 \F_3\F_4}{\bar\F_2 \bar\F_3 - \bar\F_3 + 1}.
$$
First, from the equality $\gamma_1=\gamma_2$ we deduce $\bar \F_2 = \frac{\F_2^2\F_3}{1-\F_3}.$ Moreover, from $\gamma_3=\gamma_4$ we get $\bar\F_3 = \F_3\F_4^2 - \F_4^2 + 1$ and from $\gamma_2=\gamma_3$ we get $\F_4 = \frac{1}{\F_2\F_3^2 - \F_3^2 + 1}.$ Inserting the values found of $\bar \F_2$ and $\bar\F_3$ in the equality $\gamma_2 = \gamma_5$ we obtain 
$$
\frac{1}{\F_2\F_3-\F_3+1} = \frac{\F_3\F_4}{(1-\F_3)(\F_4-\F_4\F_3+\F_3\F_2)}.
$$
This equality is equivalent to $\F_4(1-2\F_3 +\F_3^2) = \F_3^2\F_2^2.$ Inserting here the earlier values found of $\F_4$ we obtain the following fourth order equation:
\begin{equation}\label{first_equa}
\F_3^4(\F_2^3-\F_2^2) + \F_3^2(\F_2^2-1) + 2\F_3-1 = 0.
\end{equation}
On the other hand, inserting the values of $\bar\F_2,\bar \F_3$ and $\F_4$ in the equation $\F_4\F_3\F_2=\bar\F_3 \bar\F_2$ we obtain another fourth order equation:
\begin{equation}\label{second_equa}
\F_3^4(\F_2^3 - 2\F_2^2 + \F_2) + \F_3^3(\F_2-1) + \F_3^2(2\F_2^2 -3\F_2+1) +\F_3(\F_2+1) -1 = 0.
\end{equation}
Subtracting \eqref{second_equa} from \eqref{first_equa} we get
\begin{equation}\label{thid_equa}
\F_3^2(\F_2-\F_2^2) + \F_3^2(\F_2-1) + \F_3(\F_2^2 - 3\F_2+2) + (\F_2-1) = 0.
\end{equation}
From this equality we deduce $\F_2 = 1.$ Inserting this in \eqref{first_equa} we find $\F_3=\frac12.$ Let us check whether there are other solutions. Factoring out \eqref{thid_equa} the term $\F_2-1$ we get 
\begin{equation}\label{fourth_equa}
-\F_3^2\F_2 + \F_3^2 + \F_3(\F_2-2) + 1 = 0.
\end{equation}
Adding this to \eqref{first_equa} we establish 
$$
\F_3^4(\F_3^3-\F_3^2) - \F_3^3\F_2 + \F_3^2\F_2^2 + \F_2\F_3 = 0.
$$
Recalling $\F_2,\F_3>0,$ the last equation is equivalent to 
\begin{equation}\label{fifth_eq}
\F_3^3(\F_2^2-\F_2) - \F_3^2 + \F_2\F_3 +1 = 0.
\end{equation}
Subtracting \eqref{fourth_equa} from \eqref{fifth_eq} gives 
\begin{equation}\label{first_square_eq}
\F_3^2\F_2^2 - 2\F_3+2 = 0. 
\end{equation}
Multiplying \eqref{fourth_equa} by $-2$ and adding to \eqref{first_square_eq} gives 
$$
2\F_3^3\F_2 +\F_3^2 (\F_2^2 - 2) - 2\F_3(\F_2-1)=0,
$$
and hence,
\begin{equation}\label{second_square_eq}
2\F_3^2\F_2 +\F_3 (\F_2^2-2)-2(\F_2-1)=0. 
\end{equation}
Now multiplying \eqref{first_square_eq}  by $\F_2-1$ and adding to  \eqref{second_square_eq} we get 
$$
\F_3^2 (\F_2^3-\F_2^2+2\F_2)+ \F_3(\F_2^2-2\F_2)=0,
 $$
and thus, 
$$
\F_3 = \frac{2-\F_2}{\F_2^2-\F_2+2}.
$$
Inserting this expression of $\F_3$ in \eqref{first_equa} and simplifying the similar terms we get 
$$
\frac{(2-\F_2)^2}{\F_2^2-\F_2+2} + 2 = 0,
$$
which does not have real solutions. 

Thus, we have a unique solution
$$
\F_1=\bar\F_1 =1,\quad \F_2=\bar\F_2 = 1,\quad \F_3=\bar\F_3=\frac{1}{2},\quad \F_4=\bar\F_4=1.
$$
Then the corresponding angles are 
$$
\theta_1=\theta_2=\theta_4=\bar\theta_1 =\bar\theta_2= \bar \theta_4= 60^o,\quad \theta_3 = \bar\theta_3 = 90^o
$$
and the sidelengths are 
$$
a_1 = a_3=\bar a_1 = \bar a_3 =  a,\quad a_2=\bar a_2=2a.
$$
Thus, $\bA$ is that hexagon, symmetric with respect to horizontal axis, whose three consecutive segments in the upper half-plane 
have lengths $a,2a$ and $a.$ Note that the homothety center (the origin) is not the midpoint of $[A_1A_4],$ rather, it divides this segment in proportion $1:2$ (starting from $A_1$). 

Assuming initially $\initialnetwork$ have lengths $a_o:=\cH^1(S_1^0)$ and $2a_o = \cH^1(S_2^0)$, let us find the function $r(\cdot)$ satisfying $\sS(t)=r(t)\initialnetwork.$ Since the triangle $A_1OA_2$ is equilateral, $h(t) = \frac{\sqrt3}{2}(a_o-a(t))$ and $\kappafi_{S_1(t)} = \frac{1}{\sqrt3a(t)}.$ Thus, the $\phi$-curvature equation $h'(t)=-\kappafi_{S_1(t)}$ is expressed as 
$$
\frac{\sqrt3}{2}a' = \frac{1}{\sqrt3 a}\quad\text{so that}\quad a(t) = \sqrt{a_o^2 - \frac{2}{3}t},\quad t\in[0,\tfrac{3a_o^2}{2}).
$$
Whence the function $r(t)=\sqrt{1-\frac{2t}{3a_o^2}}$ satisfies $\sS(t)=r(t)\initialnetwork.$

\subsection*{Two half-lines: case 1.} 
Let us check whether the network $\initialnetwork$ in Figure \ref{fig:all_shrinkers} (b) is a self-shrinker. With the notation of 
Figure \ref{fig:tarelka}, the half-lines of $\initialnetwork$ 
start from $A_1$ and $A_2$, so that $\theta_1 = \bar\theta_1= \theta_2 = 60^o.$ Thus, \eqref{dorime_umbaloiqd} is represented as 
$$
\F_1=\bar\F_1=\F_2=1,\quad \bar\F_4\F_4=1,\quad  \bar\F_3\bar\F_2=\F_4\F_3.
$$
In this case 
$$
a_1=a,\quad 
a_2=\frac{a}{\F_3},\quad
a_3=\frac{\F_4 \F_3-\F_4 +1}{\F_4\F_3}a,
$$
and
$$
\bar a_1 = \frac{a}{\bar\F_2},\quad 
\bar a_2=\frac{\F_4\F_3-\bar\F_3+1}{\F_3\F_4},\quad
\bar a_3=\frac{\F_4 + \bar\F_3-1}{\F_4\F_3},
$$
and hence, by the definition of $c_i$ in \eqref{no_triple} and \eqref{two_triple}, 
$$
\gamma_1=\gamma_2=\gamma_6 = \frac{2}{\sqrt3}\frac{\bar\F_2\F_3}{\bar\F_2\F_3+\F_3+\bar\F_2},\quad 
\gamma_3=\frac{2}{\sqrt3}\frac{\F_4\F_3^2}{\F_4\F_3-\F_4+1},
$$
and
$$
\gamma_4=\frac{2}{\sqrt3}\frac{\F_4\F_3^2}{\bar\F_4\bar\F_3-\bar\F_4+1},\quad 
\gamma_5=\frac{2}{\sqrt3}\frac{\bar\F_2\F_3\F_4}{\bar\F_4\bar\F_3-\bar\F_3+1}.
$$
From the equalities $\gamma_1=\gamma_3$ and $\gamma_3=\gamma_4$ as well as $\bar\F_4=\frac{1}{\F_4}$ we obtain
$$
\bar\F_2 = \frac{\F_3^2\F_4}{1-(1+\F_3^2)\F_4} \quad \text{and} \quad \F_3 = 1-(1-\F_3)\F_4^2.
$$
Inserting these relations in the equality $\gamma_3=\gamma_5$ we get
$$
\frac{1}{\F_3\F_4 - \F_4 + 1} = \frac{\F_3^2\F_4}{(1-\F_4-\F_3^2\F_4)(\F_4+\F_3 - \F_3\F_4)},
$$
and hence
$$
\F_4 = \frac{\F_3^3 + \F_3^2 +\F_3-1}{\F_3^3 - \F_3^2 +\F_3-1}.
$$
Inserting the expressions of $\bar\F_2,$ $\bar\F_3$ and $\F_4$ in the identity $\bar\F_2\bar\F_3 = \F_3\F_4$ gives
$$
(\F_3^3 + \F_3^2 + \F_3-1)^2  +  (\F_3^2 + 2\F_3 - 1)(\F_3^2+1)^2 = 0.
$$
After simplification (recalling that $\F_3>0$), this equation reduces to 
\begin{equation*}
\F_3^4 + 2\F_3^2+2\F_3^2 +2\F_3 -1 = 0
\end{equation*}
which admits a unique positive solution $\F_3\approx 0.33925.$ Then the sidelengths of $\bA$ are defined as
$$
a_1 = a,\quad 
a_2 \approx 2.94771a,\quad 
a_3 \approx 2.33925a,\quad 
a_4 \approx 1.60847a,\quad
a_5 \approx 2.33924a,\quad 
a_6 \approx 2.94772a.
$$
Moreover, the corresponding angles $\theta_i$ and $\bar\theta_i$ are 
$$
\theta_1 = 60^o,\quad 
\theta_2 = 60^o,\quad 
\theta_3 \approx 100.51566^o,\quad 
\theta_4 \approx  77.77741^o,
$$
$$
\bar \theta_1 = 60^o,\quad
\bar \theta_2 \approx  100.51576^o,\quad 
\bar \theta_3 \approx  77.77726^o,\quad 
\bar \theta_4 \approx  42.22259^o.
$$
In particular, $\bA$ has no symmetry. Assuming $a_1=a_o$ for the initial hexagon, let us 
look for a function $r(\cdot)$ satisfying $\sS(t) = r(t)\initialnetwork.$ Since the triangle $A_1OA_2$ is equilateral, $h(t) = \frac{\sqrt3}{2}(a_o-a(t))$ and $\kappafi_{S_1(t)} = \frac{1}{\sqrt3a(t)}.$ Thus, the $\phi$-curvature equation $h'(t)=-\kappafi_{S_1(t)}$ is expressed as 
$$
\frac{\sqrt3}{2}a' = \frac{1}{\sqrt3 a}\quad\text{so that}\quad a(t) = \sqrt{a_o^2 - \frac{2}{3}t},\quad t\in[0,\tfrac{3a_o^2}{2}).
$$

Whence the function $r(t)=\sqrt{1-\frac{2t}{3a_o^2}}$ satisfies $\sS(t)=r(t)\initialnetwork.$

\subsection*{Two half-lines: case 2.} 
Let $\initialnetwork$ be as in Figure \ref{fig:all_shrinkers} (c) so that 
with the notation of Figure \ref{fig:tarelka} (d) the half-lines of $\initialnetwork$ start from $A_1$ and $A_3.$ Then $\theta_1=\theta_3=\bar\theta_1=60^o$ so that 
\begin{equation*}
\F_1=\F_3=\bar\F_1=1,\quad \bar\F_3\bar\F_2 = \F_4\F_2. 
\end{equation*}
Then 
$$
a_1=a,\quad a_2=\F_2a,\quad a_3=\frac{a}{\F_4},\quad \bar a_1=\frac{\F_2}{\bar\F_2}a,\quad \bar a_2=\frac{\F_4\F_2-\bar\F_3 +1}{\F_4}a,\quad \bar a_3 = \frac{\bar\F_3\bar\F_4-\bar\F_3+1}{\F_3}a.
$$
Now recalling the definitions of $c_i$ in \eqref{no_triple} and \eqref{one_triple} we compute
$$
\gamma_1=\gamma_6=\frac{2}{\sqrt3}\frac{\bar\F_2}{\F_2+\bar\F_2},\quad 
\gamma_2=\gamma_3=\frac{2}{\sqrt3}\frac{\F_2\F_4}{\F_2\F_4 + 1},
$$
and
$$
\gamma_4=\frac{2}{\sqrt3}\frac{\F_2\F_4}{\bar\F_3\bar\F_4-\bar\F_4+1},\quad 
\gamma_5=\frac{2}{\sqrt3}\frac{\bar\F_2\F_4}{\F_4\F_2-\bar\F_3+1}.
$$
From the equality $\gamma_1=\gamma_2$ we get $\bar\F_2=\F_2^2\F_3$;
hence, inserting this in the equality $\gamma_4=\gamma_5$ we obtain $\bar\F_3=\frac{1}{\F_2}.$ Since $\bar\F_4=\frac{1}{\F_4},$ the system \eqref{bizning_kamonlar} is reduced to three equalities
$$
\frac{1}{\F_2\F_4 + 1} = \frac{\F_2\F_4}{\F_2\F_4 - \F_2 +1} = \frac{\F_2^2\F_4}{\F_2^2\F_4 + \F_2 - 1}.
$$
From the second equality (recalling that $\F_2>0$) we get $\F_2=1.$ Then the first equality in implies $\F_4=0,$ which contradicts the positivity of $\F_4.$ Thus, $\initialnetwork$ cannot be a self-shrinker.

\subsection*{Two half-lines: case 3.} 
Let $\initialnetwork$ be as in Figure \ref{fig:all_shrinkers} (d) 
so that with the notation of Figure \ref{fig:tarelka} the half-lines of $\initialnetwork$ start from $A_1$ and $A_4.$ Then $\theta_1=\bar\theta_1=\theta_4=\bar\theta_4=60^o$ 
so that  
$$
\F_1=\bar\F_1=\F_4=\bar\F_4=1,\quad \F_3\F_2=\bar\F_3\bar\F_2
$$
and 
$$
a_1=a_3=a,\quad \bar a_1=\bar a_3 = \frac{\F_2}{\bar\F_2}a,\quad 
a_2 = \frac{\F_2\F_3-\F_3+1}{\F_3}a,\quad 
\bar a_2 = \frac{\F_2\F_3-\bar \F_3+1}{\F_3}a.
$$
Now using the definitions of $c_i$ in \eqref{no_triple} and \eqref{one_triple} we find 
$$
\gamma_1=\gamma_6=\frac{2}{\sqrt3}\frac{\bar\F_2}{\F_2+\bar\F_2},\quad 
\gamma_2=\frac{2}{\sqrt3}\frac{\F_2\F_3}{\F_2\F_3-\F_3+1}
$$
and
$$
\gamma_3=\gamma_4=\frac{2}{\sqrt3}\frac{\F_2\bar\F_2\F_3}{\F_2+\bar\F_2},\quad 
\gamma_5=\frac{2}{\sqrt3}\frac{\bar\F_2\F_3}{\F_2\F_3-\bar\F_3+1}.
$$
From the equality $\gamma_1=\gamma_3$ we get $\F_2\F_3=\bar\F_2\bar\F_3=1.$ Thus, inserting the relations $F_3=\frac{1}{\F_2}$ and $\bar\F_3=\frac{1}{\bar\F_2}$ in \eqref{bizning_kamonlar} we deduce
$$
\frac{\bar\F_2}{\F_2+\bar\F_2} = \frac{\F_2}{2\F_2-1} = \frac{\bar\F_2^2}{2\bar\F_2\F_2-\F_2}.
$$
From the second equality it follows $\bar\F_2=\F_2$ or $\bar\F_2=\frac{\F_2}{2\F_2-1}.$ In the former case, the first equality reduces  to $\frac{1}{2\F_2}=\frac{1}{2\F_2-1},$ which is impossible. In the latter case, from the first equality we get the quadratic equation $2\F_2^2-2\F_2+1=0,$ which has no positive solutions. Therefore, $\initialnetwork$ is not a self-shrinker. 

\begin{remark}
In the Euclidean case 
there exists a unique convex self-shrinking lense-shaped network \cite{Sch:2011}. As we have seen Example \ref{ex:hexagon09o1},
 at the maximal time the hexagon $\bA$ shrinks to a point, but not self-similarly.
\end{remark}

\subsection*{Three half-lines: case 1.}
Let $\initialnetwork$ be as in Figure \ref{fig:all_shrinkers} (e) so that with the notation of Figure \ref{fig:tarelka} the half-lines of $\initialnetwork$ start from $A_1,A_2$ and $A_6.$ Then $\theta_1=\theta_2=\bar\theta_1=\bar\theta_2=\theta_4=\bar\theta_4=60^o$ 
so that $\F_2=\bar \F_2=1,$ and \eqref{dorime_umbaloiqd} is rewritten as 
\begin{equation*}
\F_4\bar \F_4=1,\quad \F_4\F_3 = \bar \F_3,\quad \bar \F_3\bar\F_4 = \F_3.
\end{equation*}
In this case, 
$$
a_1=\bar a_1= a,\quad 
a_2=  \frac{a}{\F_3}, \quad
a_3= \bar a_3 = \frac{\F_4\F_3-\F_4+1}{\F_4\F_3}a, \quad 
\bar a_2=  \frac{a}{\F_4\F_3},
$$
and hence, using the above discussions for the definition of $c_i$ in \eqref{no_triple} and \eqref{three_triple} we get 
$$
\gamma_1 = \gamma_2 = \gamma_6 = \gamma_5 = \frac{2}{\sqrt{3}}\, \frac{\F_4\F_3}{2\F_4\F_3+\F_4+1},\quad  \gamma_3=\frac{2}{\sqrt3}\,\frac{\F_4\F_3^2}{\F_4\F_3-\F_4+1},\quad 
\gamma_4=\frac{2}{\sqrt3}\,\frac{\F_4^2\F_3^2}{\F_4\F_3+\F_4-1}.
$$
Thus, \eqref{bizning_kamonlar} reduced to the following system:
\begin{equation}\label{dgzetrefs}
\frac{2}{\sqrt{3}}\, \frac{\F_4\F_3}{2\F_4\F_3+\F_4+1} = \frac{2}{\sqrt3}\,\frac{\F_4\F_3^2}{\F_4\F_3-\F_4+1} = \frac{2}{\sqrt3}\,\frac{\F_4^2\F_3^2}{\F_4\F_3+\F_4-1}.
\end{equation}
From the second equality it follows that $\F_4=1$ or $\F_4=\frac{1}{\F_3-1}.$ 
If $\F_4=1,$ inserting this in the second equality we find $\frac{1}{\F_3+1}=1,$ i.e., $\F_3=0,$ which contradicts the positivity of $\F_3.$ In case $\F_4=\frac{1}{\F_3-1},$ inserting this in the first equation in \eqref{dgzetrefs} we get $\frac{1}{\frac{2\F_3+1}{\F_3-1}+1} = \frac{\F_3}{2},$ which does not admit any positive solution. Thus, $\initialnetwork$ is not a self-shrinker.

\subsection{Three half-lines: case 2.} 
Let $\initialnetwork$ be as in Figure \ref{fig:all_shrinkers} (f) so that with the notation of Figure \ref{fig:tarelka} the half-lines of $\sS$ start from $A_1,A_2$ and $A_4.$ Then $\theta_1=\theta_2=\theta_4=\bar\theta_1=\bar\theta_4=60^o$ 
so that 
$$
\F_2=\F_4=\bar\F_4=1,\quad \F_3=\bar\F_3\bar\F_2.
$$
Whence 
$$
a_1=a_3=a,\quad a_2=\frac{a}{\bar\F_3\bar\F_2},\quad 
\bar a_1=\bar a_3 = \frac{a}{\bar \F_2},\quad \bar a_2 = \frac{\bar\F_2\bar\F_3 - \bar \F_3 + 1}{\bar \F_2\bar \F_3}a
$$
and hence, using the computations of $c_i$ in \eqref{no_triple}-\eqref{two_triple} above we deduce
$$
\gamma_1=\gamma_2=\gamma_6=\frac{2}{\sqrt3}\,\frac{\bar \F_3\bar\F_2}{\bar\F_3\bar \F_2 +\bar\F_3+1},\quad \gamma_3=\gamma_4=\frac{2}{\sqrt3}\frac{\bar\F_3\bar\F_2^2}{\bar\F_2+1},\quad \gamma_5 = \frac{2}{\sqrt3}\frac{\bar\F_3\bar\F_2^2}{\bar\F_3\bar \F_2 - \bar \F_3 +1}.
$$
Thus, \eqref{bizning_kamonlar} reduces to 
$$
\frac{2}{\sqrt3}\,\frac{\bar \F_3\bar\F_2}{\bar\F_3\bar \F_2 +\bar\F_3+1} = \frac{2}{\sqrt3}\frac{\bar\F_3\bar\F_2^2}{\bar\F_2+1} = \frac{2}{\sqrt3}\frac{\bar\F_3\bar\F_2^2}{\bar\F_3\bar \F_2 - \bar \F_3 +1}.
$$
Then from the first equality we have $\bar\F_3=\frac{1}{\bar\F_2(\bar\F_2+1)}$, and from the second equality $\bar\F_3=\frac{\bar\F_2}{\bar\F_2-1}.$ Therefore, $\bar\F_2^3 + \bar\F_2^2-\bar\F_2+1 =0,$ which has no positive roots. Thus, $\initialnetwork$ is not a self-shrinker.

\subsection*{Three half-lines: case 3.} 
Let $\initialnetwork$ be as in Figure \ref{fig:all_shrinkers} (g) so that with the notation of Figure \ref{fig:tarelka} the half-lines of $\initialnetwork$ start from $A_1,A_3$ and $A_5.$  Since the quadrangles $A_1OA_3A_2,$ $A_3OA_5A_4$ and $A_1OA_5A_6$ are rhombi with the same sidelength and one $60^o$ interior angle, $\theta_1 = \theta_3=\theta_4=\bar\theta_1=\bar\theta_2=\bar\theta_4=60^o$ and $\theta_2 = \bar\theta_2.$ 
Then 
$$
\F_1=\bar\F_1=\F_3=\bar\F_3=\F_4=\bar\F_4=1,\quad \F_2=\bar \F_2,
$$
and 
$$
a_1=a_3=\bar a_1 = \bar a_3 = a,\quad a_2=\bar a_2 = \F_2a.
$$
Thus, using \eqref{one_triple} in the definitions of $\gamma_i$ we find
$$
\gamma_1 = \gamma_6=\frac{1}{\sqrt3},\quad \gamma_2=\gamma_3=\gamma_4=\gamma_5=\frac{2}{\sqrt3}\frac{\F_2}{\F_2+1}.
$$
Therefore \eqref{bizning_kamonlar} reads as 
$$
\frac{1}{\sqrt3} = \frac{2}{\sqrt3}\frac{\F_2}{\F_2+1},
$$
which has a unique solution $\F_2=1.$ By the definition of $\F_2,$
$
\sin\theta_2 = \sin(60^o+\theta_2),
$
which has a unique (admissible) solution $\theta_2=60^o.$ Then $\bA$ is a homothetic Wulff shape of radius $a$ and the homothety center -- the origin of $\bA$ -- is located at the center. In this case, clearly, all $\gamma_i$ are equal to $1,$ so that $\initialnetwork$ is a self-shrinker. 

Let us seek the function $r(\cdot)$ satisfying $\sS(t)=r(t)\initialnetwork$ with $a_o:=\cH^1(S_1^0).$ Since $h(t)  = \frac{\sqrt3}{2}(a_o - a(t))$ and $\kappafi_{S_i(t)} = \frac{1}{\sqrt3a(t)},$ the equation $h'(t) = -\kappafi_{S_i(t)}$ is equivalent to 
$$
\frac{\sqrt3}{2} a' = \frac{1}{\sqrt3a}\quad\text{so that}\quad a(t) = \sqrt{a_o^2 - \frac{2}{3}t},\quad t\in[0,\tfrac{3a_o^2}{2}).
$$
Then $r(t) = \sqrt{1-\frac{2t}{3a_o^2}}$ satisfies $\sS(t)=r(t)\initialnetwork.$

\subsection*{Four half-lines: case 1.} 
Let $\initialnetwork$ be as in Figure \ref{fig:all_shrinkers} (h) so that with the notation of Figure \ref{fig:tarelka} the half-lines of $\initialnetwork$ start from $A_1,A_2,A_3$ and $A_4.$ Then $\theta_1=\theta_2=\theta_3=\theta_4=\bar\theta_1=\bar\theta_4=60^o$
so that 
$$
\F_1 = \bar\F_1 = \F_2=\F_3=\F_4=\bar\F_4 =1,\quad \bar\F_3 \bar \F_2 = 1.
$$
Then 
$$
a_1=a_2=a_3=a,\quad \bar a_1 = \bar a_3= \frac{a}{\bar \F_2} = \bar\F_3 a,\quad \bar a_2 = (2-\bar \F_3)a
$$
and recalling the definitions of $c_i$ in \eqref{no_triple} and \eqref{four_triple}
$$
\gamma_1=\gamma_2=\gamma_3=\gamma_4=\gamma_6=\frac{2}{\sqrt3}\frac{1}{3 + 2\bar \F_3},\quad \gamma_5=\frac{2}{\sqrt3}\frac{\bar \F_2}{2-\bar\F_3}.
$$
Therefore, \eqref{bizning_kamonlar} is equivalent to 
$$
\frac{2}{\sqrt3}\frac{1}{3 + 2\bar \F_3} =  \frac{2}{\sqrt3}\frac{\bar \F_2}{2-\bar\F_3}.
$$
This equation implies $\bar\F_3^2=-3,$ which is impossible. Thus, $\initialnetwork$ is not a self-shrinker.

\subsection*{Four half-lines: case 2.} 
Let $\initialnetwork$ be as in Figure \ref{fig:all_shrinkers} (i) so that with the notation of Figure \ref{fig:tarelka} (d) the half-lines of $\initialnetwork$ start from $A_1,A_2,A_6$ and $A_4.$ Then $\theta_1=\theta_2=\bar\theta_1=\bar\theta_2=\theta_4=\bar\theta_4=60^o$ and $\theta_3=\bar\theta_3$ so that 
$$
\F_1=\bar\F_1=\F_2 = \bar\F_2 = \F_4=\bar\F_4 = 1,\quad \bar\F_3 = \F_3.
$$
Then 
$$
a_1=\bar a_1=a_3=\bar a_3=a,\quad a_2=\bar a_2=\frac{a}{\F_3}.
$$
Thus, by the definition of $c_i$ in \eqref{no_triple} and \eqref{three_triple}
$$
\gamma_1=\gamma_6 = \gamma_2=\gamma_5 = \frac{2}{\sqrt3}\frac{\F_3}{2+2\F_3},\quad 
\gamma_3=\gamma_4 = \frac{\F_3}{\sqrt3}.
$$
Therefore, by \eqref{bizning_kamonlar}
$
\frac{2}{\sqrt3}\frac{\F_3}{2+2\F_3} = \frac{\F_3}{\sqrt3}
$
which implies $\F_3=0,$ a contradiction. Thus, $\initialnetwork$ is not a self-shrinker.

\subsection*{Four half-lines: case 3.} 
Let $\initialnetwork$ be as in Figure \ref{fig:all_shrinkers} (j). Then $\bA$ is a homothetic Wulff shape and the homothety center is located at the center of $\bA.$ With the notation of Figure \ref{fig:tarelka} (d), the half-lines of $\sS$ start from $A_1,A_6,A_3$ and $A_4.$ Then $\theta_i=\bar\theta_i=60^o,$ $a_i=\bar a_i=a$ and by \eqref{three_triple} $c_i=\frac{2}{3\sqrt3}$ for all possible $i$. Hence, all $\gamma_i$ equal to $\frac{1}{\sqrt3}$ and $\sS^0$ 
is a self-shrinker. 

Let us define $r(\cdot)$ satisfying $\sS(t)=r(t)\initialnetwork$ with $a_o:=\cH^1(S_1^0).$ Since $h(t) = \frac{\sqrt3}{2}(a_o-a(t))$ and $\kappafi_{S_i(t)} = -\frac{2}{3\sqrt3a(t)},$ the equation $h'(t) = -\kappafi_{S_i(t)}$ is equivalent to 
$$
\frac{\sqrt3}{2}a' = -\frac{2}{3\sqrt3 a}\quad\text{so that}\quad a(t) = \sqrt{a_o^2 - \frac{4}{9}t},\quad t\in[0,\tfrac{9a_o^2}{4}).
$$
Then $r(t):=\sqrt{1- \frac{4t}{9a_o^2}}$ satisfies $\sS(t) = r(t)\initialnetwork.$

\subsection*{Five half-lines} 
Let $\initialnetwork$ be as in Figure \ref{fig:all_shrinkers} (k). Then $\bA$ is a homothetic Wulff shape and the homothety center is located at the center of $\bA.$ Let $\sS(t)=r(t)\initialnetwork$ for some $r(\cdot)$ to be defined later and let $R(t)$ be the sidelength of $\bA.$ In this case all heights of segments of $\sS(t)$ from $\initialnetwork$ are equal to $h(t):=\frac{\sqrt3}{2}(R_0 - R(t))$, and by \eqref{five_triple} the $\phi$-curvatures of all segments are equal to $-\frac{2}{6\sqrt3 R(t)}.$ Thus, $\phi$-curvature flow equation is represented as 
$$
-\frac{\sqrt3}{2}R'(t) = \frac{1}{3\sqrt3R(t)},\quad \text{hence,}\quad R(t) = \sqrt{R_0^2-\frac{4}{9}t},\quad t\in [0,\tfrac{9}{4R_0^2}).
$$
Hence, the function $r(t):=\sqrt{1 - \frac{4t}{9R_0^2}}$ satisfies $\sS(t)=r(t)\initialnetwork.$

\begin{wrapfigure}[9]{l}{0.4\textwidth}
\vspace*{-2mm}
\includegraphics[width=0.38\textwidth]{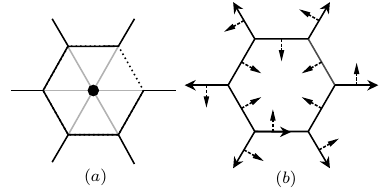}
\caption{\small}
\label{fig:pozdravlyayu}
\end{wrapfigure}
\subsection*{Six half-lines} 
Let $\initialnetwork$ be as in Figure \ref{fig:all_shrinkers} (l). Then $\bA$ is a homothetic Wulff shape and the homothety center is located at the center of $\bA.$ $\initialnetwork$ admits a locally constant CH field; one example of such a field is drawn in Figure \ref{fig:pozdravlyayu} (b). Thus, $\initialnetwork$ is critical, and therefore, it 
stays still. Notice that $\initialnetwork$ is not a local minimizer of $\ell_\phi,$ since removing one facet of the Wulff shape decreases the length of $\sS$ in every disc $D_R$ compactly containing $\bA.$

\subsection{Homothety center on $\partial \bA$}

In this short section we assume that more than one half-line of a self-shrinker $\initialnetwork$ start from the same vertex of $\bA$ or a half-line is collinear with a segment of $\bA$ having a common vertex. In this situation the homothety center is necessarily located at this vertex. In particular, the segments of $\bA$ ending at this vertex should have zero $\phi$-curvature. For instance, in Figure \ref{fig:all_shrinkers} (a) two half-lines of $\initialnetwork$ start from the same vertex $A_1$ (coinciding with the origin) of the hexagon $\bA$ of sidelength $a_o>0$ and one more half-line bisects the angle at $A_4$; let $\sS(t):=r(t)\initialnetwork$ be the family of homotheties of $\initialnetwork,$ with $r(t)$ to be defined.

\begin{figure}[htp!]
\includegraphics[width=0.8\textwidth]{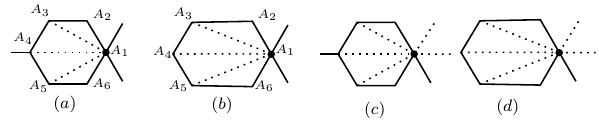}
\caption{\small}
\label{fig:masheynik09}
\end{figure}

\noindent
Repeating the same arguments of Section \ref{subsec:homothety_center_inside_the_hexagon}, we can show that:

\begin{itemize}
\item for the network $\initialnetwork$ in Figure \ref{fig:masheynik09} (a), $\bA$ is homothetic to the Wulff shape. Let $a(t)$ be the sidelength of $\bA(t)$ in $\sS(t)=\cup_iS_i(t).$ Assuming that $\p \bA(t)$ is oriented counterclockwise, so that its unit normal vector field points inside $\bA(t)$, we compute 
$$
\kappafi_{S_1(t)} = \kappafi_{S_6(t)} = 0, \quad 
\kappafi_{S_2(t)} = \kappafi_{S_5(t)} =- \frac{2}{\sqrt3a(t)},\quad 
\kappafi_{S_3(t)} = \kappafi_{S_4(t)} =- \frac{4}{\sqrt3a(t)},
$$ 
where $a(t)>0$ is the length of $\bA(t).$ Note that $A_1A_3$ is orthogonal to $A_3A_4$ so that $\cH^1([A_1A_3]) = \sqrt3a(t)$, and $[A_1Q]$ is orthogonal 
to the straight line containing $[A_2A_3]$, and therefore $\cH^1([A_1Q]) = \frac{\sqrt3a(t)}{2}.$ Now consider the heights between the corresponding segments of $\p \bA$ and $\p\bA(t)$: we have 
$$
h_1(t)=h_6(t)=0,\quad h_2(t) = h_5(t) = \frac{\sqrt3}{2}(a_o - a(t)),\quad h_3(t)=h_4(t) = \sqrt3(a_o-a(t)).
$$
Then the $\phi$-curvature equation $h_i'(t) = -\kappafi_{S_i(t)}$ is equivalent to 
$
-a'(t) = \frac{4}{3a(t)},
$
which admits the unique solution 
$$
a(t) = \sqrt{a_o^2 - \frac{8}{3}t},\quad t\in[0,\tfrac{3a_o^2}{8}).
$$
Thus, $\sS(\cdot)$ is the homothetically shrinking solution starting from $\initialnetwork,$ with $r(t) = \sqrt{1- \frac{8t}{3a_o^2}}.$

\item One checks that the network $\initialnetwork$ in Figure \ref{fig:masheynik09} (b) is not a self-shrinker. The same holds for the network in Figure \ref{fig:masheynik09} (d).

\item The network in Figure \ref{fig:masheynik09} (c) is obtained from (a) adding one or two dotted half-lines. Clearly, this does not affect to the self-similarity.
\end{itemize}

Finally, the networks in Figure \ref{fig:masheynik09} (a) and (c) are examples of simple self-shrinkers with multiple junctions.

\end{document}